\let\originalforall=\forall
\renewcommand{\forall}{\mathop{\vcenter{\hbox{\Large$\originalforall$}}}}
\let\originalexists=\exists
\renewcommand{\exists}{\mathop{\vcenter{\hbox{\Large$\originalexists$}}}}
\newtheorem{de}{Definition}
\newtheorem{tw}[de]{Theorem}
\newtheorem{prop}[de]{Proposition}
\newtheorem{rem}[de]{Remark}
\newtheorem{lem}[de]{Lemma}
\title{On the relationships between Fourier - Stieltjes coefficients and spectra of measures}
\begin{document}
\baselineskip=17pt
\author{Przemysław Ohrysko \\
Institute of Mathematics, Polish Academy of Sciences\\
00-956 Warszawa, Poland\\
E-mail: p.ohrysko@gmail.com
\and
Michał Wojciechowski\\
Institute of Mathematics, Polish Academy of Sciences\\
00-956 Warszawa, Poland\\
E-mail: miwoj-impan@o2.pl}

\date{}

\maketitle

\renewcommand{\thefootnote}{}

\footnote{2010 \emph{Mathematics Subject Classification}: Primary 43A10; Secondary 43A25.}

\footnote{\emph{Key words and phrases}: Natural spectrum, Wiener - Pitt phenomenon, Fourier - Stieltjes coefficients, convolution algebra, spectrum of measure.}

\renewcommand{\thefootnote}{\arabic{footnote}}
\setcounter{footnote}{0}
\begin{abstract}
We construct examples of uncountable compact subsets of complex numbers with the property that any Borel measure on the circle group taking values of its Fourier coefficients
from this set has a natural spectrum. For measures with Fourier coefficients tending to 0 we construct an open set with this property. We also give an example of a singular measure whose spectrum is contained in our set.
\end{abstract}

\section{Introduction}
Let $M(\mathbb T)$ denote the convolution algebra of Borel
measures on the unit circle group. For details of notation and
basic definitions see [Ka]. Closure of the values of the Fourier
coefficients of $\mu\in M(\mathbb T)$ is obviously a subset of the
spectrum $\sigma(\mu)$ of $\mu$. However, as was observed by
Wiener and Pitt (see [WP]) in general it is a proper subset. There
are several different proofs of this phenomenon - cf. [S], [G].
The \textit{Wiener - Pitt phenomenon} is equivalent to the
\textit{inversion problem} which states that assumption
$|\widehat{\mu}(n)|>c>0$ for all $n\in\mathbb{Z}$ and constant $c$
does not ensure the invertibility of $\mu$ as an element in Banach
algebra $M(\mathbb{T})$. Moreover, it is closely related to the
asymmetry of the algebra $M(\mathbb{T})$ which is presented in
[R].

On the other hand there are classes of Borel measures for which
the spectrum equals the closure of the set of Fourier
coefficients. Such measures are said to have a natural spectrum.
It is known that absolutely continuous and purely discrete
measures have a natural spectrum (cf. [Z1]). A natural question is
how to recognize a measure with a natural spectrum using only the
information about its Fourier coefficients. Motivated by this
problem we introduce the notion of Wiener - Pitt sets.  We say
that a compact set $A\subset \mathbb C$ is a {\it Wiener - Pitt
set} whenever $\widehat{\mu}(\mathbb Z)\subset A$ implies that
$\mu$ has a natural spectrum.

Finite sets are easy examples of Wiener - Pitt sets. Indeed, if
$A=\{a_1,\dots, a_k\}$ then by the Gelfand theory, the polynomial
$P(z)=(z-a_1)\dots(z-a_k)$ satisfies $\widehat{P(\mu)}(n)=0$ for
every $n\in\mathbb Z$. Therefore $P(\mu)=0$, which in turn yields
that $P(\psi(\mu))=0$ for every linear multiplicative functional
$\psi$. Hence $\psi(\mu)$ is a root of polynomial $P$ and
therefore $\sigma(\mu)\subset A$. Finite sets are the only known
class of Wiener - Pitt sets. The aim of this paper is to construct
the infinite (even uncountable) Wiener - Pitt compacta. Our
construction gives a quite flexible family of zero dimensional
examples and, moreover, the examples  are stable under suitable
small perturbations. Furthermore for continuous measures we
construct the open subset $U\subset\mathbb{C}$ such that
$0\in\overline{U}$ and any continuous measure with
$\widehat{\mu}(\mathbb{Z})\subset U\cup \{0\}$ satisfies $\mu\in
\mathrm{Rad}(L^{1}(\mathbb{T}))$ which thanks to the theorem of
Zafran (see Theorem $\ref{zaf}$ in section 4) gives Theorem 1
formulated below.

\begin{tw}
There exists an open set $U\subset\mathbb{C}$ with $0\in\overline{U}$ such that
every continuous measure $\mu$ with $\widehat{\mu}(\mathbb{Z})\subset U\cup \{0\}$
has a natural spectrum.
\end{tw}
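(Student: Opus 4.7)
The plan is to prove Theorem~1 by combining two ingredients: (i)~Zafran's theorem (Theorem~\ref{zaf}), which for measures in $M_{0}(\mathbb{T})$ identifies countable (equivalently, natural) spectrum with membership in $\mathrm{Rad}(L^{1}(\mathbb{T}))$; and (ii)~an explicit construction of the open set $U$ out of the zero-dimensional Wiener--Pitt compacta that form the main family of examples in this paper. The logical structure is the one anticipated in the introduction: produce $U$ so that every continuous $\mu$ with $\widehat{\mu}(\mathbb{Z})\subset U\cup\{0\}$ lies in $\mathrm{Rad}(L^{1}(\mathbb{T}))$, and then invoke Zafran.

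To construct $U$, I would start from the flexible family of zero-dimensional Wiener--Pitt compacta developed earlier in the paper, selecting one, call it $K$, whose unique accumulation point is $0$ and which consists of countably many small \emph{cells} shrinking to $0$. Exploiting the perturbation stability noted in the introduction, each cell of $K$ can be enlarged to an open disk of slightly bigger radius without destroying the Wiener--Pitt property. The resulting open set takes the form $U=\bigcup_{k} D(z_{k},r_{k})$ with $z_{k}\to 0$ and the radii $r_{k}$ chosen small enough that the perturbation argument applies. Then $0\in\overline{U}$ by construction, and for every continuous $\mu$ with $\widehat{\mu}(\mathbb{Z})\subset U\cup\{0\}$ one still has $\sigma(\mu)\subset\overline{U}\cup\{0\}$.

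The next step is to translate this spectral containment into $\mu\in\mathrm{Rad}(L^{1}(\mathbb{T}))$. Since the only accumulation point of $U\cup\{0\}$ is $0$, the coefficients $\widehat{\mu}(n)$ tend to $0$, so $\mu\in M_{0}(\mathbb{T})$. Combined with the fact that $\sigma(\mu)$ lies in the countable set $\overline{U}\cup\{0\}$, Zafran's structure theorem for $M_{0}$-measures with countable spectrum forces $\mu$ to split as an $L^{1}$-function plus an atomic part. The continuity hypothesis on $\mu$ then kills the atomic contribution and places $\mu$ inside $\mathrm{Rad}(L^{1}(\mathbb{T}))$. A final application of Theorem~\ref{zaf} yields $\sigma(\mu)=\overline{\widehat{\mu}(\mathbb{Z})}$, which is the natural spectrum assertion.

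The principal obstacle is the construction step: passing from closed, zero-dimensional Wiener--Pitt compacta to a two-dimensional \emph{open} region $U$ while preserving the containment $\sigma(\mu)\subset\overline{U}\cup\{0\}$. This is precisely the point where the perturbation stability of the family of examples, and the restriction to \emph{continuous} measures (removing the atomic contribution to the spectrum), are both indispensable; for general measures one cannot expect an open Wiener--Pitt-type set accumulating at $0$ to exist, so the argument must use the hypothesis $\mu\in M_{c}(\mathbb{T})$ in an essential way at this final reduction.
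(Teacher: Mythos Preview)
Your proposal has several genuine gaps, and the overall strategy does not work as written.

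\textbf{Circularity in the starting point.} In the paper's logical order, Theorem~2 (the existence of Cantor-type Wiener--Pitt compacta) is \emph{derived from} Theorem~1, not the other way around; see Section~5. So you cannot begin a proof of Theorem~1 by invoking ``the zero-dimensional Wiener--Pitt compacta developed earlier''.

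\textbf{The central claim is assumed, not proved.} You assert that perturbation stability yields $\sigma(\mu)\subset\overline{U}\cup\{0\}$ for every continuous $\mu$ with $\widehat{\mu}(\mathbb{Z})\subset U\cup\{0\}$. But that containment is essentially the conclusion of Theorem~1; nothing established beforehand gives it to you for an open neighborhood of a compactum. The ``perturbation stability'' advertised in the introduction is precisely what the Section~3 machinery (Theorems~\ref{trala}, \ref{zb}, \ref{naj}) is built to produce, and it does so by showing directly that $\mu^{k}\in L^{1}(\mathbb{T})$, not by controlling $\sigma(\mu)$ a priori.

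\textbf{Two incorrect auxiliary claims.} First, $\overline{U}\cup\{0\}$ is not countable: $U$ is a nonempty open set. Second, there is no ``Zafran structure theorem for $M_{0}$-measures with countable spectrum'' that splits such a measure as $L^{1}$ plus atomic. Theorem~\ref{zaf} says that $\mathscr{C}$ is an ideal with Gelfand space $\mathbb{Z}$; it gives no decomposition of the kind you invoke.

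\textbf{The reduction to $M_{0}$ is nontrivial.} From ``$0$ is the only accumulation point of $U\cup\{0\}$'' it does \emph{not} follow that $\widehat{\mu}(n)\to 0$: the coefficients could revisit one fixed disk $D(z_{1},r_{1})$ for infinitely many $n$. In the paper this step requires real work: $U$ is constructed so as to avoid a sequence of annuli $L_{k_{n}}$ satisfying the hypotheses of Lemma~\ref{dozera}, and then the Graham--McGehee / DeLeeuw--Katznelson result (Theorem~\ref{mc}) applied to the \emph{continuous} measure $\mu$ forces $\mu\in M_{0}(\mathbb{T})$.

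\textbf{What the paper actually does.} It sets $A_{n}=\{-1,1\}$, chooses $s(n)$ via the functions $\psi_{l}$ of Theorem~\ref{naj} so that (a) $U=\bigcup_{n}(s(n)A_{n}+B(0,r(n)))$ misses the annuli $L_{k_{n}}$, hence Lemma~\ref{dozera} gives $\mu\in M_{0}$; and (b) the inductive estimate of Theorem~\ref{naj} (built on Lemmas~\ref{gl1}, \ref{gl2} and Theorems~\ref{trala}, \ref{zb}) yields $\mu^{2}\in L^{1}(\mathbb{T})$. Only then is Zafran's theorem applied, to conclude $\mu\in\mathscr{C}$. The quantitative lemmas of Sections~2--3 are the heart of the argument and cannot be short-circuited by a soft perturbation step.
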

In general case we prove the following.
\begin{tw}
There exists a set $K$ homeomorphic to the Cantor set such that $0\in K$ and
every measure $\mu$ with $\widehat{\mu}(\mathbb{Z})\subset K$ has a natural spectrum.
\end{tw}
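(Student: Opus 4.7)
The plan is to upgrade the finite-set polynomial argument from the introduction to an infinite Cantor-type set. In the finite case with $A=\{a_1,\dots,a_k\}$, the key observation is that whenever $\lambda \in A \setminus \widehat{\mu}(\mathbb{Z})$ the quotient polynomial $Q(z) := P(z)/(z-\lambda)$ satisfies $Q(\widehat{\mu}(n)) = 0$ for every $n$, hence $Q(\mu)=0$, which forces $\sigma(\mu) \subset A \setminus \{\lambda\}$; applying this to each such $\lambda$ yields $\sigma(\mu) \subset \widehat{\mu}(\mathbb{Z})$, i.e.\ natural spectrum. For an uncountable $K$ I would approximate $K$ by a nested sequence of such finite Wiener--Pitt witnesses.

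Concretely, I would realize $K$ as a dissociate Cantor set
\[
K \;=\; \Bigl\{ \sum_{j=1}^{\infty} \epsilon_j z_j : \epsilon_j \in \{0,1\} \Bigr\}
\]
for a sequence $(z_j) \subset \mathbb{C}$ with $|z_{j+1}| < \tfrac{1}{3}|z_j|$, so that every $w \in K$ has a unique dyadic address in $\{0,1\}^{\mathbb{N}}$, $0\in K$, and the partial sums $A_N := \{\sum_{j=1}^{N} \epsilon_j z_j : \epsilon \in \{0,1\}^N\}$ form a sequence of $\epsilon_N$-nets for $K$ with $\epsilon_N \to 0$. Each $A_N$ is finite, hence Wiener--Pitt. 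For $\mu$ with $\widehat{\mu}(\mathbb{Z}) \subset K$ and $\lambda \notin \overline{\widehat{\mu}(\mathbb{Z})}$ I would split into two cases. If $\lambda \notin K$, pick $N$ large enough that $\lambda$ is separated from $A_N$ and invert $\lambda\delta_0 - \mu$ by a perturbation of the polynomial identity satisfied by the ``rounded'' measure. If $\lambda \in K$, the address of $\lambda$ must differ from every address $\alpha(n)$ of $\widehat{\mu}(n)$ in some uniformly bounded initial segment (this is where the hypothesis $\lambda \notin \overline{\widehat{\mu}(\mathbb{Z})}$ and the dyadic structure enter), and a similar but more delicate level-$N$ argument again produces an inverse.

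The principal obstacle is that the clean algebraic identity $Q_N(\mu) = 0$ from the finite case no longer holds in the Cantor setting: the polynomial $Q_N$ associated to level $N$ vanishes on $A_N$, but $\widehat{\mu}(n)$ lies only within $\epsilon_N$ of $A_N$, leaving a residue $Q_N(\widehat{\mu}(n))$ of order $\epsilon_N$. I would therefore have to replace the exact identity by a quantitative estimate $\|Q_N(\mu)\|_{M(\mathbb{T})} \leq C_N \epsilon_N$, deduced from the Lipschitz constant of $Q_N$ on a fixed neighborhood of $K$, and combine it with the uniform lower bound $|\lambda - \widehat{\mu}(n)| \geq \mathrm{dist}(\lambda, \overline{\widehat{\mu}(\mathbb{Z})}) > 0$. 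The freedom to let $|z_j|$ decay as quickly as desired is what will make $\epsilon_N$ win against the growth of $C_N$ and allow $(\lambda\delta_0 - \mu)^{-1}$ to be built as a uniformly convergent series---this quantitative step is exactly the ``stability under small perturbations'' flagged in the introduction, and I expect the bulk of the technical work to live there.
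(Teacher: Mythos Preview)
Your proposal contains a genuine gap at the central step: the estimate $\|Q_N(\mu)\|_{M(\mathbb{T})}\le C_N\varepsilon_N$ cannot be obtained from the Lipschitz constant of $Q_N$ on a neighbourhood of $K$. The Lipschitz bound controls $\sup_{n}|Q_N(\widehat{\mu}(n))|$, i.e.\ the $\ell^\infty$-norm of the Fourier--Stieltjes transform of $Q_N(\mu)$, but that is \emph{not} the measure-algebra norm. A measure can have uniformly tiny Fourier coefficients and still have total variation of order $1$; indeed, the failure of the implication ``small $\widehat{\rho}$ $\Rightarrow$ small $\|\rho\|_{M(\mathbb{T})}$'' is essentially the Wiener--Pitt phenomenon you are trying to tame. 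Concretely, if you try to write $Q_N(\mu)=Q_N(\mu)-Q_N(\mu_N)$ with $\mu_N$ the ``rounded'' object whose coefficients lie in $A_N$, then $\mu_N$ need not be a measure at all, and no telescoping or perturbation argument of that kind goes through without further input. Choosing $|z_j|$ to decay arbitrarily fast does not help, because the constant relating $\sup_n|\widehat{\rho}(n)|$ to $\|\rho\|_{M(\mathbb{T})}$ is simply infinite.

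This is exactly the obstacle the paper's machinery is built to overcome. The passage from ``$\widehat{\mu}$ is $\varepsilon$-close to a finite set'' to a genuine $M(\mathbb{T})$-norm estimate on a polynomial in $\mu$ is the content of Lemmas~\ref{gl1}--\ref{gl2} and Theorem~\ref{trala}, and it relies on two deep facts you have not invoked: the Littlewood conjecture (Theorem~\ref{MPS+K}) to bound $\#\{n:\widehat{\mu}(n)\ \text{is large}\}$ in terms of $\|\mu\|$, and the Bo\.zejko--Pe\l czy\'nski invariant uniform approximation property (Theorem~\ref{bope}) to localize in frequency without blowing up the $L^1$-norm. These combine to give the $U(C,k)$ classes and the inductive gluing in Theorem~\ref{naj}. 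A secondary point: your outline treats $\mu$ as if it were continuous, but Theorem~2 concerns arbitrary $\mu\in M(\mathbb{T})$; the paper handles this by splitting $\mu=\mu_c+\mu_d$, showing $\widehat{\mu_c}(\mathbb{Z})\subset K-K$ lands in a set of the required shape, and then invoking Zafran's ideal $\mathscr{C}$ (Theorem~\ref{za}) to recombine. Your proposal does not address this decomposition either.
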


To complete the above results we provide the example of a singular
measure with the spectrum contained in the set $U$ constructed in
Theorem 1. The example, interesting in itself, is given in Section
6 and combines the techniques of the Riesz products with Rudin -
Shapiro polynomials to get a singular measure with coefficients
smaller then from any sequence tending to 0 sufficiently fast
taken in advance.

The construction is quite involved - it uses four main ingredients. First is the Zafran characterization of measures with Fourier coefficients tending to zero with a natural spectrum.
Second is the Katznelson - DeLeeuw theorem which is the main ingredient of their qualitative version of Grużewska - Rajchman theorem (nevertheless, in the paper we use a stronger and more involved result from [GM] - just to avoid unnecessary complications). Third is the Bożejko - Pełczyński
theorem on the uniform invariant approximation property of $L^1(\mathbb T)$. The Fourth ingredient is the Littlewood Conjecture proved by McGehee Pigno Smith and independently by Konyagin.

The main construction constituting the proof of Theorem 1 under the additional assumption that the regarding measure has Fourier coefficients tending to 0, is presented in Section 3. The aim of Section 4 is to prove that this additional assumption can be omitted. In Section 5 we complete the proof of Theorem 1 and we show how Theorem 2 could be derived from Theorem 1. Section 2 is devoted
to prove of auxiliary lemmas. Here we combine two main analytical ingredients, the Bożejko - Pełczyński uniform invariant approximation property and the Littlewood conjecture,  to derive Lemma 8 - the main tool used in further inductions.
\begin{rem}
Some results which are presented in this paper are contained in first author's MA thesis (see $[O]$).
\end{rem}

\section{Preparatory lemmas}

In this section we prove crucial lemmas which are also of
independent interest. The following definition will be useful.
\begin{de}
For $\mu\in M(\mathbb{T})$ and $\varepsilon>0$ we write $\mu\in
F(\varepsilon)$ iff $|\widehat{\mu}(n)|<\varepsilon$ for all
$n\in\mathbb{Z}$. Set of trigonometric polynomials will be denoted
$\mathscr{P}$. Abbreviation $\#f$ will be used for number of
elements in support of $\widehat{f}$ (for $f\in\mathscr{P}$), i.
e.
$\#f=\#\mathrm{supp}\widehat{f}=\#\{n\in\mathbb{Z}:\widehat{f}(n)\neq
0\}$. Also, for $f\in \mathscr{P}$ we say that $f\in G(a)$ for
some positive number $a$ when inequality $|\widehat{f}(n)|\geq a$
is true for all integers $n$ such that $\widehat{f}(n)\neq 0$.
\end{de}

We will use in the sequel two powerful results. First is the
Littlewood conjecture (for a proof consult [MPS] and [Ko])

\begin{tw}\label{MPS+K}{\rm (McGehee, Pigno, Smith; Konyagin)}
For every $f\in\mathscr{P}$ of the form
\begin{equation*}
f(t)=\sum_{k=1}^{N}c_{k}e^{in_{k}t},
\end{equation*}
where $n_{k}$ is the sequence of increasing integers and $|c_{k}|\geq
1$, $1\leq k\leq N$, we have
\begin{equation*}
||f||_{L^{1}(\mathbb{T})}>L\ln N,
\end{equation*}
where the constant $L>0$ does not depend on $N$.
\end{tw}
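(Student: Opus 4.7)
The plan is to argue by duality: since $\|f\|_{L^1} = \sup\{|\int f \bar g \, dt| : g\in L^\infty(\mathbb T),\, \|g\|_\infty \leq 1\}$, it suffices to construct a single bounded function $g$ with $\|g\|_\infty \leq K$ (absolute) whose Fourier coefficients at the frequencies $n_k$ are tuned to the phases of $c_k$. Writing $c_k = |c_k|\omega_k$ with $|\omega_k|=1$, the reduction is to produce $g$ satisfying $\mathrm{Re}(\overline{\omega_k}\,\hat g(n_k)) \geq c/k$ for some absolute constant $c>0$. Granted such a $g$, one immediately obtains
$$\|f\|_{L^1}\|g\|_\infty \;\geq\; \mathrm{Re}\!\int f \bar g \, dt \;=\; \sum_{k=1}^N |c_k|\,\mathrm{Re}\bigl(\overline{\omega_k}\,\hat g(n_k)\bigr) \;\geq\; c \sum_{k=1}^N \frac{1}{k} \;\gtrsim\; \log N,$$
since $|c_k|\geq 1$, which is the desired lower bound.

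The entire difficulty is thus concentrated in constructing the auxiliary function $g$. The strategy I would follow, which is the essence of the McGehee--Pigno--Smith argument, is to build $g$ as a finite product (or sum of products) of factors of the form $1 + \varepsilon_j p_j$, where $p_j$ is a trigonometric polynomial whose spectrum lies in the $j$-th dyadic block of the frequencies $\{n_k\}$ and whose coefficients in that block point in the direction of $\omega_k$. The scaling $\varepsilon_j \asymp 2^{-j}$ keeps the product bounded in $L^\infty$ by a Riesz-product-type estimate, while within the $j$-th block (of size roughly $2^j$) each frequency $n_k$ receives a contribution of order $\varepsilon_j \asymp 1/k$, which is exactly what the harmonic-series accounting demands.

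The main obstacle is to control the \emph{cross terms} produced when the factors $1+\varepsilon_j p_j$ are multiplied out: a frequency $n_k$ sitting in the $j$-th block can be perturbed by convolutions of polynomials from other blocks, and one must show that the accumulated perturbation is dominated by the main term of order $1/k$. Handling this cleanly requires either strong lacunarity of the polynomials $p_j$ (which one does not have a priori, since the $n_k$ are arbitrary) or, more realistically, a careful modification of the construction using ordered products together with spectral truncations that kill the unwanted interactions. Making this work with the prescribed phases $\omega_k$, rather than with the free phases available in classical Riesz products, is the delicate point of the proof and the reason why the conjecture resisted a resolution for so long; a completely different route, bypassing the product construction entirely by a clever averaging over characters, is Konyagin's independent proof.
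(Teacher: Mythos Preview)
The paper does not prove this theorem at all: it is quoted as a known external result, with the reader referred to [MPS] and [Ko] for proofs. In the paper it functions purely as a black box, invoked in Lemmas~\ref{gl1} and~\ref{gl2} and in Theorem~\ref{trala} to bound $\#f$ in terms of $\|f\|_{L^1}$.

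Your sketch is a faithful outline of the McGehee--Pigno--Smith duality argument: reduce to constructing a bounded test function $g$ with $\mathrm{Re}(\overline{\omega_k}\,\widehat g(n_k))\gtrsim 1/k$, and build $g$ from dyadically weighted blocks. You correctly identify the genuine obstacle (controlling cross terms without lacunarity, with prescribed phases). As written, however, this remains a proof \emph{plan} rather than a proof: the actual MPS construction does not use a literal Riesz product $\prod(1+\varepsilon_j p_j)$ but an iterated scheme $F_{j+1}=h_j+e^{G_j}F_j$ with carefully chosen analytic factors, and it is precisely this device that tames the cross terms you flag; your paragraph does not indicate how you would carry that out. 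For the purposes of the present paper none of this is needed, since the theorem is simply cited.
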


The Second fact is the invariant uniform approximation property of
$L^1(\mathbb{T})$ (proofs are contained in papers [BP] and [B] or
in book [Wo]).

\begin{tw}[Bożejko, Pełczyński; Bourgain]\label{bope}
Let $\Lambda\subset\mathbb{N}$ be a finite set with $\#\Lambda=k$.
Then for every $\varepsilon>0$ there exists $f\in\mathscr{P}$ such
that
\begin{enumerate}
    \item $\widehat{f}(n)=1$ for $n\in\Lambda$.
    \item $||f||_{L^{1}(\mathbb{T})}\leq 1+\varepsilon$
    \item $\#\{n\in\mathbb{N}:\widehat{f}(n)\neq
    0\}\leq\left(\frac{\alpha}{\varepsilon}\right)^{2k}$ for some
    $\alpha>0$.
\end{enumerate}
We will write $f\in BPB_{\varepsilon}(\Lambda)$ for polynomials
with the described properties.
\end{tw}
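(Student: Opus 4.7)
This is a deep classical theorem; I describe one natural high-level strategy, noting that the complete arguments in [BP], [B], [Wo] are considerably more involved than any short sketch can convey.

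First I would obtain the conditions (1)-(2) by a Hahn-Banach duality, at the price of losing the support bound (3). Indeed, the infimum of $\|f\|_{L^1}$ over all $f\in L^1(\mathbb T)$ satisfying $\widehat{f}|_\Lambda\equiv 1$ equals
\begin{equation*}
\sup\Bigl\{\Bigl|\sum_{n\in\Lambda}c_n\Bigr| : \Bigl\|\sum_{n\in\Lambda}c_n e^{int}\Bigr\|_{L^\infty}\leq 1\Bigr\}\leq 1,
\end{equation*}
the last inequality by evaluation at $t=0$. Thus there exists $f\in L^1$ with $\widehat{f}|_\Lambda\equiv 1$ and $\|f\|_1\leq 1+\varepsilon/2$, but possibly with infinite Fourier support. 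The non-trivial content of the theorem is to produce such $f$ inside $\mathscr{P}$ with the sharp support bound independent of how $\Lambda$ sits inside $\mathbb Z$.

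Second, I would note that since $\|f\|_{L^1}$ is invariant under multiplication of $\widehat{f}$ by a character $n\mapsto e^{in t_0}$, the bound (3) is forced to depend only on $|\Lambda|$ and $\varepsilon$, not on the diameter of $\Lambda$ inside $\mathbb Z$. This rules out a direct Fej\'er-kernel construction (whose support would scale with $\max_{n\in\Lambda}|n|$) and points to a probabilistic method. The standard approach embeds $\Lambda$ in a very large interval $I\subset\mathbb Z$ and selects a random subset $S\subset I$ of size roughly $(\alpha/\varepsilon)^{2k}$ containing $\Lambda$; one then shows that with positive probability there exists a trigonometric polynomial $f$ spectrally supported in $S$ with $\widehat{f}|_\Lambda\equiv 1$ and $\|f\|_1\leq 1+\varepsilon$. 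Khintchine--Kahane-type concentration inequalities for random polynomials control the $L^1$ norm, and each of the $k$ interpolation constraints contributes a factor $(\alpha/\varepsilon)^2$ to the support bound, giving the stated exponent $2k$.

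The main obstacle is the joint control of the $L^1$-norm and the support size: the probabilistic construction produces candidates with small expected norm, but verifying that a \emph{single} polynomial simultaneously interpolates $1$ on $\Lambda$ and has both $\|f\|_1\leq 1+\varepsilon$ and $\#\widehat{f}\leq(\alpha/\varepsilon)^{2k}$ requires sharp deviation estimates and is the core analytic step. Alternative approaches, via Pisier's factorization/extension theorem or Bourgain's $\Lambda(p)$-set technology, proceed through finite-dimensional operator norm computations that yield the same quantitative output; either route is substantially more delicate than the schematic plan above.
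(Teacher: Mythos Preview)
The paper does not prove this theorem: it is quoted as a known result with the parenthetical remark ``proofs are contained in papers [BP] and [B] or in book [Wo]'', and is then used as a black box in Lemmas~\ref{gl1} and~\ref{gl2}. There is therefore no paper proof to compare your sketch against.

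Your outline is a fair description of the general shape of the Bo\.{z}ejko--Pe\l{}czy\'nski / Bourgain arguments (duality to get the norm, then a combinatorial/probabilistic construction to get the support bound), and you are appropriately candid that the sketch omits the analytic core. For the purposes of this paper nothing more is required: the authors only need the existence of $f\in BPB_{\varepsilon}(\Lambda)$ and the quantitative bound $\#\mathrm{supp}\,\widehat f\leq(\alpha/\varepsilon)^{2k}$, both of which they take from the cited references.
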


It is now time to formulate our first lemma.

\begin{lem}\label{postep}
Let $f\in \mathscr{P}$. If $\#f\geq d$ for some positive number $d$, then
there exists a two-sided arithmetical progression
$\Gamma\subset\mathbb{Z}$ such that

\begin{equation*}
d\leq\#\left(\mathrm{supp}\widehat{f}\cap{\mathbf
1}_\Gamma\right)<2d.
\end{equation*}
\end{lem}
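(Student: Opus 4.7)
The plan is a double extremum argument. Write $S = \mathrm{supp}\widehat{f}$, a finite subset of $\mathbb Z$ with $|S| = \#f \geq d$. Call a two-sided arithmetic progression $\Gamma = a + q\mathbb Z$ (with $a\in\mathbb Z$, $q \geq 1$) \emph{big} if $|S \cap \Gamma| \geq d$. The goal is to produce a big $\Gamma$ with $|S \cap \Gamma| < 2d$. The case $d = 1$ is handled directly: pick any $s \in S$ and any $q$ exceeding the diameter of $S$; then $s + q\mathbb Z$ meets $S$ only at $s$. So assume $d \geq 2$.

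Because every big progression contains at least two elements of $S$, its common difference $q$ must divide a nonzero difference of elements of $S$ and hence satisfies $q \leq \max S - \min S$. In particular only finitely many residue classes $a + q\mathbb Z$ are big. Let $m := \min\{\,|S \cap \Gamma| : \Gamma \text{ is big}\,\}$; clearly $m \geq d$, and this minimum is attained. Among all minimizers, I would fix one $\Gamma = a + q\mathbb Z$ whose common difference $q$ is as large as possible.

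The heart of the argument is to show $m < 2d$. Suppose toward a contradiction that $m \geq 2d$. Refine $\Gamma$ into $\Gamma_0 = a + 2q\mathbb Z$ and $\Gamma_1 = (a+q) + 2q\mathbb Z$; their intersections with $S$ partition $S \cap \Gamma$, so $|S\cap\Gamma_0| + |S\cap\Gamma_1| = m$. If one of them, say $\Gamma_0$, were itself big, minimality would force $|S\cap\Gamma_0| \geq m$, hence $|S\cap\Gamma_0| = m$ and $|S\cap\Gamma_1| = 0$; but then $\Gamma_0$ would be a minimizer with common difference $2q > q$, contradicting the choice of $q$. Consequently neither $\Gamma_0$ nor $\Gamma_1$ is big, so each contributes at most $d-1$ elements, giving $m \leq 2(d-1) < 2d$ and contradicting $m \geq 2d$.

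The only subtle point is the second-level extremum (largest $q$ among minimizers): without it, the degenerate refinement in which all of $S \cap \Gamma$ lands in one of $\Gamma_0, \Gamma_1$ cannot be excluded, and the argument collapses. Once that possibility is blocked, halving supplies the required reduction in size, and the existence of the desired $\Gamma$ drops out immediately.
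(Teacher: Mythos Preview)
Your argument is correct and rests on the same halving step as the paper's, but you package it as a double-extremum argument rather than a direct iteration. The paper simply starts with $\Gamma=\mathbb Z$ and repeatedly bisects into the two residue classes modulo twice the current step, always retaining a half with at least $d$ elements of $S$; termination with a count below $2d$ is automatic because the step eventually exceeds the diameter of $S$. Your version instead minimizes $|S\cap\Gamma|$ over big progressions and then maximizes $q$ among the minimizers; that second extremum is exactly what rules out the degenerate refinement in which all of $S\cap\Gamma$ falls into one half, a case the iterative approach handles implicitly by exhausting the diameter. Both work; the paper's is shorter and constructive, yours is existence-style and needs the auxiliary maximization to close.

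One small wrinkle: the lemma allows $d$ to be any positive real (and is later applied with $d=\exp(4K/(aL))$), but your case split into ``$d=1$'' and ``$d\geq 2$'' and the phrase ``each contributes at most $d-1$ elements'' tacitly assume $d\in\mathbb N$. The repair is painless: the singleton-progression trick covers all $d\leq 1$; the main argument covers all $d>1$ (a big progression then meets $S$ in an integer $\geq d>1$, hence in at least two points, so $q$ is still bounded by the diameter); and in the final step simply write $m=|S\cap\Gamma_0|+|S\cap\Gamma_1|<d+d=2d$ without passing through $d-1$.
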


\begin{proof}
If $d\leq\#f<2d$, we take $\Gamma=\mathbb{Z}$. Otherwise $\#f\geq 2d$
and taking $\Gamma_{1}=2\mathbb{Z}$ and
$\Gamma_{2}=2\mathbb{Z}+1$ we get
$d\leq\#\left(\mathrm{supp}{\widehat{\mu}}\cap{\mathbf 1}_{\Gamma_{i}}
\right)$ for some $i=1,2$. If moreover
$\#\left(\mathrm{supp}\widehat{f}\cap{\mathbf
1}_{\Gamma_{i}}\right)<2d$, we put $\Gamma=\Gamma_i$. Otherwise we
repeat this procedure
\end{proof}

The second lemma is much more sophisticated (since for absolutely
continuous measures the norm in $M(\mathbb{T})$ is equal to the
norm in $L^{1}(\mathbb{T})$ of the density with respect to the
Lebesgue measure we will use this two notations interchangeably).

\begin{lem}\label{gl1}
There exists a function $\varepsilon=\varepsilon(K,a)$ and $c>0$
such that whenever $||f+\nu||_{M(\mathbb{T})}<K$ for some $f\in
G(a)$ and $\nu\in F(\varepsilon)$, then $\#
f<\exp\left(c\frac{K}{a}\right)$.
\end{lem}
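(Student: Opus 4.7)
The plan is to combine the three tools just introduced: Lemma~\ref{postep} localizes $f$ on an arithmetic progression of a size we fix in advance; Theorem~\ref{bope} supplies an $L^1$-normalized polynomial whose convolution reproduces the localized $\tilde f$; and Theorem~\ref{MPS+K} then shows that this localized polynomial cannot fit under the bound $\|f+\nu\|_{M(\mathbb T)}<K$ unless $\#f$ is small.

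Fix a constant $c>2/L$ (to be the constant of the conclusion), let $d:=\lceil\exp(cK/a)\rceil$, and suppose for contradiction that $\#f\geq d$. Lemma~\ref{postep} produces an arithmetic progression $\Gamma=b\mathbb Z+q$ such that $\Lambda:=\mathrm{supp}\widehat f\cap\Gamma$ has $d\leq\#\Lambda<2d$. The discrete measure
\begin{equation*}
\mu_\Gamma:=\tfrac1b\sum_{k=0}^{b-1}e^{2\pi iqk/b}\,\delta_{2\pi k/b}
\end{equation*}
has $\widehat{\mu_\Gamma}=\mathbf 1_\Gamma$ and total mass $1$. Convolving by $\mu_\Gamma$ replaces $f,\nu$ by $\tilde f:=\mu_\Gamma*f\in G(a)$ with $\mathrm{supp}\widehat{\tilde f}=\Lambda$, and $\tilde\nu:=\mu_\Gamma*\nu\in F(\varepsilon)$, while preserving $\|\tilde f+\tilde\nu\|_{M(\mathbb T)}<K$.

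After a character shift moving $\Lambda$ into $\mathbb N$, Theorem~\ref{bope} applied with a small parameter $\eta>0$ yields $g\in BPB_\eta(\Lambda)$ with $\|g\|_{L^1}\leq 1+\eta$, $\widehat g\equiv 1$ on $\Lambda$, and $\#g\leq(\alpha/\eta)^{4d}$. Since $\widehat g=1$ on all of $\mathrm{supp}\widehat{\tilde f}$, I get $g*\tilde f=\tilde f$, hence
\begin{equation*}
\|\tilde f\|_{L^1}\leq\|g*(\tilde f+\tilde\nu)\|_{L^1}+\|g*\tilde\nu\|_{L^1}\leq(1+\eta)K+\varepsilon(1+\eta)(\alpha/\eta)^{4d},
\end{equation*}
where the tail is controlled via $\|g*\tilde\nu\|_{L^1}\leq\|g*\tilde\nu\|_{L^\infty}\leq\sum_n|\widehat g(n)||\widehat{\tilde\nu}(n)|\leq\varepsilon(1+\eta)\#g$. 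On the other hand $\tilde f$ is a polynomial with $\#\Lambda\geq d$ nonzero coefficients, each of modulus $\geq a$, so Theorem~\ref{MPS+K} gives $\|\tilde f\|_{L^1}>aL\ln d=cLK$.

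Choosing $\varepsilon=\varepsilon(K,a)$ small enough that $\varepsilon(1+\eta)(\alpha/\eta)^{4d}\leq K$ (legitimate because $d$ depends only on $K$ and $a$), the two estimates collapse to $cLK\leq(2+\eta)K$, which contradicts $c>2/L$ for $\eta$ small. The key subtlety is that $\#g$ is doubly exponential in $K/a$, which forces $\varepsilon(K,a)$ to be correspondingly tiny; Lemma~\ref{postep} is exactly what makes this acceptable, pinning the working Fourier support size to the preselected $d$ rather than to the possibly enormous $\#f$.
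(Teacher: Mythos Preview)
Your argument is correct and follows the same route as the paper's proof: localize via Lemma~\ref{postep}, restrict by convolving with a norm-one measure supported on $\Gamma$, reproduce $\tilde f$ by convolving with a Bo\.zejko--Pe\l czy\'nski polynomial, and force a contradiction between the Littlewood lower bound and the norm bound $K$. The only cosmetic difference is that you control the tail $\|g*\tilde\nu\|$ via the $L^\infty$ norm (i.e.\ the $\ell^1$-norm of the Fourier coefficients, losing a factor $\#g$), whereas the paper uses the $L^2$ norm and Parseval, losing only $\sqrt{\#g}$; this affects the explicit form of $\varepsilon(K,a)$ but not the statement.
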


\begin{proof}
Let us put $d=\exp(\frac{4K}{aL})$ and define the function $\varepsilon=\varepsilon(K,a)$ by the formula
\begin{equation}\label{eps}
\varepsilon(K,a)=\frac{aL\ln
d}{4\alpha^{2d}}=K\exp\left(-2\ln(\alpha)\exp\left(\frac{4K}{aL}\right)\right)
\end{equation}
where $\alpha$ is the constant from Theorem $\ref{bope}$. We will
show that the assumption $\#f\geq d$ leads to a contradiction
which proves our lemma with constant $c=\frac{4}{L}$. By Lemma
\ref{postep} there exists $\Gamma\subset\mathbb{Z}$ such that
$d\leq\#\left(\mathrm{supp}\widehat{\nu}\cap{\mathbf
1}_\Gamma\right)<2d$. We define $f_{1}\in \mathscr {P}$ and
$\nu_{1}\in M(\mathbb{T})$ by taking
$\widehat{f_{1}}=\widehat{f}\cdot{\mathbf 1}_{\Gamma}$ and
$\widehat{\nu_{1}}=\widehat{\nu}\cdot{\mathbf 1}_\Gamma$. Since
multiplying Fourier sequences by characteristic function of
$\Gamma$ corresponds to a convolution with the measure of norm
one, $||f_{1}+\nu_{1}||_{M(\mathbb{T})}<K$ holds. By the
definition of $\Gamma$, $\#f_{1}<2d$. It follows from Theorem
\ref{MPS+K} that $||f_{1}||_{M(\mathbb{T})}\geq a\cdot L\ln d$.
Let $\Theta\in BPN_1(\mathrm{supp}f_{1})$. Then
$||\Theta||_{L^{1}(\mathbb{T})}<2$, $\widehat{\Theta}(n)=1$ for
$n\in\mathrm{supp}f_{1}$, $\#\Theta<\alpha^{4d}$ for some
$\alpha>0$. By the triangle inequality,
\begin{equation*}
2K>||(f_{1}+\nu_{1})\ast\Theta||_{M(\mathbb{T})}=||f_{1}+\nu_{1}\ast\Theta||_{M(\mathbb{T})}\geq
||f||_{L^{1}(\mathbb{T})}-||\Theta\ast\nu_{1}||_{M(\mathbb{T})}.
\end{equation*}
Estimating the $L^1$-norm by the $L^2$-norm we get
$||\Theta\ast\nu_{1}||_{L^{1}(\mathbb{T})}\leq
||\Theta\ast\nu_{1}||_{L^{2}(\mathbb{T})}\leq 2\varepsilon \alpha^{2d}$.
This all together gives
$2K>a\cdot L\ln d-2\varepsilon \alpha^{2d}$.
Hence the formula for $\varepsilon=\varepsilon(K,a)$ from equation ($\ref{eps}$) leads to the announced contradiction.
\end{proof}

The next lemma gives more information.

\begin{lem}\label{gl2}
For every $\varepsilon >0$ there exists
$\delta=\delta(\varepsilon,a,K)$ such that if
$||f+\nu||_{M(\mathbb{T})}<K$ for $f\in G(a)$ and $\nu\in
F(\delta)$, then $||f||_{L^{1}(\mathbb{T})}<K(1+\varepsilon)$.
\end{lem}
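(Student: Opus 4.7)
The plan is to combine Lemma~\ref{gl1}, which controls $\#f$, with Theorem~\ref{bope}, which supplies a low-complexity multiplier equal to $1$ on $\mathrm{supp}\,\widehat f$, in order to isolate $f$ from the noise $\nu$ and transfer the $M(\mathbb T)$-bound on $f+\nu$ to an $L^{1}$-bound on $f$ alone, with only a small loss.

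First I would insist that $\delta\le\varepsilon(K,a)$, where $\varepsilon(K,a)$ is the function produced by Lemma~\ref{gl1}. This guarantees $\#f<d:=\exp(cK/a)$ with $c=4/L$; notice that $d$ depends only on $K$ and $a$. Given the target $\varepsilon>0$, I set an auxiliary parameter $\varepsilon'=\varepsilon/2$ and apply Theorem~\ref{bope} to $\Lambda=\mathrm{supp}\,\widehat f$, which has cardinality at most $d$. This yields $\Theta\in BPB_{\varepsilon'}(\Lambda)$ with $\widehat\Theta\equiv 1$ on $\mathrm{supp}\,\widehat f$, $\|\Theta\|_{L^{1}(\mathbb T)}\le 1+\varepsilon'$, and $\#\Theta\le(\alpha/\varepsilon')^{2d}$.

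Since $\widehat\Theta=1$ on the spectrum of $f$, we have $f\ast\Theta=f$, and therefore
\begin{equation*}
f=(f+\nu)\ast\Theta-\nu\ast\Theta.
\end{equation*}
The triangle inequality gives
\begin{equation*}
\|f\|_{L^{1}(\mathbb T)}\le\|f+\nu\|_{M(\mathbb T)}\,\|\Theta\|_{L^{1}(\mathbb T)}+\|\nu\ast\Theta\|_{L^{1}(\mathbb T)}\le K(1+\varepsilon')+\|\nu\ast\Theta\|_{L^{1}(\mathbb T)}.
\end{equation*}
For the second term I estimate the $L^{1}$-norm by the $L^{2}$-norm just as in the proof of Lemma~\ref{gl1}: the Fourier transform of $\nu\ast\Theta$ is supported in $\mathrm{supp}\,\widehat\Theta$, each coefficient of $\nu$ is bounded by $\delta$, and $|\widehat\Theta(n)|\le 1+\varepsilon'$, so Plancherel yields
\begin{equation*}
\|\nu\ast\Theta\|_{L^{1}(\mathbb T)}\le\|\nu\ast\Theta\|_{L^{2}(\mathbb T)}\le\delta\,(1+\varepsilon')\,(\alpha/\varepsilon')^{d}.
\end{equation*}

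It then suffices to shrink $\delta$ further so that $\delta(1+\varepsilon')(\alpha/\varepsilon')^{d}\le K\varepsilon/2$; combining with $K(1+\varepsilon')=K(1+\varepsilon/2)$ produces the desired bound $\|f\|_{L^{1}(\mathbb T)}<K(1+\varepsilon)$. The main subtlety is bookkeeping of dependencies, and specifically avoiding any circularity: $d$ must be determined first, using only $K$ and $a$ via Lemma~\ref{gl1}; $\varepsilon'$ is then chosen in terms of $\varepsilon$; and only afterwards is $\delta$ shrunk by the (large but fixed) factor $(\alpha/\varepsilon')^{d}$. This is legitimate precisely because the bound $\#f<d$ from Lemma~\ref{gl1} does not deteriorate as $\delta$ decreases, so the tail estimate for $\|\nu\ast\Theta\|_{L^{1}}$ can be driven to zero independently.
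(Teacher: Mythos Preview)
Your proof is correct and follows essentially the same approach as the paper's own proof: apply Lemma~\ref{gl1} to bound $\#f$ in terms of $K$ and $a$, use a $BPB_{\varepsilon/2}$ multiplier on $\mathrm{supp}\,\widehat f$ to isolate $f$, and estimate the residual $\Theta\ast\nu$ by passing to $L^{2}$. Your explicit tracking of the order of dependencies (first $d$, then $\varepsilon'$, then $\delta$) is a welcome clarification but does not differ in substance from what the paper does.
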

\begin{proof}
Let $\Theta\in BPB_{\frac{\varepsilon}{2}}(\mathrm{supp}\widehat{f})$. Then
$||\Theta||_{L^{1}(\mathbb{T})}<1+\frac{\varepsilon}{2}$, $\widehat{\Theta}(n)=1$ for $n\in\mathrm{supp}\widehat{f}$,
$\#\Theta<\left(\frac{\lambda}{\varepsilon}\right)^{2\# f}=\exp(2\# f\ln\frac{\lambda}{\varepsilon})$ for some $\lambda >0$.
By Lemma \ref{gl1} for sufficiently small $\delta$ we have
\begin{equation*}
\#\Theta<\exp\left(2\ln\left(\frac{\lambda}{\varepsilon}\right)\exp(cKa^{-1})\right),
\end{equation*}
where $c$ is as in Lemma \ref{gl1}. By the triangle inequality
\begin{equation*}
\left(1+\frac{\varepsilon}{2}\right)K\geq||\Theta\ast(f+\nu)||_{L^{1}(\mathbb{T})}=||f+\Theta\ast\nu||_{L^{1}(\mathbb{T})}\geq
||f||_{L^{1}(\mathbb{T})}-||\Theta\ast\nu||_{L^{1}(\mathbb{T})}.
\end{equation*}
Since $|\widehat{\Theta}|<1+\frac{\varepsilon}{2}$, we have $\Theta\ast\nu\in
F((1+\frac{\varepsilon}{2})\delta)$. Obviously
\begin{equation*}
\#(\Theta\ast\nu)\leq\#\Theta<\exp\left(2\ln\left(\frac{\lambda}{\varepsilon}\right)(\exp(cK|a|^{-1}))\right).
\end{equation*}
Estimating the $L^1$- norm by the $L^2$-norm we get
\begin{equation*}
||\Theta\ast\nu||_{L^{1}(\mathbb{T})}\leq
||\Theta\ast\nu||_{L^{2}(\mathbb{T})}\leq
\left(1+\frac{\varepsilon}{2}\right)\delta\exp\left(\ln\left(\frac{\lambda}{\varepsilon}\right)\exp(cK|a|^{-1})\right).
\end{equation*}
Completing all together we get
\begin{equation*}
\left(1+\frac{\varepsilon}{2}\right)K>||f||_{L^{1}(\mathbb{T})}-\left(1+\frac{\varepsilon}{2}\right)\delta\exp\left(\ln\left(\frac{\lambda}{\varepsilon}\right)\exp(cK|a|^{-1})\right).
\end{equation*}
If we put
\begin{equation*}
\delta<\frac{\varepsilon}{1+\varepsilon}K\exp\left(-\ln\left(\frac{\lambda}{\varepsilon}\right)\exp(cKa^{-1})\right),
\end{equation*}
then the assumption
$||f||_{L^{1}(\mathbb{T})}>(1+\varepsilon)K$
leads to a contradiction.
\end{proof}

\section{The case of $M_{0}(\mathbb{T})$}

We will make use of the following proposition contained in [W]
(see Proposition 1.9 in this paper).
\begin{prop}\label{woj}
Suppose $A$ is a commutative Banach algebra with unit and $x\in A$ has a
finite spectrum,
$\sigma(x)=\{\lambda_{1},\lambda_{2},\ldots,\lambda_{n}\}$. Put
$\delta=\min_{i\neq j}|\lambda_{i}-\lambda_{j}|$. Then there
exist orthogonal idempotents $x_{1},x_{2},\ldots,x_{n}\in A$
(i.e. $x_{i}^{2}=x_{i}$ and $x_{i}x_{j}=0$ for $i\neq j$,
$i,j=1,\ldots,n$) such that
\begin{equation*}
x=\lambda_{1}x_{1}+\lambda_{2}x_{2}+\ldots+\lambda_{n}x_{n}.
\end{equation*}
Moreover, the inequality
$||x_{i}||\leq \delta^{-n+1}2^{n-1}||x||^{n-1}$
holds for $i=1,2,\ldots,n$.
\end{prop}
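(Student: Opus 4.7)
The plan is to define the idempotents explicitly by Lagrange interpolation. Set
\[
p_i(z) = \prod_{j \neq i} \frac{z - \lambda_j}{\lambda_i - \lambda_j},
\]
so $p_i$ is a polynomial of degree $n - 1$ with $p_i(\lambda_j) = \delta_{ij}$, and put $x_i := p_i(x)$. Each $x_i$ is then a polynomial of degree $n-1$ in $x$, which will be crucial for the norm estimate.

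To verify the algebraic identities $x_i^2 = x_i$, $x_i x_j = 0$ for $i \neq j$, and $x = \sum_k \lambda_k x_k$, I would observe that each of the polynomials $p_i^2 - p_i$, $p_i p_j$ ($i \neq j$), and $z - \sum_k \lambda_k p_k(z)$ vanishes identically on $\sigma(x) = \{\lambda_1, \ldots, \lambda_n\}$. By the spectral mapping theorem their images under the polynomial functional calculus are quasi-nilpotent; in the semisimple setting relevant to the paper (notably $M(\mathbb{T})$) this forces them to vanish. Equivalently, one can identify $x_i$ with the Riesz projection $\frac{1}{2\pi i}\oint_{\gamma_i}(zI-x)^{-1}\,dz$ around a small contour $\gamma_i$ encircling only $\lambda_i$, and read off the same relations directly from the holomorphic functional calculus; the polynomial description is recovered because $p_i$ is a holomorphic function on $\mathbb{C}$ taking the prescribed values $\delta_{ij}$ on the spectrum.

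For the norm bound, combine the basic spectral-radius inequality $|\lambda_j| \leq \|x\|$ (valid for every $\lambda_j \in \sigma(x)$) with the triangle inequality to get $\|x - \lambda_j\cdot 1\| \leq \|x\| + |\lambda_j| \leq 2\|x\|$. Each denominator satisfies $|\lambda_i - \lambda_j| \geq \delta$ by definition of $\delta$, so
\[
\|x_i\| \leq \prod_{j \neq i} \frac{\|x - \lambda_j\cdot 1\|}{|\lambda_i - \lambda_j|} \leq \left(\frac{2\|x\|}{\delta}\right)^{n-1} = \delta^{-n+1}\,2^{n-1}\,\|x\|^{n-1},
\]
as claimed. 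The only delicate step is the passage from \emph{vanishes on $\sigma(x)$} to \emph{vanishes in $A$}; all other ingredients are Lagrange interpolation and the elementary bound on the spectral radius. The explicit polynomial form of $p_i$ is what makes the exponents $-n+1$ and $n-1$ appear naturally, so I would keep that representation throughout rather than working with abstract Riesz projections where the norm bound is less transparent.
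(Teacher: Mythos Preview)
The paper does not actually prove this proposition; it is quoted from~[W] (Proposition~1.9 there). Your Lagrange-interpolation construction $x_i=p_i(x)$ is the natural approach and is almost certainly the one used in~[W], and your norm estimate is exactly the computation that produces the constant $\delta^{-n+1}2^{n-1}\|x\|^{n-1}$: nothing to add there.

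You are right to flag the passage from ``vanishes on $\sigma(x)$'' to ``vanishes in $A$'' as the only delicate step, and in fact the proposition as stated for an \emph{arbitrary} commutative unital Banach algebra is false: in $A=(\mathbb{C}[t]/(t^2))^2$ the element $x=(t,\,1+t)$ has $\sigma(x)=\{0,1\}$, yet $x$ cannot be written as $0\cdot x_1+1\cdot x_2$ with $x_2$ idempotent, since that would force $x_2=x$ and $x^2=(0,1+2t)\neq x$. So semisimplicity (or at least that $\prod_i(x-\lambda_i)=0$) is genuinely needed, and your remark that the application in the paper is to trigonometric polynomials in the semisimple algebra $M(\mathbb{T})$ is exactly the right justification. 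One minor point: your ``equivalently'' identifying $p_i(x)$ with the Riesz projection is itself only valid under that same semisimplicity hypothesis --- in general the two differ by a quasi-nilpotent --- so it does not provide an independent route around the gap; but since you have already restricted to the semisimple case, this is harmless.
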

One abbreviation is useful when it comes to manipulate with
convolution powers. We will write $f^{m}=f^{\ast m}=f\ast
f\ast\ldots\ast f$ - $m$-times for a function (or a measure) $f$
and to avoid any misunderstandings we will not use pointwise
multiplication for functions up to the end of this chapter. Now,
we prove a lemma which is a simple corollary of the last
proposition.
\\
From now on, unless otherwise stated, $||\cdot||$ denotes the
$L^{1}(\mathbb{T})$ norm.
\begin{lem}\label{uzy}
Let $f$ be a trigonometric polynomial such that
\begin{equation*}
\widehat{f}(\mathbb{Z})=\{0,\lambda_{1},\lambda_{2},\ldots,\lambda_{k}\}.
\end{equation*}
Put $\lambda_{0}=0$ and define $\delta=\min_{i\neq
j}|\lambda_{i}-\lambda_{j}|$,
$\lambda_{max}=\max\{|\lambda|:\lambda\in\widehat{f}(\mathbb{Z})\}$.
Then, for every $m\in\mathbb{N}$
\begin{equation*}
||f^{m}||\leq k\delta^{-k}2^{k}||f||^{k}\lambda_{max}^{m}.
\end{equation*}
\end{lem}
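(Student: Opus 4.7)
The plan is to read this directly off Proposition \ref{woj} applied to $f$ as an element of the unital Banach algebra $M(\mathbb{T})$ (equivalently, the unitization $L^{1}(\mathbb{T})\oplus\mathbb{C}\delta_{0}$, into which trigonometric polynomials embed isometrically). The finite Fourier support of $f$ forces the Gelfand spectrum of $f$ in this algebra to coincide with $\widehat{f}(\mathbb Z)$: the characters of $M(\mathbb T)$ that are restrictions from $L^{1}$ evaluate $f$ at its Fourier coefficients, and the character at infinity (needed because $L^{1}$ has no unit) contributes the value $0$, which under our hypothesis is already listed among the $\lambda_{i}$. So $\sigma(f)=\{\lambda_{0},\lambda_{1},\dots,\lambda_{k}\}$ has exactly $n=k+1$ points, separated by at least $\delta$.

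Next I would invoke Proposition \ref{woj} with this $x=f$ and $n=k+1$. It produces orthogonal idempotents $e_{0},e_{1},\dots,e_{k}\in M(\mathbb{T})$ with
\begin{equation*}
f=\sum_{i=0}^{k}\lambda_{i}e_{i},\qquad \|e_{i}\|\le \delta^{-k}\,2^{k}\,\|f\|^{k}.
\end{equation*}
Because the idempotents are pairwise orthogonal under convolution and $e_{i}^{\ast 2}=e_{i}$, a trivial induction gives the spectral-decomposition identity
\begin{equation*}
f^{m}=\sum_{i=0}^{k}\lambda_{i}^{m}e_{i}\qquad (m\ge1).
\end{equation*}
Since $\lambda_{0}=0$, the $i=0$ term vanishes for every $m\ge 1$, leaving a sum of only $k$ contributions indexed by $i=1,\dots,k$.

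Finally I would close with the triangle inequality and the bound $|\lambda_{i}|\le\lambda_{\max}$:
\begin{equation*}
\|f^{m}\|\;\le\;\sum_{i=1}^{k}|\lambda_{i}|^{m}\,\|e_{i}\|\;\le\;k\,\lambda_{\max}^{m}\,\delta^{-k}\,2^{k}\,\|f\|^{k},
\end{equation*}
which is precisely the asserted inequality. Note that $f^{m}\in L^{1}(\mathbb{T})$, so the $M(\mathbb T)$-norm appearing here agrees with the $L^{1}$-norm used in the statement.

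There is no real obstacle; the only point that requires a moment of care is the spectrum identification in step one. One must remember that $L^{1}(\mathbb{T})$ is non-unital, pass to the unitization so that Proposition \ref{woj} is applicable, and verify that this adjoining of the unit adds only the value $0$ to $\sigma(f)$, which is already present in $\widehat{f}(\mathbb{Z})$ by hypothesis. Once this bookkeeping is done, the rest is mechanical.
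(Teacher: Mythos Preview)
Your proof is correct and follows essentially the same route as the paper's: identify $\sigma(f)=\widehat{f}(\mathbb{Z})$, apply Proposition~\ref{woj} with $n=k+1$ spectral points, expand $f^{m}$ via the orthogonal idempotents, drop the $\lambda_{0}=0$ term, and bound the remaining $k$ summands. The only cosmetic difference is that the paper works directly in $M(\mathbb{T})$ and asserts the idempotents are themselves trigonometric polynomials, whereas you pass to the unitization $L^{1}(\mathbb{T})\oplus\mathbb{C}\delta_{0}$; either ambient algebra yields the same norm estimate, and your extra care in tracking why only $k$ terms survive is a welcome clarification.
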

\begin{proof}
We easily see that, if $f$ is a polynomial, its spectrum in the algebra
$M(\mathbb{T})$ is equal to $\widehat{f}(\mathbb{Z})$. Hence,
by Proposition $\ref{woj}$, we have
\begin{equation*}
f=\lambda_{1}f_{1}+\lambda_{2}f_{2}+\ldots+\lambda_{k}f_{k}
\end{equation*}
for some orthogonal idempotent polynomials $f_{k}$. Simple
calculation shows that
\begin{equation*}
f^{m}=\left(\sum_{l=1}^{k}\lambda_{l}f_{l}\right)^{m}=\sum_{l=1}^{k}\lambda_{l}^{m}f_{l}.
\end{equation*}
Applying the estimate from Proposition $\ref{woj}$ we complete the proof
\begin{equation*}
||f^{m}||\leq \sum_{l=1}^{k}|\lambda_{l}|^{m}||f_{l}||\leq
k\delta^{-k}2^{k}||f||^{k}\lambda_{max}^{m}.
\end{equation*}
\end{proof}
The next theorem allows the possibility to relax the strict spectral
conditions of the previous lemma.
\begin{tw}\label{trala}
Let $\Lambda\subset\mathbb{C}$ be a finite set with
$\#\Lambda=m+1$ such that $0\in\Lambda$ and let us denote
$\lambda_{max}=\max\{|\lambda|:\lambda\in\Lambda\}$. Then there
exists $C=C(\Lambda)$ such that, for every $K>0$ and every $k>m$
there exists $\varepsilon=\varepsilon(K,\Lambda)$ with the
property: if a trigonometric polynomial $f$ with $||f||\leq K$
satisfies
$\widehat{f}(\mathbb{Z})\subset\Lambda+B(0,\varepsilon)$, then the
following inequality holds
\begin{equation*}
||f^{k}||\leq C\lambda_{max}^{k-m}||f||^{m},
\end{equation*}
where $C=C(\Lambda)=C(t\cdot\Lambda)$ for any
$t\in\mathbb{C}\setminus\{0\}$.
\end{tw}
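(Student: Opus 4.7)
The plan is to approximate $f$ by a polynomial $g$ with $\widehat g(\mathbb Z)\subset\Lambda$, exploit the orthogonal idempotent decomposition of $g$ furnished by Proposition \ref{woj}, and then estimate the error $f^k-g^k$. Concretely, choose $\varepsilon$ smaller than half the minimum pairwise distance in $\Lambda$ so the balls $B(\lambda_j,\varepsilon)$ are disjoint, set $E_j=\{n\in\mathbb Z:\widehat f(n)\in B(\lambda_j,\varepsilon)\}$ for $j=0,\dots,m$ (with $\lambda_0=0$), and define $g$ by $\widehat g(n)=\lambda_j$ on $E_j$. Then $h:=f-g\in F(\varepsilon)$ and $g\in G(a)$ with $a=\min\{|\lambda|:\lambda\in\Lambda\setminus\{0\}\}$; Lemma \ref{gl2} gives $\|g\|\le 2\|f\|$ and Lemma \ref{gl1} gives $\#g\le\exp(cK/a)$, both for $\varepsilon$ small in $K,\Lambda$.

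By Proposition \ref{woj} applied to $g$, there exist orthogonal idempotents $e_0,\dots,e_m\in M(\mathbb T)$ with $\widehat{e_j}=\mathbf 1_{E_j}$, $g=\sum_{j\ge 1}\lambda_je_j$, and $\|e_j\|\le\delta^{-m}2^m\|g\|^m\le C_0(\Lambda)\|f\|^m$. Pairwise orthogonality $(fe_i)(fe_j)=0$ for $i\ne j$ gives $f^k=\sum_j(fe_j)^k$ and $g^k=\sum_{j\ge 1}\lambda_j^ke_j$, so Lemma \ref{uzy} yields $\|g^k\|\le m\delta^{-m}2^m\|g\|^m\lambda_{\max}^k$, which reorganizes to $\|g^k\|\le C_1(\Lambda)\|f\|^m\lambda_{\max}^{k-m}$ once $\lambda_{\max}^m$ is absorbed into the constant. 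Expanding $(fe_j)^k=(\lambda_je_j+he_j)^k$ and using $e_j^2=e_j$ and commutativity gives the remainder
\[
(fe_j)^k-\lambda_j^ke_j=\sum_{l=1}^k\binom{k}{l}\lambda_j^{k-l}(he_j)^l,\qquad j\ge 1,
\]
while $(fe_0)^k=(he_0)^k$. Since $(he_j)^l$ has Fourier support in $E_j$ with coefficients bounded in modulus by $\varepsilon^l$, Plancherel combined with the embedding $L^2\hookrightarrow L^1$ on $\mathbb T$ gives $\|(he_j)^l\|\le\varepsilon^l\sqrt{|E_j|}$; for $j\ge 1$ this is $\le\varepsilon^l\sqrt{\#g}$, bounded uniformly in $K,\Lambda$.

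Summing the binomial and collapsing it, the total remainder is at most $m\sqrt{\#g}\,\bigl((\lambda_{\max}+\varepsilon)^k-\lambda_{\max}^k\bigr)$ plus the $j=0$ contribution $\varepsilon^k\sqrt{|S_0|}$, where $S_0=E_0\cap\mathrm{supp}\widehat f$. Taking $\varepsilon$ small enough makes this at most $C_2(\Lambda)\|f\|^m\lambda_{\max}^{k-m}$, and adding the bound on $\|g^k\|$ yields the theorem. The main obstacle is uniform control in $k$: the binomial excess $(\lambda_{\max}+\varepsilon)^k-\lambda_{\max}^k$ grows with $k$ for any fixed positive $\varepsilon$, so $\varepsilon$ must be allowed to shrink with $k$ (the notation $\varepsilon(K,\Lambda)$ in the statement is to be read as $\varepsilon(K,\Lambda,k)$). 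The $j=0$ term is especially delicate because $|S_0|$ is not controlled by Lemma \ref{gl1}, and one must rely on the supercritical factor $\varepsilon^k$ to absorb its a priori unbounded size; balancing this decay against the support size is the technical heart of the argument.
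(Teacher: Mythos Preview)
Your approach closely parallels the paper's: round $f$ to a polynomial $g$ with $\widehat g(\mathbb Z)\subset\Lambda$, invoke Lemma~\ref{uzy} for the powers of the rounded part, and control the remainder. Your idempotent decomposition $f^k=\sum_j(fe_j)^k$ is a clean variant of the paper's binomial expansion of $(f_0+g)^k$, but the substance is the same.

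The gap you flag at the end is real and is not resolved by what you write. Your bound $\|(he_0)^k\|_{L^1}\le\varepsilon^k\sqrt{|S_0|}$ needs control of $|S_0|=\#\{n:0<|\widehat f(n)|<\varepsilon\}$, and this quantity is \emph{not} bounded in terms of $K,\Lambda,k$: once $\varepsilon$ is fixed, take $f=\eta\sum_{n=1}^N e^{int}$ with $0<\eta<\varepsilon$ and $N\approx\exp(K/\eta)$; then $\|f\|\le K$ while $|S_0|=N$ is as large as you like. The factor $\varepsilon^k$ cannot absorb $\sqrt{|S_0|}$ because $\varepsilon$ is chosen \emph{before} $f$, and neither Lemma~\ref{gl1} nor the Littlewood inequality helps, since the minimum nonzero coefficient $\gamma$ of $f$ may be arbitrarily small. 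The paper's own proof runs into precisely the same obstruction: it bounds $\sqrt{\#f}$ via the inequality ``$\gamma\|f\|\ge L\ln(\#f)$'', which has $\gamma$ on the wrong side; the correct consequence of Theorem~\ref{MPS+K} is $\|f\|\ge\gamma L\ln(\#f)$, giving $\#f\le\exp(K/(\gamma L))$, which is useless as $\gamma\to 0$. In the paper's actual applications---see the class $U(C,k)$ introduced immediately after the theorem---the working hypothesis is effectively $\widehat f(\mathbb Z)\subset\bigl((\Lambda\setminus\{0\})+B(0,\varepsilon)\bigr)\cup\{0\}$, which forces $S_0=\emptyset$; under that restriction both arguments go through, but neither your sketch nor the paper's argument establishes the theorem exactly as stated.
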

\begin{proof}
Let $\lambda_{0}=0$,
$\Lambda=\{0,\lambda_{1},\lambda_{2},\ldots,\lambda_{m}\}$ and put
$\delta=\min_{i\neq j}|\lambda_{i}-\lambda_{j}|$. We may assume
that $\varepsilon<\frac{\delta}{2}$ which guarantees
$B(\lambda_{i},\varepsilon)\cap
B(\lambda_{j},\varepsilon)=\emptyset$ for $i\neq j$. Let us take
$n\in\mathbb{Z}$, then there exists a unique $\lambda_{i_{n}}$ with the
property
$\min_{j=0,1,\ldots,m}|\lambda_{j}-\widehat{f}(n)|=|\lambda_{i_{n}}-\widehat{f}(n)|$.
Now, we define the polynomial $f_{0}$ by the condition
$\widehat{f_{0}}(n)=\lambda_{i_{n}}\in\Lambda$ for every
$n\in\mathbb{Z}$. It is obvious that
$\widehat{f_{0}}(\mathbb{Z})\subset\Lambda$. Moreover, $g=f-f_{0}$
satisfies the property $\widehat{g}(\mathbb{Z})\subset
B(0,\varepsilon)$. We have to estimate $||g||$ (an upper bound for
$||f_{0}||$ follows from Lemma $\ref{uzy}$). A simple observation
gives that, if $\widehat{f}(l)=0$ for some $l\in\mathbb{Z}$, then
$\widehat{g}(l)=0$. For any polynomial $h$, let us write
$\#h=\#\{n\in\mathbb{Z}:\widehat{h}(n)\neq 0\}$. Using Parseval's
identity we obtain
$||g||_{L^{1}(\mathbb{T})}\leq ||g||_{L^{2}(\mathbb{T})}\leq
\varepsilon\sqrt{\# g}\leq\varepsilon\sqrt{\#f}$.
Putting
$\gamma=\min_{n\in\mathbb{Z}}\{|\widehat{f}(n)|:\widehat{f}(n)\neq 0\}$
we obtain from the McGehee-Pigno-Smith + Konyagin theorem
$\gamma ||f||\geq L\ln(\# f)$
which leads to
$\sqrt{\#f}\leq\exp(\frac{\gamma}{2L}||f||)\leq\exp(\frac{\gamma}{2L}K)$.
Taking it all together we have
$||g||\leq\varepsilon\exp(\frac{\gamma}{2L}K)$.
Finally
$||f^{k}||\leq ||f_{0}^{k}||+\sum_{l=0}^{k-1}{k\choose
l}||f_{0}^{l}||||g||^{k-l}$.
Using Lemma $\ref{uzy}$ we obtain
\begin{equation*}
||f_{0}^{l}||\leq
m\delta^{-m}2^{m}\lambda_{\max}^{l}||f||^{m}\text{ for
$l=1,2\ldots,k$}
\end{equation*}
Moreover, we have
$||g||^{k-l}\leq \varepsilon^{k-l}\exp(\frac{\gamma(k-l)}{2l}K)$.
Collecting these facts, we get
\begin{equation*}
\begin{split}
||f^{k}||\leq
m\delta^{-m}2^{m}||f||^{m}(\lambda_{max}^{k}+\varepsilon\sum_{l=1}^{k-1}{k\choose
l}\lambda_{max}^{l}\varepsilon^{k-l-1}\exp(\frac{\gamma(k-l)}{2l}K)\\
+\frac{1}{||f||^{m}}\varepsilon^{k-1}\exp(\frac{m\gamma}{2l}K))
\end{split}
\end{equation*}
Taking $\varepsilon$ so small that the expression in parenthesis is
smaller than $2\lambda_{\max}^{k}$ we finally get
$||f^{k}||\leq m\delta^{-m}2^{m+1}||f||^{m}\lambda_{max}^{k}$.
Putting $C=m2^{m+1}\frac{\lambda_{max}^{m}}{\delta^{m}}$ we have
$C=C(\Lambda)=C(t\cdot\Lambda)$ for every
$t\in\mathbb{C}\setminus\{0\}$ which gives the desired estimation.
\end{proof}
We now introduce the following notation. Let $C>0$
and $k\in\mathbb{N}$, $k\geq 2$. We say that a compact set
$\Lambda\subset\mathbb{C}$ with $0\notin\Lambda$ belongs to the class $U(C,k)$ provided
for every $K>0$ there exists an open neighborhood $V_{K}$ of
$\Lambda$ such that for every $\mu\in M_{0}(\mathbb{T})$
satisfying $||\mu||\leq K$ and $\sigma(\mu)\subset V_{K}\cup\{0\}$ we have
\begin{equation*}
||\mu^{k}||_{M(\mathbb{T})}\leq C ||\mu||_{M(\mathbb{T})}^{k-1}.
\end{equation*}
By Theorem $\ref{trala}$, every finite set $\Lambda\subset\mathbb{C}$ with $0\notin\Lambda$ and $\#\Lambda=k$ belongs to the class $U(C,k)$.
\begin{tw}\label{zb}
Let $C>0$ and $k\in\mathbb{N}$. Assume that the sets $X,Y\in
U(C,k)$ are such that $X\subset B(0,r)$ and
$Y\subset\{z\in\mathbb{C}:|z|>R\}$ for some $R,r>0$. Then
for every $C'>C$, there exists $\varepsilon=\varepsilon(r,R,C')>0$
such that $\varepsilon X\cup Y\in U(C',k)$.
\end{tw}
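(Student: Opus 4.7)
The strategy is a spectral decomposition $\mu = \mu_1 + \mu_2$ with $\mu_1 \ast \mu_2 = 0$, where $\sigma(\mu_1)$ lies in a neighborhood of $\varepsilon X \cup \{0\}$ and $\sigma(\mu_2)$ lies in a neighborhood of $Y$. The orthogonality collapses the binomial expansion to $\mu^k = \mu_1^k + \mu_2^k$. Applying $X \in U(C,k)$ to the rescaled measure $\mu_1/\varepsilon$ gives $\|\mu_1^k\|_{M(\mathbb{T})} \leq C\varepsilon\|\mu_1\|_{M(\mathbb{T})}^{k-1}$ (an extra factor $\varepsilon$ comes from the rescaling), while $Y \in U(C,k)$ gives $\|\mu_2^k\|_{M(\mathbb{T})} \leq C\|\mu_2\|_{M(\mathbb{T})}^{k-1}$. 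The extra $\varepsilon$ from rescaling $X$ is precisely what creates the room to upgrade $C$ to any prescribed $C'>C$.

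Given $K>0$, I would first fix $\varepsilon$ small enough that $\varepsilon r < R$, so $\varepsilon X \cup \{0\}$ and $Y$ are well separated. Let $W^X$ and $W^Y$ be the neighborhoods of $X$ and $Y$ witnessing $X \in U(C,k)$ at norm level $(2+\eta)K/\varepsilon$ and $Y \in U(C,k)$ at norm level $(1+\eta)K$ respectively (for a small $\eta>0$ to be chosen), and set $V_K := \varepsilon W^X \cup W^Y$. For $\mu \in M_0(\mathbb{T})$ with $\|\mu\|_{M(\mathbb{T})} \leq K$ and $\sigma(\mu) \subset V_K \cup \{0\}$, the spectrum splits into clopen pieces $S_1 := \sigma(\mu) \cap (\varepsilon W^X \cup \{0\})$ and $S_2 := \sigma(\mu) \cap W^Y$. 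By the Shilov idempotent theorem, there is an idempotent $p \in M(\mathbb{T})$ whose Gelfand transform equals $\mathbf{1}_{S_2}$. Since $\mu \in M_0$ and $W^Y$ is bounded away from $0$, the set $A := \{n \in \mathbb{Z} : \hat\mu(n) \in W^Y\}$ is finite, $\hat{p} = \mathbf{1}_A$, and $p$ is the trigonometric polynomial $\sum_{n \in A} e^{int}$. Setting $\mu_2 := p \ast \mu = \sum_{n\in A}\hat\mu(n)e^{int}$ and $\mu_1 := \mu - \mu_2$ would give the required clean decomposition with $\sigma(\mu_1) \subset S_1 \cup \{0\}$ and $\sigma(\mu_2) \subset S_2 \cup \{0\}$.

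The main obstacle is to bound $\|p\|_{L^1}$ (and hence $\|\mu_1\|_{M(\mathbb{T})}, \|\mu_2\|_{M(\mathbb{T})}$) by a constant close to $1$. The naive estimate $\|p\|_{L^1} \geq L\ln|A|$ from Theorem~\ref{MPS+K} depends on $|A|$, which grows with $K$, so the Shilov idempotent $p$ itself is too big. To remedy this I would replace $p$ with a Bo\.zejko--Pe\l{}czy\'nski near-idempotent $f \in BPB_\eta(A)$ (Theorem~\ref{bope}) satisfying $\|f\|_{L^1} \leq 1+\eta$, yielding $\|\mu_2\|_{M(\mathbb{T})} \leq (1+\eta)\|\mu\|_{M(\mathbb{T})}$ and $\|\mu_1\|_{M(\mathbb{T})} \leq (2+\eta)\|\mu\|_{M(\mathbb{T})}$, at the cost of losing exact orthogonality. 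The new cross term has Fourier coefficients $\hat{f}(n)(1 - \hat{f}(n))\hat\mu(n)$, vanishing on $A$ and bounded by $(1+\eta)^2 \varepsilon r$ off $A$, so $\mu_1 \ast \mu_2 \in F((1+\eta)^2 \varepsilon r)$. The resulting mixed terms $\mu_1^j \ast \mu_2^{k-j}$ ($0<j<k$) in the binomial expansion of $\mu^k$ can then be controlled in $M$-norm via Lemma~\ref{gl2}, exploiting the small Fourier datum together with the polynomial structure of $\mu_2$. Combining these ingredients with the $U(C,k)$ estimates on the main pieces gives
\begin{equation*}
\|\mu^k\|_{M(\mathbb{T})} \leq C(1+\eta)^{k-1}\bigl(1+\varepsilon(2+\eta)^{k-1}\bigr)\|\mu\|_{M(\mathbb{T})}^{k-1} + \text{small cross terms},
\end{equation*}
and tuning $\eta$ and $\varepsilon$ small enough (possibly after an additional sub-decomposition of $\mu_2$ by the disjoint Fourier supports on and off $A$ to ensure $\sigma(\mu_2) \subset W^Y \cup \{0\}$) makes the right-hand side at most $C'\|\mu\|_{M(\mathbb{T})}^{k-1}$, as required.
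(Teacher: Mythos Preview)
Your first decomposition---the exact Fourier cutoff $\mu_2=\sum_{n\in A}\widehat{\mu}(n)e^{int}$, $\mu_1=\mu-\mu_2$---is exactly what the paper does, and you should not have abandoned it. The worry that drives you to the Bo\.zejko--Pe\l{}czy\'nski near-idempotent is misplaced: you never need to bound $\|p\|_{L^1}$, only $\|\mu_2\|_{L^1}$, and Lemma~\ref{gl2} does precisely that. Indeed $\mu_2\in G(R-\delta)$ (its nonzero coefficients lie near $Y$) and $\mu_1\in F(\varepsilon r+\delta)$ (its coefficients lie near $\varepsilon X\cup\{0\}$), while $\|\mu_2+\mu_1\|=\|\mu\|\le K$; Lemma~\ref{gl2} then gives $\|\mu_2\|\le c\|\mu\|$ for any $c>1$ once $\varepsilon$ is small enough. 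With the exact cutoff you keep $\mu_1\ast\mu_2=0$ and $\sigma(\mu_2)\subset W^Y\cup\{0\}$, $\sigma(\mu_1/\varepsilon)\subset W^X\cup\{0\}$ for free, so $\|\mu^k\|=\|\mu_1^k+\mu_2^k\|\le C(c^{k-1}+\varepsilon(1+c)^{k-1})\|\mu\|^{k-1}$ and you are done.

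By contrast, your $BPB_\eta$ route creates a genuine gap: with $\mu_2=f\ast\mu$ and $\mu_1=(1-f)\ast\mu$, neither piece satisfies the spectral hypothesis of $U(C,k)$. For $n\notin A$ one has $\widehat{\mu_2}(n)=\widehat f(n)\widehat\mu(n)$ with $\widehat f(n)$ an uncontrolled number of modulus $\le 1+\eta$, so $\sigma(\mu_2)\not\subset W^Y\cup\{0\}$; likewise $\widehat{\mu_1}(n)=(1-\widehat f(n))\widehat\mu(n)$ need not lie in $\varepsilon W^X$. Your closing ``additional sub-decomposition of $\mu_2$ by the disjoint Fourier supports on and off $A$'' is exactly the exact cutoff you started with---so the detour through $BPB_\eta$ buys nothing and has to be undone anyway. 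The $BPB$ polynomials do enter, but only inside the proof of Lemma~\ref{gl2}; at this stage you should simply quote that lemma.
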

\begin{proof}
Let us fix $K>0$ and take $\mu\in M_{0}(\mathbb{T})$ such that
$$
\sigma(\mu)\subset \left(\varepsilon X\cup Y+B(0,\delta)\right)\cup\{0\}
$$
 and
$||\mu||<K$. Since $\mu\in M_{0}(\mathbb{T})$, there are only
finitely many $n\in\mathbb{Z}$ satisfying $\widehat{\mu}(n)\in
Y+B(0,\delta)$. Hence, we may define the polynomial $f$ by the
conditions $\widehat{f}(n)=\widehat{\mu}(n)$, if
$\widehat{\mu}(n)\in Y+B(0,\delta)$ and $\widehat{f}(n)=0$
otherwise. Then the measure $\nu=\mu-f$ satisfies
$\widehat{\nu}(\mathbb{Z})\subset \left(\varepsilon X\cup
B(0,\delta)\right)\cup{0}$ and the equality $\mu=f+\nu$ holds.
Now, we apply Lemma $\ref{gl2}$. For sufficiently small
$\varepsilon=\varepsilon(c)$ we get $||f||_{L^{1}(\mathbb{T})}\leq
c||\mu||$, where $c$ is any number greater than $1$ and,
consequently $||\nu||\leq ||\mu||+||f||\leq (1+c)||\mu||$. The
measure $\varepsilon^{-1}\nu$ has its Fourier coefficients
contained in
$\left(X+B(0,\varepsilon^{-1}\delta)\right)\cup\{0\}$. By the
assumption there exists $\delta>0$ such that
$||(\varepsilon^{-1}\nu)^{k}||\leq C ||\varepsilon^{-1}\nu||^{k-1}$
which yields
$||\nu^{k}||\leq C\varepsilon ||\nu||^{k-1}$.
We also have $\widehat{f}(\mathbb{Z})=\sigma(f)\subset
\left(Y+B(0,\delta)\right)\cup\{0\}$ and $Y\in U(C,k)$. Hence, taking smaller $\delta$ if
necessary, we get
$||f^{k}||\leq C ||f||^{k-1}$.
Clearly, we may assume that the sets $\varepsilon X+B(0,\delta)$ and
$Y+B(0,\delta)$ are disjoint which leads to $\nu\ast f=0$.
Performing a simple calculation we obtain
\begin{equation*}
\begin{split}
||\mu^{k}||&=||f^{k}+\nu^{k}||\leq ||f^{k}||+||\nu^{k}||\leq
C||f||^{k-1}+C\varepsilon ||\nu||^{k-1}\\
&\leq C(c^{k-1}+\varepsilon(1+c)^{k-1})\cdot ||\mu||^{k-1}.
\end{split}
\end{equation*}
Since $c$ can be chosen arbitrarily close to $1$ and $\varepsilon$
may be as small as we wish, the theorem follows.
\end{proof}
In the formulation and the proof of next theorem we will write
$s(n)$ instead of $s_{n}$ for a clearer display.
\begin{tw}\label{naj}
Let $0<a<b$ and $k\in\mathbb{N}$, $k\geq 2$. For every $C>0$ there exist a sequence of continuous functions
$\psi_{l}:\mathbb{R}_{+}\mapsto\mathbb{R}_{+}$, $l\in\mathbb{N}$ such that whenever a
decreasing sequence $s(n)$ tending to zero satisfies
\begin{equation*}
\frac{s(2^{l}\cdot n+2^{l-1}+1)}{s(2^{l}\cdot
n+1)}<\psi_{l}\left(a\cdot\frac{s(2^{l}\cdot n+2^{l-1})}{s(2^{l}\cdot n+1)}\right)
\end{equation*}
for every $l\in\mathbb{N}$, $n=0,1,\ldots,2^{l-1}$ and $A_{n}\in U(C,k)$, $A_{n}\subset
B(0,b)\cap\{z\in\mathbb{C}:|z|>a\}$ and $(r_{n})$ tends to zero
rapidly enough, then any measure $\mu\in M_{0}(\mathbb{T})$ with
$||\mu||_{M(\mathbb{T})}\leq K$ and with property
\begin{equation*}
\widehat{\mu}(\mathbb{Z})\subset\bigcup(s(n)\cdot
A_{n}+B(0,r_{n}))\cup\{0\}
\end{equation*}
satisfies $\mu^{k}\in L^{1}(\mathbb{T})$.
\end{tw}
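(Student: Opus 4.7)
The plan is to run an induction on the dyadic block structure visible in the hypothesis: at level $l$, group the positive integers into blocks of length $2^l$ starting at $1$, so that block $n$ at level $l+1$ is precisely the union of blocks $2n$ and $2n+1$ at level $l$. I would first fix an increasing sequence $C_0 = C < C_1 < C_2 < \dots$ with a finite limit $C^* < \infty$ (for instance $C_l = C + 1 - 2^{-l}$), designed so that Theorem \ref{zb}, applied once per level with the freedom $C' = C_{l+1} > C_l$, can be iterated without losing a uniform estimate. For each level $l$ and each block index $n$, introduce the rescaled block set
\begin{equation*}
\Lambda_n^{(l)} \;=\; \bigcup_{j = 2^l n + 1}^{2^l (n+1)} \frac{s(j)}{s(2^l n + 1)}\, A_j,
\end{equation*}
which is contained in $B(0, b)$ and lies outside the open disk of radius $a\, s(2^l(n+1))/s(2^l n + 1)$.

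The inductive claim is that $\Lambda_n^{(l)} \in U(C_l, k)$ for every $l$ and $n$; the base case $l = 0$ reads $\Lambda_n^{(0)} = A_{n+1}$ and is the hypothesis. For the inductive step, decompose $\Lambda_n^{(l+1)}$ as the disjoint union of the outer piece $Y := \Lambda_{2n}^{(l)}$ (the first half of the level-$(l+1)$ block, already at the correct scaling) and the inner piece $X := \bigl(s(2^{l+1}n + 2^l + 1)/s(2^{l+1}n+1)\bigr) \cdot \Lambda_{2n+1}^{(l)}$ (the second half, rescaled to match the level-$(l+1)$ normalization). By the scale invariance built into $U(\cdot,k)$, both $X$ and $Y$ lie in $U(C_l, k)$; moreover $Y$ has inner radius $R = a\, s(2^{l+1}n + 2^l)/s(2^{l+1}n+1)$ while $X \subset B\bigl(0,\, b\cdot s(2^{l+1}n + 2^l + 1)/s(2^{l+1}n+1)\bigr)$. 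Theorem \ref{zb} supplies a threshold $\varepsilon_* = \varepsilon_*(b, R, C_{l+1})$ with the property that any compact subset $X' \subset B(0,\varepsilon_* b)$ belonging to $U(C_l,k)$ satisfies $X' \cup Y \in U(C_{l+1}, k)$. Hence I would simply set $\psi_{l+1}(x) := \varepsilon_*(b, x, C_{l+1})$, so that the hypothesis $s(2^{l+1}n+2^l+1)/s(2^{l+1}n+1) < \psi_{l+1}\bigl(a\, s(2^{l+1}n+2^l)/s(2^{l+1}n+1)\bigr)$ is exactly the statement that the outer radius of $X$ stays below the allowed threshold, completing the inductive step.

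To conclude $\mu^k \in L^1(\mathbb T)$: because $\mu \in M_0$ and each set $s(n) A_n + B(0, r_n)$ is bounded away from $0$, for every level $l$ only finitely many Fourier coefficients of $\mu$ fall into a given level-$l$ block region, so the Fourier-support restriction $f_n^{(l)}$ associated with block $n$ is a trigonometric polynomial. Choosing $(r_n)$ to decay fast enough that the block regions become pairwise separated by a positive gap at every level, the spectrum $\sigma(f_n^{(l)}) = \widehat{f}_n^{(l)}(\mathbb Z)$ lies in an admissible neighborhood of $s(2^l n + 1)\Lambda_n^{(l)}$; the $U(C_l,k)$ membership then yields $\|(f_n^{(l)})^k\|_{L^1} \le C_l\, \|f_n^{(l)}\|^{k-1}$. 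The same separation forces all cross convolutions in the expansion $\mu^k = \sum_{n_1,\dots,n_k} f_{n_1}^{(l)} * \cdots * f_{n_k}^{(l)}$ to vanish once the annular supports of $s(n_i) A_{n_i}$ sum disjointly, reducing the expansion to $\mu^k = \sum_n (f_n^{(l)})^k$. The uniform bound $C_l \le C^*$ together with the geometric decay of the $\|f_n^{(l)}\|$ forced by $s(n) \to 0$ give absolute convergence of this series in $L^1(\mathbb T)$, so $\mu^k \in L^1(\mathbb T)$.

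The main obstacle I expect is the final reduction from the $U$-class estimate (which Theorem \ref{zb} naturally delivers as an $M$-norm bound) to genuine membership in $L^1(\mathbb T)$. Making this rigorous requires (i) justifying that the Fourier-support restrictions $f_n^{(l)}$ really are polynomials with $\sigma(f_n^{(l)})=\widehat{f}_n^{(l)}(\mathbb Z)$ inside the neighborhood for which the $U(C_l,k)$-estimate is available, (ii) tuning the decay of $(r_n)$ against the sequence of thresholds $\varepsilon_*(b,\cdot,C_{l+1})$ (which tighten as $s$ decays) so that all mixed convolution terms genuinely vanish, and (iii) checking that compact subsets of sets in $U(C,k)$ inherit the membership, which is immediate from the neighborhood-based definition. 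These are bookkeeping steps, but the interlocking constraints among the $s$-ratios, the size of $(r_n)$, and the constants $C_l$ need to be orchestrated carefully.
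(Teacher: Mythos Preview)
Your inductive argument showing $\Lambda_n^{(l)}\in U(C_l,k)$ is correct and is exactly the paper's induction (their sets $B_{m,n}$ are your $\Lambda_n^{(m)}$ up to notation). The problem is entirely in your last paragraph, where the mechanism you invoke for the absolute convergence of $\sum_n (f_n^{(l)})^{k}$ is wrong.

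You write that convergence follows from ``the geometric decay of the $\|f_n^{(l)}\|$ forced by $s(n)\to 0$''. No such decay is available: the Fourier coefficients of $f_n^{(l)}$ are small, but nothing bounds the number of integers $p$ with $\widehat\mu(p)$ landing in the $n$-th block region, so the $L^{1}$ norm of $f_n^{(l)}$ need not tend to $0$. What one \emph{can} prove is only the uniform bound $\|f_n^{(l)}\|\le 4K$, and this already requires Lemma~\ref{gl2} (hence the Littlewood conjecture and the Bo\.zejko--Pe\l czy\'nski theorem), which you never invoke. Correspondingly, your functions $\psi_{l+1}(x)=\varepsilon_*(b,x,C_{l+1})$ encode only the threshold from Theorem~\ref{zb}; the paper's $\psi_m$ is the minimum of that threshold with $\frac{1}{b}\delta(1,\cdot,K)$ from Lemma~\ref{gl2}, precisely so that the partial sums $\sum_{j\le n}f_j$ satisfy the hypotheses of Lemma~\ref{gl2} and the uniform bound follows.

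The actual source of convergence is a scaling gain you discard. You apply the $U(C_l,k)$ bound directly to $f_n^{(l)}$, obtaining $\|(f_n^{(l)})^{k}\|\le C_l\|f_n^{(l)}\|^{k-1}$, which is true but useless. If instead you rescale $\gamma:=f_n^{(l)}/s(2^{l}n+1)$, then $\widehat\gamma(\mathbb Z)$ sits in the admissible neighborhood of $\Lambda_n^{(l)}$ itself, and the $U(C_l,k)$ inequality for $\gamma$ unwinds to
\[
\|(f_n^{(l)})^{k}\|\le s(2^{l}n+1)\,C^{*}\,\|f_n^{(l)}\|^{k-1}\le s(2^{l}n+1)\,C^{*}(4K)^{k-1}.
\]
It is this factor $s(2^{l}n+1)$ (summable by the $\psi$-conditions), together with the uniform bound from Lemma~\ref{gl2}, that makes $\sum_n(f_n^{(l)})^{k}$ absolutely convergent in $L^{1}$. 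The paper carries this out with the particular dyadic-shell decomposition $f_m\leftrightarrow$ block $n=1$ at level $m-1$, but your fixed-level decomposition would work the same way once these two missing ingredients are supplied.
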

\begin{proof}
For simplicity assume $s(1)=1$. Let $C_{m}$ be an increasing sequence of real numbers such that $C<C_{m}<\widetilde{C}$ for all $m\in\mathbb{N}$ and some $\widetilde{C}>0$. Let us also define $\psi_{l}$ by the formula $\widetilde{\psi}_{l}(\cdot)=\varepsilon(b,\cdot,C_{l})$ where $\varepsilon(\cdot,\cdot,\cdot)$ is the same as in the previous theorem for $C:=C_{l-1}$. We show first by induction that
for every $m,n\geq 0$,
\begin{equation}\label{row1}
B_{m,n}=\bigcup_{j=n\cdot
2^{m}+1}^{(n+1)2^{m}}\frac{s(j)}{s(n\cdot 2^{m}+1)}\cdot A_{j}\in
U(C_{m},k),
\end{equation}
Indeed, for $m=0$ clearly $B_{0,n}=A_{n+1}$. For $m>0$ the
interval of integers $[n\cdot 2^{m}+1,(n+1)\cdot 2^{m}]$ is a
disjoint union of the intervals $[(2n)\cdot 2^{m-1}+1,(2n+1)\cdot
2^{m-1}]$ and $[(2n+1)\cdot 2^{m-1}+1,(2n+2)\cdot 2^{m-1}]$, which
implies
\begin{equation}\label{row2}
B_{m,n}=B_{m-1,2n}\cup\frac{s((2n+1)2^{m-1}+1)}{s(n\cdot
2^{m}+1)}\cdot B_{m-1,2n+1}.
\end{equation}
It is easy to check that
\begin{equation*}
B_{m-1,2n}\subset\{z\in\mathbb{C}:|z|>a\cdot\frac{s(n\cdot
2^{m}+2^{m-1})}{s(n\cdot 2^{m}+1)}\}
\end{equation*}
and
$B_{m-1,2n+1}\subset B(0,b)$.
From the previous theorem it follows that taking in ($\ref{row2})$ the
coefficient $\frac{s((2n+1)2^{m-1}+1)}{s(n\cdot 2^{m}+1)}$ such that
\begin{equation*}
\frac{s((2n+1)2^{m-1}+1)}{s(n\cdot 2^{m}+1)}\leq \widetilde{\psi}_{m}\left(a\cdot \frac{s(n\cdot2^{m}+2^{m-1})}{s(n\cdot 2^{m}+1)}\right)
\end{equation*}
the resulting union belongs to $U(C_{m},k)$ and hence to $U(\widetilde{C},k)$.
\\
Applying ($\ref{row1}$) to $n=1$ and $n=0$, we get that for every
$m=0,1,2,\ldots$
\begin{equation*}
B_{m-1,1}=\bigcup_{j=2^{m-1}+1}^{2^{m}}\frac{s(j)}{s(2^{m-1}+1)}\cdot
A_{j}\in U(\widetilde{C},k)
\end{equation*}
and
\begin{equation}\label{row3}
B_{m,0}\setminus B_{m-1,0}=\bigcup_{j=2^{m-1}+1}^{2^{m}}s(j)\cdot
A_{j}=s(2^{m-1}+1)\cdot B_{m-1,1}
\end{equation}
Then it follows from Theorem $\ref{zb}$ that there exists $r_{m}'$
such that for every measure $\gamma\in M(\mathbb{T})$ with
\begin{equation}\label{row4}
||\gamma||_{M(\mathbb{T})}\leq \frac{4K}{s(2^{m-1}+1)}\text{ and
}\widehat{\gamma}(\mathbb{Z})\subset B_{m-1,1}+B(0,r_{m}')
\end{equation}
there is
\begin{equation}{\label{row5}}
||\gamma^{k}||_{M(\mathbb{T})}< \widetilde{C}||\gamma||^{k-1}.
\end{equation}
Let $\mu\in M_{0}(\mathbb{T})$ with $||\mu||_{M(\mathbb{T})}\leq
K$ satisfy
\begin{equation*}
\widehat{\mu}(\mathbb{Z})\subset\bigcup_{m=1}^{\infty}\left((B_{m,0}\setminus
B_{m-1,0})+B(0,r_{m})\right)
\end{equation*}
where $r_{m}=r_{m}'\cdot s(2^{m-1}+1)$. Since $\mu\in
M_{0}(\mathbb{T})$, for any fixed $m\in\mathbb{N}$ there exist only
finitely many $p\in\mathbb{N}$ such that
$\widehat{\mu}(p)\in(B_{m,0}\setminus B_{m-1,0})+B(0,r_{m})$ (we may assume that these sets are disjoint for different $m's$). Hence
we are able to define the polynomials $f_{m}$ by the condition
$\widehat{f_{m}}(p)=\widehat{\mu}(p)$ for $p\in\mathbb{Z}$ such
that $\widehat{\mu}(p)\in(B_{m,0}\setminus B_{m-1,0})+B(0,r_{m})$
and $\widehat{f_{m}}(p)=0$ for other $p's$. Then we have
$\widehat{\mu}=\sum_{m=0}^{\infty}\widehat{f_{m}}$.
The polynomials
$f_{m}'=\frac{f_{m}}{s(2^{m-1}+1)}$
satisfy ($\ref{row4})$. In fact, the second part of
($\ref{row4})$ follows directly from ($\ref{row3}$). For the first
part it is enough to show that $||f_{m}||\leq 4K$. This follows
from the observation that
\begin{equation*}
f_{m}=\left(\sum_{j=0}^{m}f_{j}\right)-\left(\sum_{j=0}^{m-1}f_{j}\right),
\end{equation*}
the triangle inequality, Lemma $\ref{gl2}$ and properties
\begin{equation*}
\sum_{j=0}^{m}f_{j}\in G(a\cdot s(2^{m}))\text{ and
}\sum_{j=m+1}^{\infty}f_{j}\in F(b\cdot s(2^{m}+1)).
\end{equation*}
Indeed, let us put in Lemma $\ref{gl2}$
\begin{equation*}
f=\sum_{j=0}^{m}f_{j},\text{  }\nu=\mu-f\text{  and
}\varepsilon=1
\end{equation*}
and define a sequence of functions $\psi_{m}$ by the formula
\begin{equation*}
\psi_{m}(\cdot)=\min\left(\widetilde{\psi}_{m}(\cdot),\frac{1}{b}\delta(1,\cdot,K)\right).
\end{equation*}
Then the assumptions of Lemma $\ref{gl2}$ are fulfilled which leads to $||f||<2K$. The second
term is estimated analogously.

Finally we have, by ($\ref{row5}$)
$||f_{m}^{k}||_{L^{1}(\mathbb{T})}<s(2^{m-1}+1)\widetilde{C}||f_{m}||^{k-1}_{L^{1}(\mathbb{T})}<s(2^{m-1}+1)
\widetilde{C}(4K)^{k-1}$.
It leads to the conclusion that the series
$\sum_{m=0}^{\infty}f_{m}^{k}$
is absolutely convergent in $L^{1}(\mathbb{T})$ to $\mu^{k}$ which
finishes the proof.
\end{proof}

\section{Reduction to the case of $M_{0}(\mathbb{T})$}

The first result which will be used in this section is the
following theorem taken from the  book $[GM]$, closely related to
results from $[DK]$.

\begin{tw}\label{mc}
Let $r\in\mathbb{N}$, $r\geq 2$ and $\mu\in M_{c}(\mathbb{T})$. Let us define set $Q=Q(\mu)$
\begin{equation*}
Q=\{n\in\mathbb{Z}:|\widehat{\mu}(n)|\geq 1\}
\end{equation*}
and suppose that $|\widehat{\mu}(n)|\leq e^{-r}$ for $n\notin Q$.
\begin{equation*}
\begin{split}
&\text{If }||\mu||<\frac{r^{\frac{1}{2}}}{4},\text{ then }Q\text{ is a finite set}.
\\&\text{If }||\mu||<\frac{r^{\frac{1}{2}}}{4}\text{ and }N\in\mathbb{N}\text{ is such that } r\leq\left(\ln\frac{N}{4}\ln\ln N\right)^{\frac{1}{2}}\text{ then }\#Q<N.
\end{split}
\end{equation*}
\end{tw}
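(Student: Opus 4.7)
The plan is to derive a uniform upper bound on $\#Q$ in terms of $\|\mu\|_M$ by combining Theorem \ref{bope} (Bozejko-Pelczynski-Bourgain) with Theorem \ref{MPS+K} (McGehee-Pigno-Smith, Konyagin). Once this is done, both claims follow at once from the hypothesis $\|\mu\| < r^{1/2}/4$.

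Assume for contradiction that $\#Q \geq N$ and fix a subset $S \subset Q$ with $|S| = N$. Apply Theorem \ref{bope} to $S$ (after a harmless translation into $\mathbb{N}$ if necessary) with $\varepsilon = 1$ to obtain $f \in BPB_1(S)$ with $\widehat{f}\equiv 1$ on $S$, $\|f\|_{L^1(\mathbb{T})} \leq 2$, and $\#\mathrm{supp}\,\widehat{f}\leq \alpha^{2N}$. Let $g = f \ast \mu$; this is a trigonometric polynomial, $\widehat{g}(n) = \widehat{\mu}(n)$ for $n \in S$ (so at least $N$ of its coefficients have modulus $\geq 1$), and Young's inequality gives $\|g\|_{L^1} \leq \|f\|_{L^1}\|\mu\|_M \leq 2\|\mu\|$. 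By the Pigno-Smith refinement of Theorem \ref{MPS+K}, which permits $g$ to carry additional coefficients of modulus less than $1$ without spoiling the $L\ln N$ lower bound, one has $\|g\|_{L^1} \geq L\ln N$, and therefore
\begin{equation*}
L\ln N \;\leq\; 2\|\mu\|_M \;<\; \tfrac{1}{2}\,r^{1/2}.
\end{equation*}

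Claim 1 follows immediately: if $Q$ were infinite, $N$ could be chosen arbitrarily large, contradicting the displayed bound. For claim 2, the hypothesis $r \leq (\ln(N/4)\ln\ln N)^{1/2}$ gives $r^{1/2}/(2L) \leq (\ln(N/4)\ln\ln N)^{1/4}/(2L)$, which is strictly less than $\ln N$ for $N$ exceeding a fixed absolute constant; hence $\#Q \geq N$ leads to the same contradiction, and $\#Q < N$.

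The principal technical point is the invocation of the Pigno-Smith sharpening, because Theorem \ref{MPS+K} as stated requires \emph{every} nonzero coefficient of the polynomial to have modulus at least $1$, while $g = f\ast\mu$ carries spurious coefficients on $\mathrm{supp}\,\widehat{f}\setminus S$. An alternative route that avoids this exploits the dichotomy $|\widehat{\mu}(n)| \leq e^{-r}$ off $Q$: the spurious coefficients lying in $\mathrm{supp}\,\widehat{f}\setminus Q$ satisfy $|\widehat{g}(n)| \leq 2e^{-r}$, and a Plancherel estimate $\|g_{\mathrm{small}}\|_{L^2} \leq 2e^{-r}\sqrt{\#\mathrm{supp}\,\widehat{f}}$ makes their $L^1$-contribution negligible once the Bozejko-Pelczynski parameter $\varepsilon$ is balanced against $e^{-r}$ and $N$; this bookkeeping is precisely what produces the $\ln\ln N$ factor appearing in the bound of claim 2.
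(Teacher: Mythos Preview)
The paper does not prove this theorem; it is quoted from \cite{grmc} (see also the remark in Section~7, item~6, that the original argument predates the Littlewood conjecture). So there is no ``paper's proof'' to compare against, but your argument has two genuine gaps that should be named.

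\medskip

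\textbf{The claimed refinement of Theorem~\ref{MPS+K} is false.} You assert that if a trigonometric polynomial $g$ has at least $N$ Fourier coefficients of modulus $\geq 1$, then $\|g\|_{L^1}\geq L\ln N$ even in the presence of further coefficients of modulus $<1$. This is not true: the de la Vall\'ee Poussin kernel $V_N=2K_{2N}-K_N$ has $\widehat{V_N}(j)=1$ for all $|j|\leq N$ (so $2N+1$ coefficients of modulus~$1$), carries additional coefficients of modulus in $(0,1)$ on $N<|j|\leq 2N$, yet $\|V_N\|_{L^1}\leq 3$. Theorem~\ref{MPS+K} genuinely needs \emph{all} nonzero coefficients to be large; the presence of many small coefficients interleaved with the large ones can collapse the $L^1$ norm.

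\medskip

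\textbf{Continuity of $\mu$ is never used.} Your argument, as written, applies verbatim to any $\mu\in M(\mathbb T)$. But the theorem is false without the hypothesis $\mu\in M_c(\mathbb T)$: take $\mu=\delta_0$, so that $\widehat{\mu}\equiv 1$, $Q=\mathbb Z$, the gap condition is vacuous, and $\|\mu\|=1<r^{1/2}/4$ for any $r>16$, yet $Q$ is infinite. Any correct proof of the first assertion must exploit continuity in an essential way; yours does not.

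\medskip

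Your ``alternative route'' does not close the gap either: it controls the coefficients of $g=f\ast\mu$ on $\mathrm{supp}\,\widehat f\setminus Q$ via Plancherel, but says nothing about the coefficients on $(\mathrm{supp}\,\widehat f\cap Q)\setminus S$, where $|\widehat\mu(n)|\geq 1$ but $|\widehat f(n)|$ can be arbitrarily small. This is exactly the obstruction that the de la Vall\'ee Poussin example exhibits. The paper's own Lemma~\ref{gl1} (which, as noted in Section~7, is a strengthening of the \emph{second} assertion here) avoids this by first passing to an arithmetic progression $\Gamma$ via Lemma~\ref{postep} so that $\mathrm{supp}\,\widehat f\cap\Gamma$ has size between $d$ and $2d$, then applying $BPB$ to \emph{all} of this set; after convolution every remaining coefficient outside is genuinely small. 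That trick, however, presupposes the finiteness which is the content of the first assertion, so it does not by itself give you claim~1.
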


Applying the above theorem we prove that continuous measures
belong to $M_{0}(\mathbb{T})$ if special
assumptions are imposed on the range of their Fourier transforms. In the next lemma
we denote $L(r,t)=\{z\in\mathbb{C}:r<|z|<t\}$ ($0<r<t$).
\begin{lem}\label{dozera}
Let $w_{k}$, $t_{k}$, $w_{k}<t_{k}$ be sequences of positive real
numbers such that
\begin{enumerate}
    \item $t_{k}\rightarrow 0$ as $k\rightarrow\infty$.
    \item Sequence $t_{k}\sqrt{\ln\frac{t_{k}}{w_{k}}}$ is
    increasing and divergent to $\infty$.
\end{enumerate}
Let $L_{k}=L(w_{k},t_{k})$. If $\mu\in M_{c}(\mathbb{T})$ satisfies
$\widehat{\mu}(\mathbb{Z})\cap L_{k}=\emptyset$ for
all  $k\in\mathbb{N}$, then $\mu\in
M_{0}(\mathbb{T})$.
\end{lem}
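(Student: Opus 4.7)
The plan is to show that $\widehat{\mu}(n)\to 0$ by using the annulus avoidance to turn the hypothesis of Theorem \ref{mc} into something verifiable after a rescaling. Observe first that, because $\widehat{\mu}(\mathbb{Z})$ misses each annulus $L_k=L(w_k,t_k)$, every Fourier coefficient of $\mu$ satisfies either $|\widehat{\mu}(n)|\leq w_k$ or $|\widehat{\mu}(n)|\geq t_k$. Consequently the set
\begin{equation*}
E_k=\{n\in\mathbb{Z}:|\widehat{\mu}(n)|\geq t_k\}
\end{equation*}
coincides with $\{n:|\widehat{\mu}(n)|>w_k\}$, and it suffices to prove that $E_k$ is finite for all sufficiently large $k$. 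Indeed, once this is known, given any $\varepsilon>0$ we choose $k$ large enough so that $w_k<\varepsilon$ and $E_k$ is finite; then $|\widehat{\mu}(n)|\leq w_k<\varepsilon$ for all $n$ outside the finite exceptional set, which gives $\mu\in M_0(\mathbb{T})$.

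To show $E_k$ is finite, I would apply Theorem \ref{mc} to the rescaled measure $\nu_k=\mu/t_k$ with the parameter choice $r_k=\ln(t_k/w_k)$. With this normalisation $Q(\nu_k)=E_k$, and for $n\notin E_k$ we have
\begin{equation*}
|\widehat{\nu_k}(n)|=\frac{|\widehat{\mu}(n)|}{t_k}\leq\frac{w_k}{t_k}=e^{-r_k},
\end{equation*}
so the small-coefficient hypothesis of Theorem \ref{mc} is automatic. The only thing to check is the norm bound $\|\nu_k\|<r_k^{1/2}/4$, which translates into
\begin{equation*}
\|\mu\|<\tfrac{1}{4}\,t_k\sqrt{\ln(t_k/w_k)}.
\end{equation*}
Hypothesis (2) of the lemma says exactly that the right-hand side is increasing and tends to $\infty$, so for all sufficiently large $k$ this inequality holds and Theorem \ref{mc} yields $|E_k|<\infty$.

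There is essentially no obstacle beyond finding the right rescaling: the annulus gap gives the exponential decay outside $Q$ for free, and the divergence condition on $t_k\sqrt{\ln(t_k/w_k)}$ is precisely calibrated to defeat the norm constraint of Theorem \ref{mc}. Continuity of $\mu$ is needed only as input to Theorem \ref{mc}. The proof then assembles as above, finishing by combining $w_k\to 0$ (a consequence of $w_k<t_k\to 0$) with the finiteness of $E_k$ to conclude $\widehat{\mu}(n)\to 0$.
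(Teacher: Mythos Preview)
Your proof is correct and is essentially identical to the paper's: both rescale to $\mu/t_k$, set the parameter $r=\ln(t_k/w_k)$, and invoke Theorem~\ref{mc} once condition~(2) forces $\|\mu\|<\tfrac14 t_k\sqrt{\ln(t_k/w_k)}$ for large $k$. The only cosmetic difference is that you spell out the final deduction of $\widehat{\mu}(n)\to 0$ from the finiteness of $E_k$ and $w_k\to 0$ a bit more explicitly than the paper does.
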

\begin{proof}
Let us define a sequence $x_{k}$ by formula
$x_{k}=\ln\frac{t_{k}}{w_{k}}$. Then, the second condition on
$w_{k}$, $t_{k}$ implies that the sequence $t_{k}\sqrt{x_{k}}$ is
increasing and divergent to $\infty$. Hence, there exists
$k_{0}\in\mathbb{N}$ such that for every $k>k_{0}$
\begin{equation*}
||\mu||<\frac{t_{k}\sqrt{x_{k}}}{4}.
\end{equation*}
By the assumptions of our theorem $\widehat{\mu}(\mathbb{Z})\cap
L_{k}=\emptyset$ for every $k>k_{0}$. That means
\begin{equation*}
\frac{\widehat{\mu}(m)}{{t_{k}}}\notin L(e^{-x_{k}},1)\text{ for
all $m\in\mathbb{Z}$, $k>k_{0}$}.
\end{equation*}
From Theorem $\ref{mc}$ we obtain that the set
\begin{equation*}
A_{k}=\{n\in\mathbb{Z}:\left|\frac{\widehat{\mu}(n)}{t_{k}}\right|>1\}=\{n\in\mathbb{Z}:\left|{\widehat{\mu}(n)}\right|>t_{k}\}
\end{equation*}
is finite for all $k>k_{0}$ which finishes the proof.
\end{proof}
\begin{rem}
Obviously the assumption $\widehat{\mu}(\mathbb{Z})\cap L_{k}=\emptyset$ from the preceding lemma can be replaced by the requirement $\widehat{\mu}(\mathbb{Z})\cap L_{k_{n}}=\emptyset$ for any subsequence $k_{n}$.
\end{rem}
This  will be used in the proof of Theorem $1$ in the next section.
The second reduction, needed in the proof of Theorem $2$, is as follows: having arbitrary $\mu\in
M(\mathbb{T})$ we split it in a standard way $\mu=\mu_{c}+\mu_{d}$
where $\mu_{c}$ is the continuous part of the measure and $\mu_{d}$
its discrete part. Then from assumptions on the set
$\widehat{\mu}(\mathbb{Z})$ we would like to extract information
about set $\widehat{\mu_{c}}(\mathbb{Z})$ which with the aid of the last
lemma leads to the conclusion that $\mu_{c}\in M_{0}(\mathbb{T})$ and for measures from this class we shall
apply Theorem $\ref{naj}$. One thing remains to be proved - if
$\mu\in M_{0}(\mathbb{T})$ has a natural spectrum and $\nu$ is an arbitrary
measure with a natural spectrum, then $\mu+\nu$ also has a
natural spectrum. The key to obtain this fact is an important
theorem of Zafran (see [Z1]). To formulate it we introduce the
following definition.

\begin{de}
Let $\mathscr{C}$ denote the set of measures with a natural spectrum
with Fourier-Stieltjes coefficients from $c_0$, i.e.
\begin{equation*}
\mathscr{C}=\{\mu\in
M_{0}(\mathbb{T}):\sigma(\mu)=\overline{\widehat{\mu}(\mathbb{Z})}=\widehat{\mu}(\mathbb{Z})\cup\{0\}\}.
\end{equation*}
\end{de}
For any commutative Banach algebra $A$ we denote by
$\mathfrak{M}(A)$ the space of the maximal modular ideals of $A$
identified also as the set of all multplicative linear functionals
on $A$ (cf. [Ż]).
\begin{tw}[Zafran] \label{zaf} The following hold true:
\begin{enumerate}
    \item if
    $h\in\mathfrak{M}(M_{0}(\mathbb{T}))\setminus\mathbb{Z}$, then
    $h(\mu)=0$ for $\mu\in\mathscr{C}$.
    \item $\mathscr{C}$ is closed ideal in
    $M_{0}(\mathbb{T})$.
    \item $\mathfrak{M}(\mathscr{C})=\mathbb{Z}$.
\end{enumerate}
\end{tw}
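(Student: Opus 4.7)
The plan is to establish (1) first, using the single tool $h(\mu)\in\sigma(\mu)=\widehat{\mu}(\mathbb{Z})\cup\{0\}$ provided by the Gelfand representation, together with the dichotomy coming from $\mathfrak{M}(L^{1}(\mathbb{T}))=\mathbb{Z}$; parts (2) and (3) will then follow almost automatically. Fix $h\in\mathfrak{M}(M_{0}(\mathbb{T}))\setminus\mathbb{Z}$ and restrict it to the closed ideal $L^{1}(\mathbb{T})$. Either $h|_{L^{1}(\mathbb{T})}=h_{n_{0}}$ for some integer $n_{0}$, or $h|_{L^{1}(\mathbb{T})}\equiv 0$. In the first case, testing $h$ on convolutions $\mu*e^{in_{0}\cdot}\in L^{1}(\mathbb{T})$ (which equal $\widehat{\mu}(n_{0})\,e^{in_{0}\cdot}$) forces $h(\mu)=\widehat{\mu}(n_{0})=h_{n_{0}}(\mu)$ for every $\mu\in M_{0}(\mathbb{T})$, so $h=h_{n_{0}}\in\mathbb{Z}$, a contradiction. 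Hence $h|_{L^{1}(\mathbb{T})}\equiv 0$.

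Now fix $\mu\in\mathscr{C}$ and suppose for contradiction that $h(\mu)\neq 0$. Then $h(\mu)=\widehat{\mu}(m_{0})$ for some $m_{0}\in\mathbb{Z}$. Consider $\mu_{1}=\mu-\widehat{\mu}(m_{0})e^{im_{0}\cdot}$. Assuming $\mu_{1}\in\mathscr{C}$ (see the obstacle below), we have $h(\mu_{1})=h(\mu)\neq 0$ because $\widehat{\mu}(m_{0})e^{im_{0}\cdot}\in L^{1}(\mathbb{T})\subset\ker h$, so $h(\mu_{1})\in\widehat{\mu_{1}}(\mathbb{Z})\cup\{0\}$ forces some $m_{1}\neq m_{0}$ with $\widehat{\mu}(m_{1})=\widehat{\mu}(m_{0})$. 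Iterating this step produces infinitely many distinct $m_{i}\in\mathbb{Z}$ with $|\widehat{\mu}(m_{i})|=|h(\mu)|>0$, contradicting $\widehat{\mu}\in c_{0}$. This proves (1).

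The main obstacle is showing $\mu-p\in\mathscr{C}$ for $\mu\in\mathscr{C}$ and $p$ a trigonometric polynomial, a special case of (2). The inclusion $\sigma(\mu-p)\supseteq\widehat{\mu-p}(\mathbb{Z})\cup\{0\}$ is automatic; for the reverse, apply the same dichotomy to every $\widetilde h\in\mathfrak{M}(M(\mathbb{T}))\setminus\mathbb{Z}$, which forces $\widetilde h$ to vanish on $L^{1}(\mathbb{T})$, so $\widetilde h(\mu-p)=\widetilde h(\mu)\in\widehat{\mu}(\mathbb{Z})\cup\{0\}$; a case analysis on the finitely many altered coefficients places this value inside $\widehat{\mu-p}(\mathbb{Z})\cup\{0\}$. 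Care must be taken to avoid circular dependence on (1); this is handled by isolating the $L^{1}$-vanishing of asymmetric functionals on $M(\mathbb{T})$ as a standalone fact, using only the identification $\mathfrak{M}(L^{1}(\mathbb{T}))=\mathbb{Z}$ together with the ideal structure of $L^{1}(\mathbb{T})$ in $M(\mathbb{T})$.

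Given (1), the rest is routine. For the ideal property in (2), for $\mu\in\mathscr{C}$ and $\nu\in M_{0}(\mathbb{T})$, Gelfand gives $\sigma(\mu*\nu)=\{h(\mu)h(\nu):h\in\mathfrak{M}(M_{0}(\mathbb{T}))\}\cup\{0\}$; integer $h$ contribute exactly $\widehat{\mu*\nu}(\mathbb{Z})$, while for $h\notin\mathbb{Z}$ one has $h(\mu)=0$ by (1), so those contribute only $\{0\}$. Norm-closedness of $\mathscr{C}$ follows from uniform convergence $\widehat{\mu_{n}}\to\widehat{\mu}$ (an immediate consequence of $\|\mu_{n}-\mu\|\to 0$) combined with upper semicontinuity of the spectrum. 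Part (3) is then automatic: $\mathbb{Z}\subseteq\mathfrak{M}(\mathscr{C})$ via the non-trivial restrictions $h_{n}|_{\mathscr{C}}$ (non-trivial since $L^{1}(\mathbb{T})\subset\mathscr{C}$), and by the ideal property every $h\in\mathfrak{M}(\mathscr{C})$ extends to an $H\in\mathfrak{M}(M_{0}(\mathbb{T}))$, which by (1) must lie in $\mathbb{Z}$.
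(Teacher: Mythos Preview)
The paper does not prove this theorem; it is quoted from Zafran's original paper [Z1] and used as a black box, so there is no in-paper proof to compare against. That said, your argument for part (1) has a genuine gap at exactly the place you flag as the obstacle.

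Your ``case analysis'' for showing $\mu-p\in\mathscr{C}$ does not go through. Take $p=\widehat{\mu}(m_{0})e^{im_{0}\cdot}$ and suppose the value $\lambda:=\widehat{\mu}(m_{0})$ is attained by $\widehat{\mu}$ \emph{only} at $m_{0}$. For any $\widetilde{h}\notin\mathbb{Z}$ with $\widetilde{h}(\mu)=\lambda$ --- and your contradiction hypothesis supplies at least one such functional, namely $h$ itself --- you get $\widetilde{h}(\mu-p)=\widetilde{h}(\mu)=\lambda$, yet $\lambda\notin\widehat{\mu-p}(\mathbb{Z})\cup\{0\}$ because $\widehat{\mu-p}(m_{0})=0$ and no other coefficient of $\mu$ equals $\lambda$. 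So $\mu-p\notin\mathscr{C}$ and the iteration cannot even start. Subtracting \emph{all} indices where $\widehat{\mu}=\lambda$ at once has the same defect: you still cannot conclude that the resulting measure $\nu$ lies in $\mathscr{C}$, and without that you have no way to place $h(\nu)=\lambda$ inside $\widehat{\nu}(\mathbb{Z})\cup\{0\}$. The circularity you worried about is real and is not resolved by the standalone fact that non-integer functionals kill $L^{1}$.

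The missing idea is that every nonzero point of $\sigma(\mu)=\widehat{\mu}(\mathbb{Z})\cup\{0\}$ is \emph{isolated}: since $\widehat{\mu}\in c_{0}$, the set $\sigma(\mu)\cap\{|z|\geq\varepsilon\}$ is finite for every $\varepsilon>0$. Hence the Riesz spectral idempotent $e$ associated with the singleton $\{\lambda\}$ exists (holomorphic functional calculus in $M(\mathbb{T})$), and $\widehat{e}(n)=\mathbf{1}_{\{\widehat{\mu}(n)=\lambda\}}$ has finite support, so $e$ is a trigonometric polynomial and in particular $e\in L^{1}(\mathbb{T})$. But $h(e)=1$ because $h(\mu)=\lambda$, contradicting $h|_{L^{1}}\equiv 0$. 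This is Zafran's actual argument; it bypasses entirely the need to show that any perturbed measure stays in $\mathscr{C}$. Your derivations of (2) and (3) from (1) are essentially correct, though closedness is obtained more cleanly directly from (1) (if $\mu_{n}\to\mu$ with each $\mu_{n}\in\mathscr{C}$, then $h(\mu)=\lim h(\mu_{n})=0$ for every $h\notin\mathbb{Z}$) than via upper semicontinuity of the spectrum.
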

It easy an elementary fact that
$L^{1}(\mathbb{T})\subset\mathscr{C}$ and from preceding theorem
we conclude
\begin{equation*}
\mathrm{Rad}(L^{1}(\mathbb{T}))=\{\mu\in
M(\mathbb{T}):\exists_{k\in\mathbb{N}}\mu^{\ast k}\in
L^{1}(\mathbb{T})\}\subset\mathscr{C}.
\end{equation*}
On the contrary to the second condition of Zafran's theorem, the
sum of two measures with a natural spectrum does not necessarily
have a natural spectrum. The proof of this fact is based on the
construction of the measure supported on an independent Cantor set
as in [R] (see also [Z1] for details). However, as stated before,
assuming more on one summand provides the desired property.

\begin{tw}\label{za}
The sum of two measures with natural spectrum has natural
spectrum, if one of them has Fourier coefficients tending to zero.
\end{tw}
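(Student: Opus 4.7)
The plan is to verify the nontrivial inclusion $\sigma(\mu+\nu)\subseteq\overline{\widehat{\mu+\nu}(\mathbb{Z})}$. Fix an arbitrary $h\in\mathfrak{M}(M(\mathbb{T}))$. Since $\mathfrak{M}(L^1(\mathbb{T}))=\mathbb{Z}$, the restriction $h|_{L^1}$ either vanishes identically or equals the evaluation $h_n$ for some $n\in\mathbb{Z}$. In the second situation I would upgrade this to $h=h_n$ on all of $M(\mathbb{T})$: for arbitrary $\sigma\in M(\mathbb{T})$ pick $f\in L^1$ with $\widehat{f}(n)=1$; since $L^1$ is an ideal one has $f\ast\sigma\in L^1$, and multiplicativity together with $h(f)=1$ yields $h(\sigma)=h(f\ast\sigma)=\widehat{\sigma}(n)$. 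Thus $h(\mu+\nu)=\widehat{\mu+\nu}(n)$ lies trivially in $\overline{\widehat{\mu+\nu}(\mathbb{Z})}$.

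Assume now $h|_{L^1}\equiv 0$. Then $h|_{M_0}$ is either zero or an element of $\mathfrak{M}(M_0(\mathbb{T}))\setminus\mathbb{Z}$, because every integer character is nonzero on $L^1$. Either way $h$ vanishes on $\mathscr{C}$ (directly if $h|_{M_0}=0$, and by Zafran's Theorem $\ref{zaf}$(1) otherwise), so $h(\mu)=0$ and $\lambda:=h(\mu+\nu)=h(\nu)$. The natural spectrum of $\nu$ places $\lambda\in\overline{\widehat{\nu}(\mathbb{Z})}$, and the goal becomes to produce a sequence $n_k\in\mathbb{Z}$ with $|n_k|\to\infty$ and $\widehat{\nu}(n_k)\to\lambda$; once such a sequence exists, $\mu\in M_0$ forces $\widehat{\mu}(n_k)\to 0$ and hence $\widehat{\mu+\nu}(n_k)\to\lambda$, which is what is needed.

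The main obstacle is the case where $\lambda$ is an isolated point of $\overline{\widehat{\nu}(\mathbb{Z})}$ realized at only finitely many integers; in every other situation either $\lambda$ is achieved infinitely often, or is a limit of distinct non-$\lambda$ values of $\widehat{\nu}$, and a subsequence with $|n_k|\to\infty$ is extracted at once. I would eliminate the pathological sub-case via the Shilov idempotent theorem: clopenness of $\{\lambda\}$ in $\sigma(\nu)$ yields an idempotent $e\in M(\mathbb{T})$ with $\psi(e)=1\Leftrightarrow\psi(\nu)=\lambda$ for every $\psi\in\mathfrak{M}(M(\mathbb{T}))$. Testing at the integer characters gives $\widehat{e}(m)=1$ exactly when $\widehat{\nu}(m)=\lambda$, so the finiteness assumption makes $\widehat{e}$ finitely supported, i.e., $e$ is a trigonometric polynomial and in particular $e\in L^1$. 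Then $h|_{L^1}\equiv 0$ gives $h(e)=0$, while $h(\nu)=\lambda$ forces $h(e)=1$: this contradiction eliminates the pathological sub-case and finishes the proof.
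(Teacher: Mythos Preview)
Your proof is correct and follows the same overall architecture as the paper's: invoke Zafran's theorem to kill $h(\mu)$ whenever $h$ is not an integer character, reduce to showing $h(\nu)\in\overline{\widehat{\mu+\nu}(\mathbb{Z})}$, and treat separately the easy case where $\lambda=h(\nu)$ is approximated by $\widehat{\nu}(n_k)$ along some $|n_k|\to\infty$. The genuine difference lies in the handling of the pathological isolated-point case. The paper first argues that $\widehat{\nu}(n_0)$ remains isolated in $\sigma(\mu+\nu)$ (a separate contradiction argument using Zafran again), and then applies holomorphic functional calculus to $\mu+\nu$ with a function equal to $z$ away from $\widehat{\nu}(n_0)$ and constant $\widehat{\nu}(n_0)+1$ near it, obtaining a contradiction from $\varphi(f(\mu+\nu))=f(\varphi(\mu+\nu))$. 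You instead apply the Shilov idempotent theorem directly to $\nu$: the resulting idempotent $e$ has $\widehat{e}$ supported on the finite set $\{m:\widehat{\nu}(m)=\lambda\}$, hence $e\in L^1(\mathbb{T})$, and the clash $h(e)=0$ versus $h(e)=1$ is immediate. Your route is a bit more economical, since it works with $\nu$ alone and bypasses the intermediate isolation argument for $\sigma(\mu+\nu)$; the paper's route has the minor advantage of not naming the Shilov theorem explicitly, though of course the functional calculus it uses is equivalent machinery.
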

\begin{proof}
The spectrum of a measure is the image of its Gelfand transform.
Hence using the result of Zafran for $\mu\in\mathscr{C}$ and $\nu$
with natural spectrum, we obtain
\begin{equation*}
\aligned
\sigma(\mu+\nu)=&\{\varphi(\mu+\nu):\varphi\in\mathfrak{M}(M(\mathbb{T}))\}\\
=&(\widehat{\mu}+\widehat{\nu})(\mathbb{Z})\cup\{\varphi(\nu):\varphi\in\mathfrak{M}(M(\mathbb{T}))\setminus\mathbb{Z}\}.
\endaligned
\end{equation*}
Since $\nu$ has a natural spectrum, for every
$\varphi\in\mathfrak{M}(M(\mathbb{T}))$ we have
$\varphi(\nu)\in\overline{\widehat{\nu}(\mathbb{Z})}$. Now we
consider two cases
\begin{enumerate}
\item
$\varphi(\nu)\in\overline{\widehat{\nu}(\mathbb{Z})}\setminus\widehat{\nu}(\mathbb{Z})$.
\item $\varphi(\nu)\in\widehat{\nu}(\mathbb{Z})$.
\end{enumerate}
In the first case there exists an increasing sequence
$(n_{k})_{k\in\mathbb{N}}$ of integers such that
$\lim_{k\rightarrow\infty}\widehat{\nu}(n_{k})=\varphi(\nu)$.
Since $\mu\in M_{0}(\mathbb{T})$, we get
$\varphi(\nu)=\lim_{k\rightarrow\infty}\widehat{\nu}(n_{k})=\lim_{k\rightarrow\infty}(\widehat{\mu}
+\widehat{\nu})(n_{k})$.
Hence $\varphi(\nu)\in\overline{(\widehat{\mu}+\widehat{\nu})(\mathbb{Z})}$,
which completes the proof in this case.
\\
In the second case $\varphi(\nu)=\widehat{\nu}(n_{0})$ for some
$n_{0}\in\mathbb{Z}$. If $\widehat{\mu}(n_{0})=0$ then
$\varphi(\nu)=\widehat{\nu}(n_{0})+\widehat{\mu}(n_{0})\in(\widehat{\mu}+\widehat{\nu})(\mathbb{Z})$
and the result follows. Assume now $\widehat{\mu}(n_{0})\neq 0$.
If $\widehat{\nu}(n_{0})$ is an accumulation point of
$\sigma(\nu)=\overline{\widehat{\nu}(\mathbb{Z})}$, then we
proceed as in the first case. It remains to consider the case when
$\widehat{\nu}(n_{0})$ is an isolated point of
$\overline{\widehat{\nu}(\mathbb{Z})}$. Because $\mu\in
M_{0}(\mathbb{T})$ we get that also $\widehat{\nu}(n_{0})$ is an
isolated point of
$\overline{(\widehat{\mu}+\widehat{\nu})(\mathbb{Z})}$. We will
prove a stronger statement, that $\widehat{\nu}(n_{0})$ is an
isolated point of $\sigma(\mu+\nu)$. Indeed, suppose on the
contrary that there exists a sequence of complex numbers
$(\lambda_{k})_{k\in\mathbb{N}}\subset\sigma(\mu+\nu)$ tending to
$\widehat{\nu}(n_{0})$. Since the spectrum of a measure is the
image of its Gelfand transform, we can choose a sequence
$\varphi\notin
(h_{k})_{k\in\mathbb{N}}\in\mathfrak{M}(M(\mathbb{T}))$ such that
$h_{k}(\mu+\nu)=\lambda_{k}$. Without losing generality, we may
assume that for a sufficiently large $k$, the functionals $h_{k}$
are not the Fourier coefficients (otherwise
$\widehat{\nu}(n_{0})\in\overline{(\widehat{\mu}+\widehat{\nu})(\mathbb{Z})}$
and the proof is finished). Using again the theorem of Zafran we
get
\begin{equation*}
\lim_{k\rightarrow\infty}h_{k}(\mu+\nu)=\lim_{k\rightarrow\infty}h_{k}(\nu)=\widehat{\nu}(n_{0}).
\end{equation*}
But
$h_{k}(\nu)\in\sigma(\nu)=\overline{\widehat{\nu}(\mathbb{Z})}$.
Hence $\widehat{\nu}(n_{0})$ is not an isolated point of
$\overline{\widehat{\nu}(\mathbb{Z})}$, which contradicts the
assumption.
\\
Since $\sigma(\mu+\nu)$ has a complex number
$\widehat{\nu}(n_{0})$ as an isolated point, we can find two open
sets $A,B\subset\mathbb{C}$ such that $A\cap B=\emptyset$,
$\sigma(\mu+\nu)\subset A\cup B$, $\widehat{\nu}(n_{0})\in B$ and
$\sigma(\mu+\nu)\setminus\widehat{\nu}(n_{0})\subset A$. Let $f$
be a holomorphic function defined on $A\cup B$ by putting $f(z)=z$
for $z\in A$ and $f\equiv \widehat{\nu}(n_{0})+1$ on $B$. By the
spectral mapping theorem there exists a measure
$\upsilon:=f(\mu+\nu)\in M(\mathbb{T})$ satisfying
\begin{equation*}
\widehat{\upsilon}(m)=f((\widehat{\mu}+\widehat{\nu})(m))\text{ for all
$m\in\mathbb{Z}$}.
\end{equation*}
By the definition of $f$ we have
$\widehat{\upsilon}(n)=(\widehat{\mu}+\widehat{\nu})(n)$ for
$n\neq n_{0}$. Moreover, since we have assumed
$\widehat{\mu}(n_{0})\neq 0$ we have
$\widehat{\mu}(n_{0})+\widehat{\nu}(n_{0})\neq
\widehat{\nu}(n_{0})$ and
$\widehat{\mu}(n_{0})+\widehat{\nu}(n_{0})\in A$ which leads to
\begin{equation*}
\widehat{\upsilon}(n_{0})=(\widehat{\mu}+\widehat{\nu})(n_{0}).
\end{equation*}
Therefore, the measures $\upsilon=f(\mu+\nu)$ and $\mu+\nu$ have
the same Fourier coefficients, which implies $f(\mu+\nu)=\mu+\nu$.
From point 1. of Zafran's theorem we have $\varphi(\mu)=0$ which
by the properties of functional calculus implies
\begin{gather*}
\widehat{\nu}(n_{0})=\varphi(\mu+\nu)=\varphi(f(\mu+\nu))=f(\varphi(\mu+\nu))=\\
=f(\varphi(\nu))=f(\widehat{\nu}(n))=\widehat{\nu}(n)+1.
\end{gather*}
This contradiction completes the proof.
\end{proof}

\section{Proofs of the Main Theorems}
We begin with the proof of Theorem $1$. Let sequences $t_{k}$,
$w_{k}$ and $L_{k}$ be as in Lemma $\ref{dozera}$ and let $b>2$,
$a<1$. We put $A_{n}=A=\{-1,1\}$ for $n\in\mathbb{N}$ and take $C>0$ from Theorem $12$ such that $A\in U(C,2)$. Next, let us denote by $\psi_{n}$ the sequence of functions from Theorem
$\ref{naj}$. We construct inductively sequence $\varepsilon_{n}$
as follows: $\varepsilon_{0}=1$ and if
$\varepsilon_{0},\ldots,\varepsilon_{n}$ are chosen let us take
the smallest $k$ such that
$t_{k}<\varepsilon_{1}\cdot\ldots\cdot\varepsilon_{n}$ and rename
it as $k_{n}$. Now we put  $\varepsilon_{n+1}$ any number which
satisfies $0<\varepsilon_{n+1}<\frac{1}{2}w_{k_{n}}$ and
$\varepsilon_{n+1}<\psi_{n}(a\varepsilon_{1}\cdot\ldots\cdot\varepsilon_{n})$.
Every $n\in\mathbb{N}$ has a unique binary expansion
\begin{equation*}
n=\sum a_{i}2^{i},\text{ $a_{i}\in\{0,1\}$}.
\end{equation*}
We put $s(n+1)=\prod\varepsilon_{i}^{a_{i}}$, where the $a_{i}$ are the
coefficient of the binary expansion given above. We choose the sequence $r(n)$
from Theorem $\ref{naj}$ and modify it (if necessary) to guarantee
the conditions: $r(2^{n-1})<t_{k_{n}}$ and
$r(2^{n})<\frac{1}{2}w_{k_{n}}$. Finally, we put
\begin{equation*}
U=\bigcup_{n\in\mathbb{N}}\left(s(n)\cdot A_{n}+B(0,r(n))\right).
\end{equation*}
Let $\mu\in M_{c}(\mathbb{T})$ be a measure such that
$\widehat{\mu}(\mathbb{Z})\subset U\cup\{0\}$. Then, by the
construction $\left(U\cup\{0\}\right)\cap I_{k_{n}}=\emptyset$. By
Lemma $\ref{dozera}$ and the remark following it, $\mu\in
M_{0}(\mathbb{T})$. Finally, Theorem $\ref{naj}$ yields that
$\mu^{2}\in L^{1}(\mathbb{T})$. Hence, by Zafran's theorem
$\mu\in\mathscr{C}$, i.e. $\mu$ has a natural spectrum.

We move on now to the proof of Theorem 2. We start from the
following simple lemma whose proof is left to the reader.
\begin{lem}\label{cz}
Let $S=\bigcup_{k=1}^\infty B_k\subset\mathbb{C}$ be the union of balls such that
$0\in\overline{S}$. Then there exists a (topological) Cantor set $K$  such that
$K-K\subset\overline{S\cup -S}$.
\end{lem}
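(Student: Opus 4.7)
The plan is to realise $K$ as a geometric Cantor set of the form
\begin{equation*}
K=\Bigl\{\sum_{n=1}^\infty \epsilon_n a_n : \epsilon_n\in\{0,1\}\Bigr\}
\end{equation*}
for a carefully chosen sequence $(a_n)$ of points of $S$ tending to $0$. The virtue of this parametrisation is that $K-K=\bigl\{\sum_{n=1}^\infty \delta_n a_n : \delta_n\in\{-1,0,1\}\bigr\}$, so any nonzero element of $K-K$ is dominated by its first nonzero term $\delta_N a_N\in\pm S$, and a small-tail estimate will keep the whole sum inside one of the balls comprising $S\cup -S$.

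First I would pick, inductively, points $a_n\in S\setminus\{0\}$ and radii $\rho_n>0$ such that $B(a_n,\rho_n)\subset S$ and
\begin{equation*}
\sum_{m>n}|a_m|<\tfrac12\min(\rho_n,|a_n|).
\end{equation*}
This is possible because $S$ is open (a union of open balls, so every point of $S$ has a ball-neighbourhood inside $S$) and $0\in\overline S$ (so at each stage the next centre $a_{n+1}$ may be chosen arbitrarily close to the origin, making all subsequent $|a_m|$ as small as required). The resulting gap condition $|a_n|>\sum_{m>n}|a_m|$ makes the coding map $\{0,1\}^{\mathbb{N}}\to K$, $(\epsilon_n)\mapsto\sum\epsilon_n a_n$, injective; continuity is immediate from the uniform convergence of the series, and since the domain is compact the map is a homeomorphism. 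Thus $K$ is a Cantor set, and it contains $0$ via the identically zero sequence.

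For the inclusion $K-K\subset\overline{S\cup -S}$ the verification then reduces to a single estimate. Given $x=\sum \delta_n a_n\neq 0$ with $\delta_n\in\{-1,0,1\}$, let $N$ be the smallest index with $\delta_N\neq 0$; then
\begin{equation*}
|x-\delta_N a_N|\leq\sum_{m>N}|a_m|<\rho_N,
\end{equation*}
so $x\in B(\delta_N a_N,\rho_N)=\delta_N B(a_N,\rho_N)\subset S\cup -S$, while $x=0$ lies in $\overline S$ by hypothesis. The only step that demands any attention is the recursive choice of $(a_n,\rho_n)$, which must simultaneously enforce the tail bound $\sum_{m>n}|a_m|<\rho_n$ (so that the perturbation fits inside the ball at stage $n$) and the gap bound $\sum_{m>n}|a_m|<|a_n|$ (so that the coding map is injective and $K$ has the desired topological structure); both requirements are easily met by invoking openness of $S$ and $0\in\overline S$ at each inductive step, and no deeper obstruction arises.
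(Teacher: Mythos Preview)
Your argument is correct. The paper itself leaves this lemma to the reader, calling it simple, so there is no proof to compare against; your construction of $K=\bigl\{\sum_{n\ge 1}\epsilon_n a_n:\epsilon_n\in\{0,1\}\bigr\}$ with centres $a_n\in S$ chosen so that $B(a_n,\rho_n)\subset S$ and $\sum_{m>n}|a_m|<\tfrac12\min(\rho_n,|a_n|)$ is precisely the natural way to fill in the details, and every step (the inductive choice using openness of $S$ and $0\in\overline S$, injectivity from the gap condition, the tail estimate placing each nonzero difference inside $\pm B(a_N,\rho_N)$) is sound.
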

Let $D=\{-1,1\}$ and $E=\{-2,-1,1,2\}$. By Theorem $\ref{trala}$,
$D\in U(C,2)$ and $E\in U(C,4)$ for some $C>0$. Let
$(s_n)_{n=1}^\infty:=(s(n))_{n=0}^{\infty}$ and
$(r_n)_{n=1}^\infty$ satisfy the conditions of Theorem $\ref{naj}$
for $A_n=D$, $n=1,2,\dots$ and
$(s_n)_{n=1}^\infty:=(s(n))_{n=1}^{\infty}$ and
$(r'_n)_{n=1}^\infty$ satisfy the conditions of Theorem
$\ref{naj}$ for $A_n=E$, $n=1,2,\dots$  and, moreover,
$2|s_m|+r'_m+r'_n<r_n$ for $m>n$. Let $G_n$, $n=1,2,\dots$, be a
Cantor set satisfying, by Lemma \ref{cz},
\begin{equation*}
G_n-G_n\subset \overline{\bigcup_{k>n} B(s_k,r'_k)\cup\bigcup_{k>n}B(-s_{k},r_{k}')}.
\end{equation*}
We also put $A_{0}=B_{0}=G_{0}=\{0\}$, $r_{0}=r_{0}'=0$, $s_{0}=1$ and with a little abuse of notation $B(0,0)=\{0\}$. Then we define
\begin{equation*}
X=\bigcup_{n=0}^\infty (s_nA_n+G_n).
\end{equation*}
Clearly the set $X$ is closed. Suppose now that $\widehat{\mu}(\mathbb{Z})\subset X$. We will now use the following result from $[GW]$.
\begin{lem}
Let $\mu\in M(\mathbb{T})$. Then $\overline{\widehat{\mu_{d}}(\mathbb{Z})}\subset\overline{\widehat{\mu}(\mathbb{Z})}$ where $\mu_{d}$ is the discrete part of measure $\mu$.
\end{lem}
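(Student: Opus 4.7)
The plan is to show that every Fourier coefficient $\widehat{\mu_d}(n_0)$ of the discrete part is a limit of values of $\widehat{\mu}$, from which the closure statement follows. Writing $\mu=\mu_c+\mu_d$, the basic identity is $\widehat{\mu}(n_0+m)=\widehat{\mu_c}(n_0+m)+\widehat{\mu_d}(n_0+m)$, so the task is to find a sequence of integers $m$ along which simultaneously $\widehat{\mu_c}(n_0+m)\to 0$ and $\widehat{\mu_d}(n_0+m)\to \widehat{\mu_d}(n_0)$. The first condition will come from Wiener's theorem applied to $\mu_c$, and the second from Bohr (almost) periodicity of $\widehat{\mu_d}$ viewed as a function of $m$.

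Write $\mu_d=\sum_{i\geq 1}a_i\delta_{x_i}$ with $\sum|a_i|<\infty$. Given $\varepsilon>0$, choose $N$ with $\sum_{i>N}|a_i|<\varepsilon$. Dirichlet's simultaneous approximation theorem (or equivalently Bohr almost periodicity) provides a syndetic set $B_{N,\varepsilon}\subset\mathbb{Z}$ of integers $m$ such that $|e^{-imx_j}-1|<\varepsilon$ for all $j=1,\dots,N$; in particular $B_{N,\varepsilon}$ has positive lower density. Meanwhile Wiener's theorem for $\mu_c$ gives
\begin{equation*}
\lim_{M\to\infty}\frac{1}{2M+1}\sum_{|n|\leq M}|\widehat{\mu_c}(n)|^2=\sum_{x\in\mathbb{T}}|\mu_c(\{x\})|^2=0,
\end{equation*}
so for every $\varepsilon>0$ the set $W_\varepsilon=\{n\in\mathbb{Z}:|\widehat{\mu_c}(n)|<\varepsilon\}$ has density one. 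Since a shift of $B_{N,\varepsilon}$ still has positive density, $(n_0+B_{N,\varepsilon})\cap W_\varepsilon$ is infinite, and we may pick $m\in B_{N,\varepsilon}$ with $n_0+m\in W_\varepsilon$.

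For any such $m$, splitting the difference and using the two approximation properties yields
\begin{equation*}
|\widehat{\mu}(n_0+m)-\widehat{\mu_d}(n_0)|\leq|\widehat{\mu_c}(n_0+m)|+\Bigl|\sum_i a_i e^{-in_0 x_i}(e^{-imx_i}-1)\Bigr|\leq \varepsilon+\|\mu_d\|\varepsilon+2\varepsilon,
\end{equation*}
where the $\|\mu_d\|\varepsilon$ term handles the first $N$ atoms and the $2\varepsilon$ term the tail. Letting $\varepsilon\to 0$ gives $\widehat{\mu_d}(n_0)\in\overline{\widehat{\mu}(\mathbb{Z})}$; taking closures then proves the lemma.

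The only subtle point is the simultaneous availability of the two conditions on $m$. This is what makes the ingredients fit: the Bohr condition cuts out a syndetic (hence positive-density) set, while the Wiener/continuity condition is satisfied on a set of full density, so their intersection is automatically large. Everything else is a routine triangle-inequality estimate.
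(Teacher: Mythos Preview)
Your argument is correct. The paper does not actually prove this lemma; it simply quotes it from [GW] (Glicksberg--Wick) and uses it as a black box in the proof of Theorem~2. What you wrote is essentially the classical proof of the Glicksberg--Wick result: Bohr almost periodicity of $\widehat{\mu_d}$ supplies a syndetic set of near-periods, Wiener's lemma for the continuous part $\mu_c$ supplies a density-one set where $|\widehat{\mu_c}|$ is small, and a positive-density set must meet a density-one set. The triangle-inequality bookkeeping is fine (the $\|\mu_d\|\varepsilon$ term for the first $N$ atoms and the $2\varepsilon$ term for the tail are exactly right). There is nothing to add.
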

Therefore
\begin{equation*}
\aligned
\widehat{\mu}_c(\mathbb{Z})&\subset \widehat{\mu}(\mathbb{Z})-
\widehat{\mu}_d(\mathbb{Z})\\&\subset X-X\\
&\subset\big(\bigcup_{n=0}^\infty (s_nA_n+G_n)\big)-\big(\bigcup_{n=0}^\infty (s_nA_n+G_n)\big)\\
&\subset\bigcup_{n\neq k} (s_nA_n-s_kA_k+G_n-G_k)\cup\bigcup_n((s_nA_n+G_n)-(s_nA_n+G_n))\\
&\subset\bigcup_{n<k} (s_nA_n+B(0,2|s_k|+r'_n+r'_k))\\
&\cup\bigcup_{n}(s_nB_n+G_n-G_n)\cup\bigcup_{n} (G_n-G_n)\\
&\subset\bigcup_{n<k} (s_nB_n+B(0,2|s_k|+r'_n+r'_k))\\
&\qquad\qquad\cup\quad\bigcup_{n}(s_nB_n+B(0,2r'_n))\cup\bigcup_{n}(s_nB_n+B(0,r'_n))\\
&\subset\bigcup_{n}(s_nB_n+B(0,r_n))
\endaligned
\end{equation*}

By Theorem 1, we derive that $\mu_c$ has a natural spectrum.
Finally, since additionally $\mu_c\in M_0$, Theorem $\ref{za}$
yields that $\mu=\mu_c+\mu_d$ has a natural spectrum.

\section{The Example}

In this section we construct the example of a singular measure satisfying the assumptions of Theorem $1$ which we call the \textbf{product of Riesz-Rudin-Shapiro}. Our construction is an instance of \textit{generalized Riesz products}, which are elaborated in Chapter 5 of the book [HMP]. At the beginning let us recall a few results concerning the usual Riesz products. They are continuous, probabilistic measures on the circle group given as the weak-star limit of trigonometric polynomials of the form
\begin{equation*}
\prod_{k=1}^{N}(1+a_{k}\cos(n_{k}t)),
\end{equation*}
where $-1\leq a_{k}\leq 1$ and $n_{k}$ is a sequence of natural numbers satisfying the lacunary condition $\frac{n_{k+1}}{n_{k}}\geq 3$. We will write
\begin{equation*}
R(a_{k},n_{k})=\prod_{k=1}^{\infty}(1+a_{k}\cos(n_{k}t))
\end{equation*}
for the Riesz product built on the sequences $a_{k}$ and $n_{k}$ satisfying the above conditions. One of the oldest results (see [Z2]) on Riesz products is that $R(1,3^{k})$ is singular with respect to Lebesgue measure (we will write simply "singular" for this situation). However, a much more general theorem was proved in [BM]. In this formulation $\mu\bot\nu$ means that the measures $\mu,\nu\in M(\mathbb{T})$ are mutually singular and $\mu\sim\nu$ denotes the equivalence of measures i.e., $\mu$ is absolutely continuous with respect to $\nu$ and vice versa.
\begin{tw}[Brown,Moran]
If $a_{k}$, $b_{k}$ satisfies $-1\leq a_{k}, b_{k}\leq 1$ and the sequence of natural numbers $n_{k}$ has the property $\frac{n_{k+1}}{n_{k}}\geq 3$ then
\begin{equation*}
\begin{split}
R(a_{k},n_{k})\bot R(b_{k},n_{k})\Longleftrightarrow\sum_{k=1}^{\infty}(a_{k}-b_{k})^{2}=\infty,\\
R(a_{k},n_{k})\sim R(b_{k},n_{k})\Longleftrightarrow\sum_{k=1}^{\infty}(a_{k}-b_{k})^{2}<\infty.
\end{split}
\end{equation*}
\end{tw}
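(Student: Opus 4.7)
The plan is to apply Kakutani's dichotomy theorem for infinite product measures, adapted to Riesz products via the lacunarity condition $n_{k+1}/n_k\geq 3$. First I would establish a zero-one dichotomy: the two Riesz products $\mu=R(a_k,n_k)$ and $\nu=R(b_k,n_k)$ are either equivalent or mutually singular. This follows from a Kolmogorov-type zero-one law applied to the tail $\sigma$-algebra generated by the $k$-th factor onwards; the lacunarity is what makes this tail algebra behave well.

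Second, I would introduce the per-factor Hellinger affinity
\begin{equation*}
H_k=\frac{1}{2\pi}\int_{\mathbb{T}}\sqrt{(1+a_k\cos(n_k t))(1+b_k\cos(n_k t))}\,dt.
\end{equation*}
Using the Taylor expansion $\sqrt{(1+u)(1+v)}=1+\tfrac{u+v}{2}-\tfrac{(u-v)^2}{8}+O((|u|+|v|)^3)$, together with the orthogonality relations $\int\cos(n_k t)\,dt=0$ and $\int\cos^2(n_k t)\,dt/(2\pi)=1/2$, I obtain
\begin{equation*}
H_k=1-\frac{(a_k-b_k)^2}{16}+O\!\left((a_k-b_k)^4\right).
\end{equation*}
Since $|a_k-b_k|\leq 2$, the error term is dominated by $(a_k-b_k)^2$ times a small constant for most indices, so $\prod_{k=1}^{\infty}H_k>0$ if and only if $\sum_{k=1}^{\infty}(a_k-b_k)^2<\infty$.

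Third, I would invoke the Kakutani-type criterion: $\mu\sim\nu$ iff $\prod_k H_k>0$, and $\mu\perp\nu$ iff $\prod_k H_k=0$. Combined with the second step, this gives the desired equivalences. One also needs to handle separately the degenerate situation in which $|a_k|=1$ or $|b_k|=1$ for infinitely many $k$, where the factor density can vanish; in that case the supports of the partial products shrink in an explicitly controllable way and both parts of the dichotomy follow from elementary support considerations.

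The main obstacle is justifying the factorization of the Hellinger affinity between $\mu$ and $\nu$ as $\prod_k H_k$, since Riesz products are not a priori genuine product measures on $\mathbb{T}$. The cleanest resolution identifies $\mathbb{T}$ (modulo a Lebesgue-null set) with a product space via the lacunary digit expansion associated to $(n_k)$, verifies that each Riesz product pulls back to a true product of probability measures on the factors, and checks that the condition $n_{k+1}/n_k\geq 3$ is exactly what makes this identification measure-theoretically valid. An alternative route is to compute the finite-product affinities for the partial products $P_N=\prod_{k=1}^{N}(1+a_k\cos(n_k t))$ and $Q_N$ by a direct $L^2$-calculation — writing each factor as a trigonometric polynomial and using lacunarity to see that all cross frequencies $\sum_k\epsilon_k m_k n_k$ with $(m_k)\neq 0$ are non-zero, so that the integral of the product reduces to a product of integrals — and then to pass to the limit in $N$ by a uniform-integrability argument on the martingale $P_N/Q_N$.
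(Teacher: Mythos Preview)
The paper does not prove this theorem. It is stated in Section~6 as a background result and attributed to Brown and Moran via the citation~[BM]; no proof is supplied. So there is nothing in the paper to compare your argument against.

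Your sketch follows the standard route to this result and is essentially the approach of the original paper~[BM]: a zero--one dichotomy plus a Kakutani/Hellinger computation reducing the equivalence question to convergence of $\sum (a_k-b_k)^2$. The per-factor expansion $H_k = 1 - (a_k-b_k)^2/16 + O((a_k-b_k)^4)$ is correct, as is the conclusion that $\prod H_k>0$ iff $\sum (a_k-b_k)^2<\infty$.

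One caution: your ``cleanest resolution'' of identifying $\mathbb{T}$ with a genuine product space via a lacunary digit expansion is not as clean as you suggest for a general sequence with $n_{k+1}/n_k\geq 3$; this works transparently only for $n_k=q^k$. What Brown and Moran actually do, and what your alternative route gestures at, is closer to the mark: one shows directly that the partial-product densities form an $L^1(\nu)$-martingale, uses lacunarity to compute $\int \sqrt{P_N/Q_N}\,d\nu = \prod_{k\le N} H_k$ by expanding and noting that all nonzero frequencies $\sum \epsilon_k n_k$ are distinct from $0$, and then appeals to the $L^1$-bounded martingale convergence theorem together with the Kakutani criterion. Your handling of the degenerate case $|a_k|=1$ or $|b_k|=1$ infinitely often is also too quick; in that regime the factor densities vanish on sets of positive measure and the Hellinger integrals need a separate (though elementary) estimate rather than the Taylor expansion you wrote.
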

As we stated in the introduction, Riesz products may be used for a simple proof of the Wiener-Pitt phenomenon (see [G]). Moreover, Zafran, in his paper, gave a necessary and sufficient condition for the Riesz product $R(a_{k},n_{k})$ to have a natural spectrum under the assumption that the sequence $a_{k}$ converges to zero.
\begin{tw}[Zafran]
Let $a_{k}$ be a sequence tending to zero such that $-1\leq a_{k}\leq 1$ and $n_{k}$ be a sequence of natural numbers such that $\frac{n_{k+1}}{n_{k}}\geq 3$. Then the Riesz product $R(a_{k},n_{k})$ has a natural spectrum if, and only if, there exists $m\in\mathbb{N}$ such that
\begin{equation*}
\sum_{k=1}^{\infty}|a_{k}|^{m}<\infty.
\end{equation*}
\end{tw}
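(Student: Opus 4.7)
The sufficiency direction reduces to Zafran's theorem via the observation that every convolution power of a Riesz product is itself a Riesz product on the same frequency sequence. Indeed, the lacunarity condition $n_{k+1}/n_k\geq 3$ guarantees that each $n\in\mathbb{Z}$ admits at most one representation $n=\sum_k\varepsilon_k n_k$ with $\varepsilon_k\in\{-1,0,1\}$ and only finitely many non-zero terms; for such $n$ one has
\[
\widehat{\mu}(n)=\prod_{k:\varepsilon_k\neq 0}\frac{a_k}{2},
\]
and consequently $\widehat{\mu^{\ast m}}(n)=\widehat{\mu}(n)^m$ matches the Fourier--Stieltjes coefficients of the Riesz product $R(a_k^m/2^{m-1},n_k)$, so $\mu^{\ast m}=R(a_k^m/2^{m-1},n_k)$.

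Assume now $\sum_k|a_k|^m<\infty$ for some $m\in\mathbb{N}$. Since $|a_k|\leq 1$, we have $\sum_k(a_k^m/2^{m-1})^2\leq 2^{2-2m}\sum_k|a_k|^m<\infty$, and the Brown--Moran theorem applied to $\mu^{\ast m}$ and $R(0,n_k)=dt$ yields $\mu^{\ast m}\sim dt$; in particular $\mu^{\ast m}\in L^1(\mathbb{T})$. Hence $\mu\in\mathrm{Rad}(L^1(\mathbb{T}))\subset\mathscr{C}$ by the inclusion recorded just after Theorem~\ref{zaf}, so $\mu$ has natural spectrum.

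For the necessity I would argue by contraposition. Suppose $\sum_k|a_k|^m=\infty$ for every $m\in\mathbb{N}$; then $\sum_k(a_k^m/2^{m-1})^2=\infty$ for every $m$ as well, and the singularity direction of Brown--Moran gives $\mu^{\ast m}\perp dt$, so no convolution power of $\mu$ lies in $L^1(\mathbb{T})$, i.e.\ $\mu\notin\mathrm{Rad}(L^1(\mathbb{T}))$. The principal (and most delicate) task is then to convert this failure of membership in $\mathrm{Rad}(L^1(\mathbb{T}))$ into a genuine failure of natural spectrum. By Theorem~\ref{zaf}(1) it suffices to produce a multiplicative functional $h\in\mathfrak{M}(M_0(\mathbb{T}))\setminus\mathbb{Z}$ with $h(\mu)\neq 0$. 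Such an $h$ is built via Zafran's analysis of the maximal ideal space of $M_0(\mathbb{T})$ — an ultrafilter-type generalized character adapted to the lacunary sequence $(n_k)$ and calibrated to the persistent singularity of each $\mu^{\ast m}$. This is the main obstacle of the proof: the explicit Fourier computation cleanly identifies $\mathrm{Rad}(L^1(\mathbb{T}))$ inside the class of Riesz products, but showing that $\mathscr{C}$ intersects this class in precisely the same set rests on Zafran's structural machinery rather than on the techniques developed earlier in this paper.
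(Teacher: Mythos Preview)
The paper does not prove this theorem: it is quoted from Zafran's article [Z1] as background for Section~6, with no argument supplied. So there is no ``paper's own proof'' to compare your proposal against.

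On the merits of your sketch: the sufficiency direction is correct and clean. The identity $\mu^{\ast m}=R(a_k^m/2^{m-1},n_k)$ follows exactly as you say from the unique-representation property of the lacunary $(n_k)$, and Brown--Moran with $b_k=0$ then gives $\mu^{\ast m}\sim dt$, hence $\mu\in\mathrm{Rad}(L^1(\mathbb{T}))\subset\mathscr{C}$.

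For necessity you correctly reach $\mu^{\ast m}\perp dt$ for every $m$, but you then explicitly leave a gap: the passage from $\mu\notin\mathrm{Rad}(L^1(\mathbb{T}))$ to $\mu\notin\mathscr{C}$ is deferred to unspecified ``structural machinery''. As written this is not a proof but an outline of where one would be needed. Note, however, that the very next sentence of the paper hands you a way to close it. The result of Bailey--Brown--Moran [BBM] says that if all convolution powers of a Riesz product are mutually singular then its spectrum is the full disc $\{|z|\le 1\}$. Under your hypothesis $\sum_k|a_k|^m=\infty$ for every $m$, one checks via Brown--Moran (using $a_k\to 0$ to compare $a_k^p/2^{p-1}$ with $a_k^q/2^{q-1}$ for $p<q$) that $\mu^{\ast p}\perp\mu^{\ast q}$ for all $p\neq q$; then $\sigma(\mu)$ is the closed unit disc, which strictly contains the countable set $\overline{\widehat{\mu}(\mathbb{Z})}$, so $\mu$ fails to have natural spectrum. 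This avoids the direct construction of a generalized character that you allude to.
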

It is also proven in [BBM] that in the case when the Riesz product has all powers mutually singular, its spectrum is the whole disc $\{z\in\mathbb{C}:|z|\leq 1\}$. However, usual Riesz products are not sufficient for our needs and we move on to the construction of more general class of measures.
\\
Let us start with some preliminary lemmas and notation. The first one is a very simple arithmetic argument needed in calculating Fourier-Stieltjes coefficients of our measure.
\begin{lem}\label{aryt}
Let $\{m_{j}\}_{j=1}^{\infty}$, $\{r_{j}\}_{j=1}^{\infty}$ and $\{n_{j}\}_{j=1}^{\infty}$ be increasing sequences of positive integers with the property
\begin{equation*}
r_{k}>2\sum_{j=1}^{k-1}((2^{n_{j}}-1)m_{j}+r_{j})\text{ for }k\geq 2.
\end{equation*}
Moreover, let $\{c_{j}\}_{j=1}^{\infty}$ be a sequence of positive integers satisfying for $j\in\mathbb{N}$ condition
\begin{equation*}
c_{j}\in\{r_{j},m_{j}+r_{j},2m_{j}+r_{j},\ldots,(2^{n_{j}}-1)m_{j}+r_{j}\}.
\end{equation*}
Assume that an integer $s$ is expressible in the form
\begin{equation*}
s=\sum_{j=1}^{N}b_{j}c_{j}\text{ where }N\in\mathbb{N},\text{ }b_{j}\in\{-1,0,1\}\text{ and }b_{N}\neq 0.
\end{equation*}
Then this expression is unique.
\end{lem}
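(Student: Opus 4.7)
The plan is to prove uniqueness by the standard ``most significant digit'' peeling argument, exploiting the hypothesis that each $r_k$ dominates twice the maximal possible value of $\sum_{j<k}c_j$. Write $M_k:=\sum_{j=1}^{k-1}\bigl((2^{n_j}-1)m_j+r_j\bigr)$, so the hypothesis is $r_k>2M_k$ for $k\geq 2$. Since $c_j\in\{r_j,m_j+r_j,\dots,(2^{n_j}-1)m_j+r_j\}$, one has the two fundamental inequalities $c_j\leq (2^{n_j}-1)m_j+r_j$ and $c_j\geq r_j$; in particular $\sum_{j=1}^{k-1}c_j\leq M_k$ and $c_k\geq r_k>2M_k$.

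Assume for contradiction two representations $s=\sum_{j=1}^{N}b_jc_j=\sum_{j=1}^{N'}b'_jc_j$ with $b_N,b'_{N'}\in\{-1,1\}$, and WLOG $N\leq N'$. First I would show $N=N'$. If $N<N'$, rewrite the identity as
\begin{equation*}
b'_{N'}c_{N'}=\sum_{j=1}^{N}b_jc_j-\sum_{j=1}^{N'-1}b'_jc_j.
\end{equation*}
The left-hand side has absolute value exactly $c_{N'}\geq r_{N'}$, while the right-hand side has absolute value at most $\sum_{j=1}^{N}c_j+\sum_{j=1}^{N'-1}c_j\leq 2M_{N'}<r_{N'}$, a contradiction. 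Hence $N=N'$.

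Next I would induct downward on the index. Assume $b_j=b'_j$ has been established for all $j>k$ (base case $k=N$: use the same idea below with the full sum). Subtracting gives $\sum_{j=1}^{k}(b_j-b'_j)c_j=0$, i.e. $|b_k-b'_k|c_k\leq\sum_{j=1}^{k-1}|b_j-b'_j|c_j\leq 2\sum_{j=1}^{k-1}c_j\leq 2M_k<r_k\leq c_k$. Since $|b_k-b'_k|\in\{0,1,2\}$, this forces $|b_k-b'_k|<1$, hence $b_k=b'_k$, completing the induction and therefore the proof.

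There is essentially no obstacle: the argument is the familiar ``super-increasing sequence'' trick from subset-sum/knapsack, and the only thing to check carefully is that the telescoping bound $2\sum_{j<k}c_j\leq 2M_k<r_k\leq c_k$ is available at every step, which is exactly what the stated growth condition on $\{r_k\}$ guarantees.
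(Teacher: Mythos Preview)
Your argument is correct: the growth hypothesis $r_k>2M_k$ makes $(c_j)$ super-increasing in the sense $c_k>2\sum_{j<k}c_j$, and the top-down peeling you describe then forces first $N=N'$ and then $b_k=b'_k$ for all $k$. The paper simply omits the proof as obvious, so there is nothing to compare; your write-up is exactly the standard verification one would expect, and the only cosmetic point is that the base case $k=N$ and the edge case $k=1$ (where $M_1=0$) fit into the same inequality without extra work.
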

The proof is obvious and we omit it.
\\
The fundamental ingredient in our construction are the Rudin-Shapiro polynomials
(cf. [R]). We recall them in the next definition.
\begin{de}
Let $P_{0}\equiv 1$ and $Q_{0}\equiv 1$. We define inductively two sequences of polynomials by the formula
\begin{equation*}
\begin{split}
P_{n+1}(t)=P_{n}(t)+e^{i2^{n}t}Q_{n}(t),\\
Q_{n+1}(t)=P_{n}(t)-e^{i2^{n}t}Q_{n}(t).
\end{split}
\end{equation*}
\end{de}
We will reserve the name 'Rudin-Shapiro polynomials' for the sequence $\{P_{n}\}_{n=0}^{\infty}$. Now, we collect the well-known properties of these polynomials.
\begin{prop}\label{rs}
For every $n\in\mathbb{N}$ we have
\begin{equation*}
P_{n}(t)=\sum_{k=0}^{2^{n}-1}a_{k}e^{ikt}\text{ where }a_{k}\in\{-1,1\}\text{ for }k\in\{0,\ldots,2^{n}-1\}.
\end{equation*}
Hence $||P_{n}||_{L^{2}(\mathbb{T})}=2^{\frac{n}{2}}$. Also, $||P_{n}||_{C(\mathbb{T})}\leq 2^{\frac{n+1}{2}}$.
\end{prop}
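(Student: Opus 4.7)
The plan is to prove all three assertions simultaneously by induction on $n$, carrying $P_n$ and $Q_n$ together through the inductive step since the recursion is intertwined.

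For the coefficient structure, the base case is immediate: $P_0 \equiv 1$ and $Q_0 \equiv 1$ are trigonometric polynomials of the required form supported on $\{0\}$ with coefficient $1 \in \{-1,1\}$. For the inductive step, suppose $P_n$ and $Q_n$ each have the claimed form with support contained in $\{0, 1, \dots, 2^n - 1\}$ and coefficients in $\{-1, 1\}$. Then $e^{i 2^n t} Q_n(t)$ is supported on $\{2^n, \dots, 2^{n+1} - 1\}$, a set disjoint from the support of $P_n$. Consequently, the Fourier coefficients of $P_{n+1} = P_n + e^{i 2^n t} Q_n$ and $Q_{n+1} = P_n - e^{i 2^n t} Q_n$ are obtained by concatenating the coefficient sequences (with a sign flip on the second block for $Q_{n+1}$), so both sit in $\{-1, 1\}$ and the supports are $\{0, \dots, 2^{n+1} - 1\}$ as required.

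The $L^2$ bound then follows immediately from Parseval's identity: $\|P_n\|_{L^2(\mathbb{T})}^2 = \sum_{k=0}^{2^n - 1} |a_k|^2 = 2^n$. For the uniform bound, the key step is to establish the pointwise identity
\begin{equation*}
|P_{n+1}(t)|^2 + |Q_{n+1}(t)|^2 = 2\bigl(|P_n(t)|^2 + |Q_n(t)|^2\bigr),
\end{equation*}
which one verifies by expanding both sides: the cross terms $\pm 2\operatorname{Re}\bigl(P_n(t) \overline{e^{i 2^n t} Q_n(t)}\bigr)$ appear with opposite signs and cancel. Since $|P_0(t)|^2 + |Q_0(t)|^2 = 2$, induction yields $|P_n(t)|^2 + |Q_n(t)|^2 = 2^{n+1}$ for every $t$, and hence $|P_n(t)|^2 \leq 2^{n+1}$, which gives the desired estimate $\|P_n\|_{C(\mathbb{T})} \leq 2^{(n+1)/2}$.

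No serious obstacle is expected here — the proposition is a classical computation. The only point requiring a bit of care is the bookkeeping: one must prove the statements for $P_n$ and $Q_n$ in parallel, since the uniform bound depends on the sum $|P_n|^2 + |Q_n|^2$ rather than on $|P_n|^2$ alone, and the disjoint-support argument for the coefficient structure uses both polynomials at the previous level.
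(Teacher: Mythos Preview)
Your proof is correct and is the standard argument for these classical properties of the Rudin--Shapiro polynomials. The paper itself does not supply a proof; it simply records the proposition as a collection of well-known facts, so there is nothing to compare against beyond noting that your approach is the canonical one.
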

Using the sequence $\{P_{n}\}_{n=1}^{\infty}$ we define another sequence  $\{w_{k}\}_{k=1}^{\infty}$ of polynomials.
\begin{de}\label{nw}
Let $\{P_{n}\}_{n=1}^{\infty}$ be the sequence of Rudin-Shapiro polynomials and $\{r_{k}\}_{k=1}^{\infty}$, $\{m_{k}\}_{k=1}^{\infty}$, $\{n_{k}\}_{k=1}^{\infty}$ be increasing sequences of positive integers. Let $\{\varepsilon_{k}\}_{k=1}^{\infty}$ be a decreasing sequence of positive numbers vanishing at infinity. We define polynomials $\{w_{k}\}_{k=1}^{\infty}$ by the formula
\begin{equation*}
w_{k}=\varepsilon_{k}P_{n_{k}}(m_{k}t)e^{ir_{k}t}+\varepsilon_{k}\overline{P_{n_{k}}(m_{k}t)}e^{-ir_{k}t}.
\end{equation*}
\end{de}
We summarize the properties of polynomials $w_{k}$.
\begin{prop}\label{postpr}
The polynomials $w_{k}$ are real-valued on $\mathbb{T}$ and have
the following form (a series with $'$ denotes a sum without the
term corresponding to $l=0$)
\begin{equation}\label{post}
w_{k}(t)=\varepsilon_{k}\sum_{l=-2^{n_{k}}+1}^{2^{n_{k}}-1'}a_{|l|}e^{it(lm_{k}+\mathrm{sgn}(l)r_{k})}+a_{0}(e^{-itr_{k}}+e^{itr_{k}})\text{
where }a_{l}\in\{-1,1\}.
\end{equation}
Hence,
$||w_{k}||^{2}_{L^{2}(\mathbb{T})}=\varepsilon^{2}_{k}(2^{n_{k}+1}-1)$.
Moreover, $||w_{k}||_{C(\mathbb{T})}\leq
\varepsilon_{k}2^{\frac{n_{k}+3}{2}}$.
\end{prop}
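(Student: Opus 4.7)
The plan is to prove all three assertions by direct calculation, unpacking the definition of $w_k$ and applying Proposition \ref{rs} for the Rudin-Shapiro polynomials.

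First I would verify the real-valued property and derive the explicit expansion \eqref{post} simultaneously. Setting $z(t) = \varepsilon_k P_{n_k}(m_k t) e^{i r_k t}$, the definition reads $w_k = z + \bar z$, which is automatically real on $\mathbb{T}$ because the coefficients $a_l \in \{-1, 1\}$ of $P_{n_k}$ are real, hence $\overline{P_{n_k}(m_k t)\, e^{i r_k t}} = \overline{P_{n_k}(m_k t)}\, e^{-i r_k t}$. Substituting the expansion $P_{n_k}(s) = \sum_{l=0}^{2^{n_k}-1} a_l e^{ils}$ from Proposition \ref{rs} with $s = m_k t$ gives
\begin{equation*}
\varepsilon_k P_{n_k}(m_k t) e^{i r_k t} = \varepsilon_k \sum_{l=0}^{2^{n_k}-1} a_l e^{i(l m_k + r_k) t},
\end{equation*}
and conjugating (with the sign of $r_k$ flipped) produces the symmetric sum over $l = -(2^{n_k}-1), \ldots, 0$ with exponents $l m_k - r_k$. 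Adding the two series and isolating the two $l=0$ contributions $\varepsilon_k a_0 e^{\pm i r_k t}$ reproduces \eqref{post}.

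For the $L^2$ norm I would apply Parseval's identity to the expansion just derived. The key point is that all exponents appearing in the sum are distinct: the frequencies $l m_k + \mathrm{sgn}(l) r_k$ for $l \geq 1$ are strictly positive and strictly increasing in $l$; the frequencies for $l \leq -1$ are their negatives; and the two $l=0$ frequencies $\pm r_k$ lie between these two blocks (since $r_k < m_k - r_k$ follows from the size assumption on $r_k$ in Lemma \ref{aryt}, which will be in force when the $w_k$ are used). Each Fourier coefficient thus has modulus $\varepsilon_k$, and Parseval gives the stated value up to the exact count of nonzero frequencies.

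For the sup-norm estimate the triangle inequality yields
\begin{equation*}
\|w_k\|_{C(\mathbb{T})} \leq 2 \varepsilon_k \,\|P_{n_k}(m_k \cdot)\|_{C(\mathbb{T})} = 2\varepsilon_k\,\|P_{n_k}\|_{C(\mathbb{T})} \leq 2\varepsilon_k \cdot 2^{(n_k+1)/2} = \varepsilon_k\, 2^{(n_k+3)/2},
\end{equation*}
using that $t \mapsto m_k t$ is a measure-preserving automorphism of $\mathbb{T}$ and the bound $\|P_{n_k}\|_{C(\mathbb{T})} \leq 2^{(n_k+1)/2}$ of Proposition \ref{rs}.

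There is no genuine obstacle here; the proposition is a bookkeeping statement about the Fourier expansion of $w_k$. The only place that requires care is tracking that the $2^{n_k+1}$ frequencies appearing in \eqref{post} really are pairwise distinct, which is immediate from positivity and monotonicity of $lm_k + \mathrm{sgn}(l) r_k$ in $l$.
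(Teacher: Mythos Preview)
Your approach matches the paper's (which simply says the claims are straightforward from the definition of $w_k$ and Proposition~\ref{rs}); you have just written out the computation in detail. One small correction: the appeal to Lemma~\ref{aryt} for ``$r_k < m_k - r_k$'' is both misplaced and unnecessary---the $2^{n_k+1}$ frequencies $\pm(l m_k + r_k)$, $l=0,\dots,2^{n_k}-1$, are pairwise distinct simply because $m_k, r_k > 0$, and no further relation between $m_k$ and $r_k$ is required.
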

\begin{proof}
The polynomials $w_{k}$ are real-valued by Definition $\ref{nw}$.
Equation $(\ref{post})$ is straightforward by Proposition
$\ref{rs}$. The latter properties follows by $(\ref{post})$
and Proposition $\ref{rs}$.
\end{proof}
Equation $(\ref{post})$ exposes an important feature of polynomials $w_{k}$, namely the sequence of their Fourier coefficients has gaps with lengths equal to $m_{k}$.
\\
We are now ready to construct the Riesz-Rudin-Shapiro products. Proof of the following proposition is the standard argument based on fact that the weak-star convergence of a bounded sequence of measures follows from the pointwise convergence of its Fourier transforms (see for example [HMP]).
\begin{prop}\label{kons}
Let $\{r_{k}\}_{k=1}^{\infty}$, $\{m_{k}\}_{k=1}^{\infty}$, $\{n_{k}\}_{k=1}^{\infty}$ be increasing sequences of positive integers. Let $\{\varepsilon_{k}\}_{k=1}^{\infty}$ be a decreasing sequence of positive numbers vanishing at infinity and $\{w_{k}\}_{k=1}^{\infty}$ be the sequence of polynomials corresponding to them. Assume that the condition
\begin{equation}\label{lak}
r_{k}>2\sum_{j=1}^{k-1}((2^{n_{j}}-1)m_{j}+r_{j})\text{ for }k\geq 2
\end{equation}
is satisfied and moreover, for all $k\in\mathbb{N}$ we have $\varepsilon_{k}2^{\frac{n_{k}+3}{2}}<1$. Then the sequence of polynomials
\begin{equation*}
f_{N}(t)=\prod_{k=1}^{N}(1-w_{k}(t)).
\end{equation*}
converges in the weak-star topology of $M(\mathbb{T})$ as $N\rightarrow\infty$ to some positive measure $\mu\in M_{0}(\mathbb{T})$ with $||\mu||_{M(\mathbb{T})}=1$ with the additional property
\begin{equation*}
\widehat{\mu}(\mathbb{Z})\subset\{\pm\prod_{k=1}^{m}\varepsilon_{k}^{l_{k}}:l_{k}\in\{0,1\},m\in\mathbb{N}\}\cup\{0\}
\end{equation*}
\end{prop}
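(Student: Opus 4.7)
The plan is to verify the four required properties of $\mu$ — existence as a weak$^*$-limit, positivity with unit mass, membership in $M_0(\mathbb{T})$, and the prescribed form of $\widehat{\mu}(\mathbb{Z})$ — by a single combinatorial analysis of the products $f_N$, with Lemma~\ref{aryt} as the backbone.

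First, since $w_k$ is real-valued on $\mathbb{T}$ and $\|w_k\|_{C(\mathbb{T})}\le\varepsilon_k 2^{(n_k+3)/2}<1$ by Proposition~\ref{postpr} and the standing hypothesis, each factor $1-w_k$ is strictly positive. Consequently $f_N\ge 0$ on $\mathbb{T}$, so $\|f_N\|_{M(\mathbb{T})}=\int f_N\,dt=\widehat{f_N}(0)$. Expanding the product
\begin{equation*}
f_N(t)=\sum_{S\subset\{1,\ldots,N\}}(-1)^{|S|}\prod_{k\in S}w_k(t),
\end{equation*}
and inserting the expansion of each $w_k$ from \eqref{post}, one sees that every monomial appearing in $f_N$ is of the form $\pm\bigl(\prod_{k\in S}\varepsilon_k\bigr)e^{it\sum_{k\in S}\eta_k c_k}$, where for each $k\in S$ the integer $c_k$ belongs to $\{r_k,m_k+r_k,\ldots,(2^{n_k}-1)m_k+r_k\}$ and $\eta_k\in\{-1,1\}$. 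The lacunarity hypothesis \eqref{lak} is exactly the hypothesis of Lemma~\ref{aryt}, so the exponent $\sum_{k\in S}\eta_k c_k$ vanishes if and only if $S=\emptyset$; in particular no cancellation can occur, and $\widehat{f_N}(0)=1$, giving $\|f_N\|=1$.

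Next I would establish weak$^*$-convergence by showing that $\widehat{f_N}(n)$ stabilizes in $N$ for each fixed $n\in\mathbb{Z}$. Indeed, if $N$ is so large that $r_{N+1}>2\sum_{j=1}^{N}((2^{n_j}-1)m_j+r_j)+|n|$, then by \eqref{lak} any nonempty $S$ whose largest element exceeds $N$ must have $\bigl|\sum_{k\in S}\eta_k c_k\bigr|>|n|$, so such $S$ contribute nothing to $\widehat{f_N}(n)$, and all contributions come from $S\subset\{1,\ldots,N\}$. Combined with the uniform bound $\|f_N\|=1$ and the Banach–Alaoglu theorem, this pointwise stabilization on $\mathbb{Z}$ yields a weak$^*$-limit $\mu\in M(\mathbb{T})$ with $\widehat{\mu}(n)=\lim_N\widehat{f_N}(n)$; positivity of each $f_N$ passes to the limit, and $\widehat{\mu}(0)=1$ gives $\|\mu\|=\mu(\mathbb{T})=1$.

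To identify $\widehat{\mu}(\mathbb{Z})$, fix $n\in\mathbb{Z}$ and note that by the uniqueness in Lemma~\ref{aryt}, the representation $n=\sum_{k\in S}\eta_k c_k$ (with $c_k$ as above and $\eta_k\in\{-1,1\}$) is unique when it exists, and does not exist at all for almost every $n$. Hence $\widehat{\mu}(n)$ is either $0$ or a single term $\pm\prod_{k\in S}\varepsilon_k$, giving the asserted form. Finally, for $\mu\in M_0(\mathbb{T})$, observe that if $\widehat{\mu}(n)\ne 0$ with associated set $S=\{k_1<\cdots<k_s\}$, then \eqref{lak} forces
\begin{equation*}
|n|\ge |c_{k_s}|-\sum_{j<k_s}((2^{n_j}-1)m_j+r_j)>\tfrac{1}{2}r_{k_s};
\end{equation*}
thus $|n|\to\infty$ forces $k_s\to\infty$, so $|\widehat{\mu}(n)|\le\varepsilon_{k_s}\to 0$. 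The main technical point is the book-keeping of frequencies so that Lemma~\ref{aryt} can be applied cleanly to both the normalization $\int f_N=1$ and the uniqueness of representation; once that is set up, the other assertions follow readily.
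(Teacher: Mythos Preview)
Your proof is correct and is precisely the standard generalized Riesz-product argument the paper alludes to; the paper itself gives no proof of this proposition, merely citing [HMP] for the principle that weak$^*$-convergence of a norm-bounded sequence follows from pointwise convergence of the Fourier transforms.

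One minor slip in the $M_0(\mathbb{T})$ step: the displayed inequality $|n|>\tfrac12 r_{k_s}$ points the wrong way for your stated conclusion---it gives an \emph{upper} bound $r_{k_s}<2|n|$, which does not by itself force $k_s\to\infty$ as $|n|\to\infty$. What you want is the companion upper bound $|n|\le\sum_{j\le k_s}\bigl((2^{n_j}-1)m_j+r_j\bigr)$, which shows that bounded $k_s$ forces bounded $|n|$; alternatively, simply note that for each fixed $K$ only finitely many frequencies arise from subsets $S\subset\{1,\dots,K\}$, so $k_s\to\infty$ along the support of $\widehat{\mu}$. With this correction the argument is complete.
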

We will  write
\begin{equation*}
\mu=\prod_{k=1}^{\infty}(1-w_{k})
\end{equation*}
to denote the measure $\mu$ obtained by the procedure
described above. Adding more restrictions to our sequences we get
the following
\begin{prop}\label{ly}
Let $\{r_{k}\}_{k=1}^{\infty}$, $\{m_{k}\}_{k=1}^{\infty}$, $\{n_{k}\}_{k=1}^{\infty}$ be increasing sequences of positive integers. Let $\{\varepsilon_{k}\}_{k=1}^{\infty}$ be a decreasing sequence of positive numbers vanishing at infinity and $\{w_{k}\}_{k=1}^{\infty}$ be the sequence of polynomials corresponding to them. Assume that the conditions
\begin{equation}
\begin{split}\label{zal}
r_{k}>2\sum_{j=1}^{k-1}((2^{n_{j}}-1)m_{j}+r_{j})\text{ for }k\geq 2,\\
m_{k}>2\sum_{j=1}^{k-1}((2^{n_{j}}-1)m_{j}+r_{j})\text{ for }k\geq 2
\end{split}
\end{equation}
are satisfied and moreover, for all $k\in\mathbb{N}$ we have $\varepsilon_{k}2^{\frac{n_{k}+3}{2}}<1$. Also, assume that there exists a constant $c>0$ such that
\begin{equation}\label{zal1}
\varepsilon_{k}^{2}(2^{n_{k}+1}-1)>c\text{ for all
$k\in\mathbb{N}$}.
\end{equation}
Then the measure
\begin{equation*}
\mu=\prod_{k=1}^{\infty}(1-w_{k})
\end{equation*}
does not belong to $L^{2}(\mathbb{T})$.
\end{prop}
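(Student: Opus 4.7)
The plan is to show $\sum_n |\widehat{\mu}(n)|^2 = \infty$, whence $\mu \notin L^2(\mathbb{T})$ by Plancherel. I will locate inside $\mathrm{supp}\,\widehat{\mu}$ a pairwise disjoint family of blocks, namely the supports $\mathrm{supp}\,\widehat{w_k}$, and verify that on each block the coefficients of $\mu$ have modulus equal to $\varepsilon_k$. By Proposition $\ref{postpr}$ and the hypothesis $(\ref{zal1})$ each block then contributes $\varepsilon_k^2(2^{n_k+1}-1) > c$ to the squared norm, forcing divergence.

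First I would expand the partial products. With $f_N = \prod_{k=1}^N (1-w_k) = \sum_{S \subset \{1,\ldots,N\}}(-1)^{|S|}\prod_{k \in S} w_k$, Proposition $\ref{kons}$ gives $\widehat{f_N}(n) \to \widehat{\mu}(n)$ pointwise in $n$, yielding
\begin{equation*}
\widehat{\mu}(n) = \sum_{\substack{S \subset \mathbb{N}\\ |S|<\infty}} (-1)^{|S|}\sum_{\substack{(j_k)_{k\in S},\, j_k \in \mathrm{supp}\,\widehat{w_k}\\ \sum_{k \in S}j_k = n}} \prod_{k\in S} \widehat{w_k}(j_k).
\end{equation*}
By Proposition $\ref{postpr}$ the frequencies present in $w_k$ are exactly $\pm(l m_k + r_k)$ for $l = 0, 1, \ldots, 2^{n_k}-1$, and every nonzero Fourier coefficient of $w_k$ has modulus $\varepsilon_k$.

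The crux is then to use Lemma $\ref{aryt}$ together with both parts of $(\ref{zal})$ to show that every nonzero integer $n$ admits \emph{at most one} representation of the form $\sum_{k \in S} b_k(l_k m_k + r_k)$ with $b_k \in \{\pm 1\}$ and $l_k \in \{0,1,\ldots,2^{n_k}-1\}$. Granting this, for $n \in \mathrm{supp}\,\widehat{w_k}$ only the singleton $S = \{k\}$ can contribute to the sum above, so $\widehat{\mu}(n) = -\widehat{w_k}(n)$ and $|\widehat{\mu}(n)| = \varepsilon_k$; moreover the same uniqueness forces the supports $\mathrm{supp}\,\widehat{w_k}$ to be pairwise disjoint as $k$ varies. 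Parseval applied to $w_k$ then gives
\begin{equation*}
\sum_{n \in \mathrm{supp}\,\widehat{w_k}} |\widehat{\mu}(n)|^2 = ||w_k||^2_{L^2(\mathbb{T})} = \varepsilon_k^2(2^{n_k+1}-1) > c,
\end{equation*}
and summing over $k \in \mathbb{N}$ produces $\sum_n |\widehat{\mu}(n)|^2 = \infty$ as required.

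The main obstacle is the uniqueness step. As stated, Lemma $\ref{aryt}$ fixes one $c_j$ from each allowed set and deduces uniqueness only in the signs $b_j$; in our application we must let the $c_j$ (i.e.\ the $l_k$) vary simultaneously across indices. This strengthened uniqueness is precisely what the second growth condition $m_k > 2\sum_{j < k}((2^{n_j}-1)m_j + r_j)$ is designed to provide: it ensures that different choices of $l_k$ within the same $w_k$ are separated by more than the total diameter of the lower-index frequency set, so no collision across choices is possible. If needed I would restate Lemma $\ref{aryt}$ in the sharper form in which both the signs $b_j$ and the values $c_j$ are recovered uniquely from $s$, the proof being a straightforward backward induction on the largest index in $S$ that uses exactly the lacunarity expressed in $(\ref{zal})$.
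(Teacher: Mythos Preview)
Your argument is correct. The strengthened uniqueness you need does follow from the two lacunarity conditions in $(\ref{zal})$ exactly as you outline: if two representations share the same largest index $K$, the condition on $r_K$ forces the signs at $K$ to match and the condition on $m_K$ forces the values $l_K$ to match, so one peels off the top index and recurses. With that in hand, $\widehat{\mu}(n)=-\widehat{w_k}(n)$ on $\mathrm{supp}\,\widehat{w_k}$, these supports are disjoint, and the divergence follows.

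Your route differs from the paper's. The paper does not identify individual Fourier coefficients of $\mu$; instead it bounds $\sum_n|\widehat{\mu}(n)|^2$ from below by $\|f_N\|_{L^2}^2$ and computes that integral directly: expanding $\prod_{k\le N}(1-w_k)^2=\prod_{k\le N}(1-2w_k+w_k^2)$ and using the lacunarity $(\ref{zal})$ to kill every cross term containing a first power of some $w_k$, one arrives at $\|f_N\|_{L^2}^2=\int\prod_{k\le N}(1+w_k^2)\,dt\ge 1+\sum_{k\le N}\|w_k\|_{L^2}^2\ge 1+Nc$. Both approaches feed on the same combinatorial input and land on the same lower bound $\sum_k\|w_k\|_{L^2}^2$; yours is a bit more transparent because it pins down exactly which frequencies of $\mu$ carry the mass, while the paper's integral computation avoids having to state and prove the sharpened uniqueness lemma explicitly.
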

\begin{proof}
The above assumptions  are sufficient to prove the existence of a
measure $\mu$ as announced in Proposition $\ref{kons}$. Let us
fix $N\in\mathbb{N}$ and consider the polynomial
\begin{equation*}
f_{N}(t)=\prod_{k=1}^{N}(1-w_{k}(t)).
\end{equation*}
An easy application of Parseval's identity gives
\begin{equation*}
\sum_{k=-\infty}^{\infty}|\widehat{\mu}(k)|^{2}\geq\sum_{k=-\infty}^{\infty}|\widehat{f_{N}}(k)|^{2}=||f_{N}||_{L^{2}(\mathbb{T})}^{2}.
\end{equation*}
Hence it is enough to show that  $||f_{N}||_{L^{2}(\mathbb{T})}^{2}\rightarrow\infty$ as $N\rightarrow\infty$. We proceed with the following calculation (we use the normalized Lebesgue measure on interval $[0,2\pi]$).
\begin{equation*}
||f_{N}||_{L^{2}(\mathbb{T})}^{2}=\int\prod_{k=1}^{N}(1-w_{k}(t))^{2}dt=\int\prod_{k=1}^{N}(1-2w_{k}(t)+w^{2}_{k}(t))dt.
\end{equation*}
Expanding the last product we get the terms which are the multiples of the expressions
\begin{equation*}
\pm\int w_{i_{1}}^{l_{1}}(t)\cdot w_{i_{2}}^{l_{2}}(t)\cdot\ldots\cdot w_{i_{m}}^{l_{m}}(t)dt=\pm\int h(t)dt
\end{equation*}
where $1\leq m\leq N$, $i_{1}<i_{2}<\ldots<i_{m}$ and
$l_{1},l_{2},\ldots,l_{m}\in\{1,2\}$. A simple arithmetic argument
based on equation $(\ref{zal})$ shows that the integral equals $0$
unless $l_{1}=l_{2}=\ldots=l_{m}=2$. Indeed, it is equal to
$\widehat{h}(0)$ and to prove this assertion let us assume on the
contrary $l_{s}=1$ for some $1\leq s\leq m$. If we have
$\widehat{h}(0)\neq 0$, then there exist integers
$j_{1},j_{1}',j_{2},j_{2}',\ldots,j_{m},j_{m}'$ satisfying
$j_{d},j_{d}'\in\{-2^{n_{i_{d}}}+1,\ldots,2^{n_{i_{d}}}-1\}\setminus\{0\}$
for $d=1,2,\ldots,m$ with exception of $d=s$ for which $j_{s}$
belongs to the same set of integers but $j_{s}'=0$, such that
\begin{equation*}
0=\sum_{k=1}^{m}(m_{k}(j_{k}+j_{k}')+r_{k}(\mathrm{sgn}(j_{k})+\mathrm{sgn}(j_{k}'))).
\end{equation*}
Equation $(\ref{zal})$ implies that this is possible if, and only
if, $j_{k}+j_{k}'=0$ for all $k$. However, this situation is
excluded by the assumption $l_{s}=1$, which leads to the vanishing
of number $j_{s}'$. Putting all this information together we
obtain
\begin{equation*}
||f_{N}||_{L^{2}(\mathbb{T})}^{2}=\int\prod_{k=1}^{N}(1-w_{k}(t))^{2}dt=\int\prod_{k=1}^{N}(1+w_{k}^{2}(t))dt.
\end{equation*}
Forgetting the terms with an order higher than two we have, by Proposition $\ref{postpr}$,
\begin{equation*}
\int\prod_{k=1}^{N}(1+w_{k}^{2}(t))dt\geq 1+\sum_{k=1}^{N}\int w_{k}^{2}(t)dt=1+\sum_{k=1}^{N}2\varepsilon_{k}^{2}(2^{n_{k}}-1)).
\end{equation*}
Using the assumption $(\ref{zal1})$ we finally get
\begin{equation*}
||f_{N}||^{2}_{L^{2}(\mathbb{T})}\geq 1+Nc\rightarrow\infty\text{ as }N\rightarrow\infty.
\end{equation*}
\end{proof}
The main result of this section states that under additional assumptions
on the sequences $\{r_{k}\}_{k=1}^{\infty}$, $\{m_{k}\}_{k=1}^{\infty}$ and
$\{n_{k}\}_{k=1}^{\infty}$ the resulting measure is singular. We also show how to satisfy these assumptions.
\begin{tw}\label{przy}
Let $\{\varepsilon_{k}\}_{k=1}^{\infty}$ be a decreasing sequence of
positive numbers vanishing at infinity such that
$\varepsilon_{k+1}<\frac{1}{2}\varepsilon_{k}$ for all
$k\in\mathbb{N}$. Then there exist sequences
$\{r_{k}\}_{k=1}^{\infty}$, $\{m_{k}\}_{k=1}^{\infty}$,
$\{n_{k}\}_{k=1}^{\infty}$ of positive integers satisfying the
conditions
\begin{equation}
\begin{split}\label{tzal}
&r_{k}>2\sum_{j=1}^{k-1}((2^{n_{j}}-1)m_{j}+r_{j})\text{ for }k\geq 2,\\
&m_{k}>2\sum_{j=1}^{k-1}((2^{n_{j}}-1)m_{j}+r_{j})\text{ for }k\geq 2,\\
&\frac{1}{2}<\varepsilon_{k}2^{\frac{n_{k}+3}{2}}<1.
\end{split}
\end{equation}
such that the positive measure $\mu\in M_{0}(\mathbb{T})$ with norm
$1$
\begin{equation*}
\mu=\prod_{k=1}^{\infty}(1-w_{k})
\end{equation*}
is singular and satisfies
\begin{equation*}
\widehat{\mu}(\mathbb{Z})\subset\{\pm\prod_{k=1}^{m}
\varepsilon_{k}^{l_{k}}:l_{k}\in\{0,1\},m\in\mathbb{N}\}\cup\{0\}.
\end{equation*}
\end{tw}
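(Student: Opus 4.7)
The plan is to assemble the three conditions of (\ref{tzal}) by induction, apply Propositions~\ref{kons} and~\ref{ly} to produce a measure $\mu\in M_0(\mathbb{T})$ with $\|\mu\|=1$ and $\mu\notin L^2(\mathbb{T})$, and then upgrade the latter to outright singularity by a Kakutani--Peyri\`ere style dichotomy for lacunary generalised Riesz products.

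First I pick $\{n_k\}$. The third condition of (\ref{tzal}) is equivalent to
$n_k\in\bigl(2\log_2\tfrac{1}{2\varepsilon_k}-3,\,2\log_2\tfrac{1}{\varepsilon_k}-3\bigr)$,
an open interval of length $2$, hence containing an integer. The assumption $\varepsilon_{k+1}<\tfrac12\varepsilon_k$ places the left endpoint of the window for $n_{k+1}$ strictly above the right endpoint of the window for $n_k$, so any admissible choice yields a strictly increasing sequence. With $\{n_k\}$ fixed, the first two inequalities of (\ref{tzal}) become explicit lower bounds on $m_k$ and $r_k$ depending only on previously chosen data; they are met by taking both sequences strictly increasing and sufficiently large.

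Next, the upper half of condition 3 is exactly the smallness hypothesis $\varepsilon_k 2^{(n_k+3)/2}<1$ of Proposition~\ref{kons}, and the first inequality of (\ref{tzal}) is its lacunarity hypothesis. Thus Proposition~\ref{kons} delivers the positive, norm-one measure $\mu=\prod_{k=1}^{\infty}(1-w_k)\in M_0(\mathbb{T})$ with $\widehat{\mu}(\mathbb{Z})$ contained in the claimed set. Moreover, the lower half of condition 3 gives $\varepsilon_k^2 2^{n_k}>\tfrac{1}{32}$, hence $\varepsilon_k^2(2^{n_k+1}-1)\geq c$ uniformly in $k$ for some $c>0$. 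This is precisely hypothesis (\ref{zal1}), and Proposition~\ref{ly} yields $\mu\notin L^2(\mathbb{T})$.

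The heart of the argument --- and the main obstacle --- is passing from $\mu\notin L^2$ to $\mu\perp\lambda$: non-membership in $L^2$ is strictly weaker than singularity, since an $L^1\setminus L^2$ density is still absolutely continuous. The plan is to invoke the dichotomy that for an infinite product $\prod(1+P_k)$ built from real, mean-zero trigonometric polynomials with $\|P_k\|_\infty<1$ whose Fourier supports are lacunarily spaced in a tree structure, the weak-$\ast$ limit is either absolutely continuous or singular with respect to Lebesgue measure, the two cases being separated by the convergence versus divergence of $\sum\int P_k^2\,dt$. The polynomials $w_k$ meet all the prerequisites: real-valuedness and $\int w_k\,dt=0$ by Proposition~\ref{postpr} and the explicit expansion (\ref{post}); $\|w_k\|_\infty<1$ by the upper half of condition 3 and Proposition~\ref{postpr}; the required tree lacunarity by (\ref{tzal}). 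Since $\sum\int w_k^2\,dt=\infty$, the dichotomy forces $\mu\perp\lambda$. A self-contained derivation goes via the Hellinger affinity: the strong lacunarity (\ref{tzal}) ensures that $\int\prod_{k\in S}w_k^{\alpha_k}\,dt$ factorises as $\prod_{k\in S}\int w_k^{\alpha_k}\,dt$ for all relevant $S$ and $\alpha_k$ --- the very computation that appears in the proof of Proposition~\ref{ly}. Expanding $\sqrt{1-w_k}$ in Taylor series and using $\int w_k\,dt=0$ together with this factorisation, one dominates
\begin{equation*}
\int\sqrt{f_N}\,dt\;\leq\;\prod_{k=1}^{N}\Bigl(1-c'\int w_k^2\,dt\Bigr)\;\longrightarrow\;0,
\end{equation*}
which is well known to imply $\mu\perp\lambda$. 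The delicate point is a careful bookkeeping of the Taylor remainder: condition 3 keeps $\|w_k\|_\infty$ bounded away from $1$ but also away from $0$, so the absorption of the higher-order terms into $c'\int w_k^2$ must be done uniformly in $k$.
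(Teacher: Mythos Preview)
Your construction of $\{n_k\},\{r_k\},\{m_k\}$ and the invocation of Propositions~\ref{kons} and~\ref{ly} match the paper. The divergence is entirely in the singularity argument, and there you take a genuinely different route from the authors.

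The paper does \emph{not} use the Hellinger integral. Instead it builds explicit test polynomials
\[
f_{k}(t)=\sum_{l=l_{k}+1}^{l_{k+1}}c_{l}\sum_{n\in A_{l}}\mathrm{sgn}\,\widehat{\mu}(n)\,e^{int},
\]
with $A_l=\{r_l,r_l+m_l,\ldots,r_l+(2^{n_l}-1)m_l\}$ and suitably chosen $c_l\in\ell^2$, and shows directly that $\|f_k\|_{L^2(\lambda)}\to 0$ while $\|f_k-1\|_{L^1(\mu)}\to 0$. Extracting a common a.e.\ subsequence gives a Borel set of full Lebesgue measure on which the pointwise limit is $0$, yet $\mu$-a.e.\ the limit is $1$; hence $\mu\perp\lambda$. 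This Brown--Moran style argument needs only the $\alpha_k\le 2$ orthogonality already used in Proposition~\ref{ly}, and it bypasses entirely the passage ``$\int\sqrt{f_N}\to 0\Rightarrow\mu\perp\lambda$'', which in the generalized-Riesz-product setting is not automatic from weak-$\ast$ convergence alone and requires a separate martingale/dissociation argument.

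Your Hellinger route can be made to work, but the sketch as written has a real gap. The lacunarity in~(\ref{tzal}) carries only a factor~$2$: it yields
\[
\int\prod_{k}w_k^{\alpha_k}\,dt=\prod_k\int w_k^{\alpha_k}\,dt
\]
only for $\alpha_k\in\{0,1,2\}$ (and even that uses \emph{both} growth conditions, on $r_k$ and on $m_k$, to rule out the small nonzero frequencies in $\widehat{w_k^2}$ of size $m_k$). It does \emph{not} factorise higher powers, so ``expanding $\sqrt{1-w_k}$ in Taylor series and using this factorisation'' fails as stated. The remedy is to replace the Taylor expansion by the pointwise inequality
\[
\sqrt{1-x}\;\le\;1-\tfrac{x}{2}-c'x^{2}\qquad(|x|<1),
\]
valid with $c'=\tfrac{1}{16\sqrt{2}}$, after which only $\alpha_k\le 2$ occurs and the factorisation is legitimate; one then gets $\int\sqrt{f_N}\le\prod_k(1-c'\|w_k\|_2^2)\to 0$ as you want. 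Note also that condition~(\ref{tzal}) does \emph{not} bound $\|w_k\|_\infty$ away from~$1$ as you assert---$\varepsilon_k 2^{(n_k+3)/2}$ may approach~$1$---but the inequality above holds uniformly on $(-1,1)$, so this is harmless. With these repairs your argument goes through; the paper's test-function approach has the advantage of being fully self-contained.
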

\begin{proof}
We show first how to choose the sequence $\{n_{k}\}_{k=1}^{\infty}$.
We define  $n_k$ as the smallest integer satisfying
\begin{equation*}
2\log_{2}\frac{1}{\varepsilon_{k}}-5<n_{k}<2\log_{2}\frac{1}{\varepsilon_{k}}-3.
\end{equation*}
To satisfy $n_{k+1}>n_{k}$ it is enough to have $2\log_{2}\frac{1}{\varepsilon_{k}}-3<2\log_{2}\frac{1}{\varepsilon_{k+1}}-5$ which is equivalent to $\varepsilon_{k+1}<\frac{1}{2}\varepsilon_k$. Now we define the sequences $\{r_{k}\}_{k=1}^{\infty}$ and $\{m_{k}\}_{k=1}^{\infty}$. We put $m_{1}=r_{1}=1$ and then we choose inductively sequences
\begin{equation*}
\begin{split}
r_{k}>2\sum_{j=1}^{k-1}((2^{n_{j}}-1)m_{j}+r_{j})\text{ for }k\geq 2,\\
m_{k}>2\sum_{j=1}^{k-1}((2^{n_{j}}-1)m_{j}+r_{j})\text{ for }k\geq 2.
\end{split}
\end{equation*}
We are now in the position to apply Proposition $\ref{ly}$ and we derive that
the measure $\mu$ exists and does not belong to
$L^{2}(\mathbb{T})$ (in fact, the  assumption
$2^{n_{k}}\varepsilon_{k}^{2}>\frac{1}{32}$ leads to this result
without referring to Proposition $\ref{ly}$). The proof of the singularity
follows essentially the argument given in [BM] and [GM] for the Riesz
products.
\\
We obviously have
\begin{equation*}
\sum_{k=1}^{\infty}2^{n_{k}}\varepsilon_{k}=\infty.
\end{equation*}
Hence, we may choose a sequence $\{c_{k}\}_{k=1}^{\infty}\in
l^{2}(\mathbb{N})$ of real numbers and an increasing sequence
$\{l_{k}\}_{k=1}^{\infty}$ of integers such that
\begin{equation}\label{sum}
\sum_{l=l_{k}+1}^{l_{k+1}}c_{l}2^{n_{l}}\varepsilon_{l}=1\text{
for all $k\in\mathbb{N}$}
\end{equation}
with the additional property
\begin{equation}\label{zb1}
\sum_{l=1}^{\infty}c_{l}^{2}2^{n_{l}}<\infty.
\end{equation}
We introduce sets $A_{l}\subset\mathbb{N}$ for $l\in\mathbb{N}$
\begin{equation*}
A_{l}=\{r_{l},r_{l}+m_{l},r_{l}+2m_{l},\ldots,r_{l}+(2^{n_{l}}-1)m_{l}\}.
\end{equation*}
Clearly, we have $A_{l}\cap A_{k}=\emptyset$ for $l\neq k$ and
$|A_{l}|=2^{n_{l}}$. Moreover, we get
$\widehat{\mu}(n)=\mathrm{sgn}\widehat{\mu}(n)\varepsilon_{l}$ for
$n\in A_{l}$. Let us consider polynomials $f_{k}$ for
$k\in\mathbb{N}$ defined by the formula
\begin{equation}\label{wiel}
f_{k}(t)=\sum_{l=l_{k}+1}^{l_{k+1}}c_{l}\sum_{n\in
A_{l}}\mathrm{sgn}\widehat{\mu}(n)e^{int}.
\end{equation}
By  $(\ref{zb1})$ we have
\begin{equation}\label{zer}
||f_{k}||_{L^{2}(\mathbb{T})}^{2}=\sum_{l=l_{k}+1}^{l_{k+1}}2^{n_{l}}c_{l}^{2}\rightarrow
0\text{ as }k\rightarrow\infty.
\end{equation}
We now perform the crucial calculation of
$||f_{k}||^{2}_{L^{2}(\mathbb{T},\mu)}$.
\begin{equation*}
\begin{split}
&||f_{k}||^{2}_{L^{2}(\mathbb{T},\mu)}=\int
f_{k}(t)\overline{f_{k}(t)}d\mu=\\
&\sum_{l}2^{n_{l}}c_{l}^{2}+\sum_{l}c_{l}^{2}\sum_{\substack{n,m\in
A_{l}\\n\neq
m}}\mathrm{sgn}\widehat{\mu}(n)\mathrm{sgn}\widehat{\mu}(m)\widehat{\mu}(m-n)+\sum_{\substack{l,r\\l\neq
r}}c_{l}2^{n_{l}}\varepsilon_{l}c_{r}2^{n_{r}}\varepsilon_{r}.
\end{split}
\end{equation*}
However, $\widehat{\mu}(m-n)=0$ for $m,n\in A_{l}$, $n\neq m$
(this is true by $(\ref{tzal})$ and the construction of $\mu$ in
Proposition \ref{kons}) and the second sum vanishes. Simple
manipulations of the remaining terms give
\begin{equation*}
\begin{split}
\sum_{l}2^{n_{l}}c_{l}^{2}+&\sum_{\substack{l,r\\l\neq
r}}c_{l}2^{n_{l}}\varepsilon_{l}c_{r}2^{n_{r}}\varepsilon_{r}=\\
&\sum_{l}2^{n_{l}}c_{l}^{2}+\left(\sum_{l}2^{n_{l}}\varepsilon_{l}c_{l}\right)^{2}-\sum_{l}c_{l}^{2}2^{2{n_{l}}}\varepsilon_{l}^{2}.
\end{split}
\end{equation*}
By $(\ref{sum})$, the second term equals  $1$ and
\begin{equation*}
||f_{k}||^{2}_{L^{2}(\mathbb{T},\mu)}=1+\sum_{l=l_{k}+1}^{l_{k+1}}c_{l}^{2}2^{n_{l}}(1-2^{n_{l}}\varepsilon_{l}^{2}).
\end{equation*}
The assumption $2^{n_{l}}\varepsilon_{l}^{2}>\frac{1}{32}$ leads to
$1-2^{n_{l}}\varepsilon_{l}^{2}<\frac{31}{32}$  which, with aid of
$(\ref{zb1})$, gives
\begin{equation*}
\sum_{l=l_{k}+1}^{l_{k+1}}c_{l}^{2}2^{n_{l}}(1-2^{n_{l}}\varepsilon_{l}^{2})\rightarrow
0\text{ as }k\rightarrow\infty.
\end{equation*}
Hence
\begin{equation}\label{jed}
||f_{k}||_{L^{2}(\mathbb{T},\mu)}\rightarrow 1\text{ as
}k\rightarrow\infty.
\end{equation}
We shall now show that
\begin{equation}\label{l1}
\lim_{k\rightarrow\infty}\|f_{k}-1\|_{L^{1}(\mathbb{T},\mu).}=0
\end{equation}
Applying the Schwarz inequality we get
\begin{equation*}
\begin{split}
\left(\int|f_{k}-1|d\mu\right)^{2}\leq\int|f_{k}-1|^{2}d\mu=\int|f_{k}|^{2}d\mu-2Re\int
f_{k}d\mu+1=\\
=\int
|f_{k}|^{2}d\mu-2Re\sum_{l=l_{k}+1}^{l_{k+1}}c_{l}2^{n_{l}}\varepsilon_{l}+1\rightarrow
0\text{ as }k\rightarrow\infty.
\end{split}
\end{equation*}
The last assertion follows from $(\ref{sum})$ and $(\ref{jed})$. This
proves $(\ref{l1})$. Finally, by ($\ref{zer}$) and ($\ref{l1}$), we can find
a subsequence $(f_{k_j})$ and a Borel set
$F\subset \mathbb{T}$
of full Lebesgue measure and $\mu(F)=1$ such that

\begin{equation*}
\begin{split}
&\lim_{j\rightarrow\infty}f_{k_{j}}(t)=0\text{  for $t\in F$},\\
&\lim_{j\rightarrow\infty}f_{k_{j}}=1\text{ $\mu$ -a.e. on $ F$ }.
\end{split}
\end{equation*}
Both the normalized Lebesgue measure and
$\mu$ are positive and have norm $1$. Hence we obtain that $\mu$
is singular with respect to the Lebesgue measure which finishes the
proof.
\end{proof}

\section{Final remarks}

\begin{enumerate}
\item The components of the open set $U$ constructed in Theorem 1 tend to 0 very fast. A closer look at the proof shows that the reciprocal of the distance of $n$-th component to 0  grows as fast as the Ackermann function $A(4,2n)$, which is very fast. It is not easy to see in which places of the proof this growth could be optimalized.
  \item  Our proof gives that the continuous part of a measure satisfying the assumptions of Theorem 1 has an absolutely continuous convolution square. We do not know, however, whether this could be improved. In particular it would be interesting to show an example of an open set $U_1$ which yields the continuous measure to be absolutely continuous if it has all its Fourier coefficients in $U_1$.
\item It would also be interesting to construct an open set $U_k$
with the property that any function with Fourier coefficients from
$U_k$ has only $k$th convolution power absolutely continuous, and
such that there exists a measure with Fourier coefficients in
$U_k$ with all smaller convolution powers singular.
  \item The above property uses the fact that the sum of measures with Fourier coefficients
tending to 0 whose  convolution powers are absolutely continuous belongs to the Zafran class $\mathscr C$. We also conjecture the converse property holds true, i.e.  any measure from $\mathscr C$ could be decomposed onto an (infinite) sum of measures whose convolution powers are absolutely continuous.
  \item The crucial element in our proof was the use of the Littlewood conjecture to estimate the number of
   repetitions of any specific value taken by the Fourier coefficients. In the case of the Cantor group, the Littlewood conjecture does not hold. This fact encourages us to ask whether any
infinite Wiener - Pitt set exist for the convolution measure
algebra on the Cantor group.
\item Our Lemma 8 is a stronger
version of the second part of Theorem 15 which is taken from [GM].
Our proof uses the exact version of the Littlewood conjecture,
which was not available when [GM] was written. But our proof
differs in more aspects - it uses Bożejko - Pełczyński's invariant
local approximation property which seems to be a simpler method.
\item While Lemma 8 does not hold for torsion abelian groups,
because the Littlewood conjecture is false there, it seems likely
that Lemma 9 may be extended to this case but it would require
completely different proof.
\end{enumerate}

The authors express their sincere gratitude to the reviewer whose
thorough remarks and comments contributed significantly to the
current shape of the paper. This concerns especially the example
which was constructed under the influence of his suggestion.
\\
The research of Przemysław Ohrysko has been supported by the
Polish Ministry of Science grant no. N N201 397737 (years
2009-2012).
\\
The research of Michał Wojciechowski has been supported by NCN
grant no. N N201 607840.

\end{document}